\newcounter{mtheorem}
\newtheorem{theorem}{Theorem}[section]
\newtheorem{lemma}[theorem]{Lemma}
\newtheorem{prop}[theorem]{Proposition}
\newtheorem{corollary}[theorem]{Corollary}
\theoremstyle{definition}
\newtheorem{definition}[theorem]{Definition}
\newtheorem{example}[theorem]{Example}
\newtheorem{decomp}{Decomposition}
\theoremstyle{remark}
\newtheorem{remark}[theorem]{Remark}
\numberwithin{equation}{section}
\newcommand{\E}{\mathcal{E}}
\newcommand{\cone}{\mathcal{C}}
\newcommand{\unitary}[1]{\textrm{U({#1})}}
\newcommand{\sunitary}[1]{\textrm{SU({#1})}}
\newcommand{\sorth}[1]{\textrm{SO({#1})}}
\newcommand{\C}{\mathbb{C}}
\newcommand{\R}{\mathbb{R}}
\newcommand{\N}{\mathbb{N}}
\newcommand{\Sph}{\mathbb{S}}
\newcommand{\Imag}{\operatorname{Im}}
\newcommand{\Real}{\operatorname{Re}}
\newcommand{\tnabla}{{\widetilde{\nabla}}}
\newcommand{\tg}{{\tilde{g}}}
\newcommand{\tD}{\widetilde{D}}
\newcommand{\tGamma}{{\widetilde{\Gamma}}}
\title[SL conifolds, I]{Special Lagrangian conifolds, I: Moduli spaces \linebreak(extended version)}
\author[T.~Pacini]{Tommaso~Pacini}
\address{Scuola Normale Superiore, Pisa} \email{tommaso.pacini@sns.it}
\date{\today}
\subjclass[2010]{Primary 53C38; Secondary 58Dxx}
\begin{document}
\begin{abstract}
This is the extended version of the paper \cite{pacini:sldefs}, which discusses the deformation theory of special Lagrangian (SL) conifolds in $\C^m$. Conifolds are a key ingredient in the compactification problem for moduli spaces of compact SLs in Calabi-Yau manifolds. The conifold category
allows for the simultaneous presence of conical singularities and of non-compact, asymptotically conical, ends. 

Our main theorem is the natural next step in the chain of results
initiated by McLean \cite{mclean} and continued by the author \cite{pacini:defs}
and Joyce \cite{joyce:II}. We survey all these results, providing a unified framework for studying the
various 
cases and emphasizing analogies and differences. Compared to \cite{pacini:sldefs}, this paper contains more detail but the same results. The paper also lays down the geometric foundations for
our paper \cite{pacini:slgluing} concerning gluing constructions for SL conifolds in $\C^m$. 
\end{abstract}
\maketitle
\tableofcontents
\section{Introduction}\label{s:intro}
Let $M$ be a Calabi-Yau (CY) manifold. Roughly speaking, a submanifold
$L\subset M$ is \textit{special Lagrangian} (SL) if it is both minimal and
Lagrangian with respect to the ambient Riemannian and symplectic structures. 

From the point of view of Riemannian Geometry it is of course natural to focus
on the minimality condition. It turns out that SLs are automatically
volume-minimizing in their homology class. In fact, this was Harvey and Lawson's
main motivation for defining and studying SLs within the general context of
Calibrated Geometry \cite{harveylawson}. This is still the most common point of
view on SLs and leads to emphasizing the role of analytic and Geometric Measure
Theory techniques. It also provides a connection with various classical problems
in Analysis such as the Plateau problem and the study of area-minimizing cones.
In many ways it is the point of view adopted here.

From the point of view of Symplectic Geometry it is instead natural to focus on
the Lagrangian condition. Specifically, SLs are examples of Maslov-zero
Lagrangian submanifolds. This leads to emphasizing the role of Symplectic
Topology techniques, both classical (such as the h-principle and moment maps)
and contemporary (such as Floer homology). An early instance of this point of
view is the work of Audin \cite{audin}; it also permeates the paper
\cite{haskinspacini} by Haskins and the author.

Given this richness of ingredients it is perhaps not surprising that SLs are
conjectured to play an important role in Mirror Symmetry \cite{kontsevich},
\cite{syz} and to produce interesting new invariants of CY manifolds
\cite{joyce:3spheres}. Likewise, and more intrinsically, they also tend to
exhibit other nice technical features. In particular it is by now well
understood that SLs often generate smooth, finite-dimensional, moduli spaces.
This SL deformation problem has been studied by a number of authors under
various topological and geometric assumptions. One clear path is the chain of
results initiated by McLean \cite{mclean}, who studied deformations of smooth
compact SLs; continued by the author \cite{pacini:defs} and Marshall
\cite{marshall}, who adapted that set-up to study certain smooth non-compact
(\textit{asymptotically conical}, AC) SLs; and further advanced by Joyce, who
presented analogous results for compact \textit{conically singular} (CS) SLs
\cite{joyce:II}. 

The above three classes of SLs are intimately linked, as follows. One of the main open questions in SL geometry is how to compactify McLean's moduli spaces. This problem is currently one of the biggest obstructions to progress on the above conjectures. Roughly speaking, compactifying the moduli space requires adding to it a ``boundary'' containing singular compact SLs. By definition, CS SLs have isolated singularities modelled on SL cones in $\C^m$:  they would be the simplest objects appearing in this boundary. If a CS SL appears in the boundary, it must be a limit of a 1-parameter family of smooth compact SLs. These smooth SLs can be recovered via a gluing construction which desingularizes the CS SL: (i) each singularity of the CS SL defines a SL cone in $\C^m$; (ii) each of these cones must admit a 1-parameter family of SL desingularizations, \textit{i.e.} AC SLs in $\C^m$ converging to the cone as the parameter $t$ tends to $0$; (iii) the family of smooth SLs is obtained by gluing the AC SLs into a neighbourhood of the singularities of the CS SL. This picture is made precise by Joyce's gluing results \cite{joyce:III}, \cite{joyce:IV},
\cite{joyce:V}. Section 8 of \cite{joyce:V} then shows that, in some cases and near the boundary, 
the compactified moduli space can be locally written as a product of moduli
spaces of AC and CS SLs.

The above classes of submanifolds are special cases within the broader
category of \textit{Riemannian conifolds}, which includes manifolds
exhibiting both AC and CS ends. In other words, it allows CS SLs to become non-compact by allowing the presence of AC ends. This is of fundamental importance for the construction of SLs in $\C^m$: it is well-known that $\C^m$ does not admit any compact (smooth or singular) volume-minimizing submanifolds. Cones in $\C^m$ with an isolated singularity at
the origin are the simplest example of conifold: the construction of new examples and the study of their properties is currently one of the most active areas of SL research \cite{harveylawson},
\cite{haskins}, \cite{haskinskapouleas}, \cite{haskinspacini}, \cite{joyce:symmetries}, \cite{ohnita}. Conifolds provide the appropriate framework in which to extend all the above research. In particular, they might also substitute AC SLs in Joyce's gluing results: one could try to cut out a conical singularity of the CS SL and replace it with a different singular conifold, thus jumping from one area of the boundary of the compactified moduli space, containing certain CS SLs, to another. 

The paper at hand is Part I of a multi-step project aiming to set up a general theory of SL conifolds. Two other papers related to this project are currently available: \cite{pacini:weighted}, \cite{pacini:slgluing} (see also \cite{pacini:sldefs}). Further work is in progress. The goal of this paper is to provide a general deformation theory of SL conifolds in $\C^m$. The best set-up for the SL deformation
problem is the one provided by Joyce \cite{joyce:II}. It is based on his
Lagrangian neighbourhood and regularity theorems \cite{joyce:I}. Joyce's
framework has two benefits: (i) it simplifies the Analysis via a reduction from
the semi-elliptic operator $d\oplus d^*$ on 1-forms to the
elliptic Laplace operator on functions, (ii) it nicely emphasizes the separate
contributions to the dimension of $\mathcal{M}_L$ coming from the topological
and from the analytic components. Along with the main result Theorem
\ref{thm:accssl} concerning moduli spaces of CS/AC SL submanifolds in $\C^m$, we thus
present new proofs of the previously-known results, emphasizing this point of view. 
In this sense, this paper also serves the purpose of surveying and unifying those results. More importantly, it lays down the geometric foundations for \cite{pacini:slgluing}; the analytic foundations are provided by \cite{pacini:weighted}.

We now summarize the contents of this paper. Section \ref{s:geometry_review}
introduces the category of $m$-dimensional Riemannian conifolds. The main
definitions are standard but Section \ref{ss:closedforms} contains an
investigation into the structure of various spaces of closed 1-forms on these
manifolds. This is a fundamental component of the Lagrangian and SL deformation
theory. The corresponding notion of ``subconifolds'' is presented in Section
\ref{s:lagconifolds}, leading to the concept of \textit{Lagrangian conifolds}.
Deformation theory begins in Section \ref{s:lagdefs}. From various points of
view it seems most satisfying to begin with the general (infinite-dimensional)
theory of Lagrangian deformations. This is presented as a direct consequence of
Joyce's Lagrangian neighbourhood theorems, coupled with the material of Section
\ref{ss:closedforms}. The case of Lagrangian cones is studied in particular
detail in Section \ref{ss:conelagdefs} as it provides the backbone for all other
cases. After presenting the necessary definitions in Section \ref{s:slgeometry},
the analogous framework for deforming SL conifolds is developed in Section
\ref{s:setup}. With the aim of making this paper reasonably self-contained,
Section \ref{s:reviewlaplace} summarizes from \cite{pacini:weighted} some
results concerning harmonic functions on conifolds. The SL deformation theory is
then completed in Section \ref{s:moduli}. The proofs rely upon a fair amount of
analytic machinery: weighted Sobolev spaces, embedding theorems and the theory
of elliptic operators on conifolds. Full details are provided in \cite{pacini:weighted}.

\ 

\textbf{Important remark:} To simplify certain arguments, throughout this paper
we assume $m\geq 3$.

%%%%%%%%%%%%%%%%%%%%%%%%%%%%%%%%%%%%%%%%%%%%%%%%%%%%%%%%%%%%
%%%%%%%%%%%%%%%%%%%%%%%%%%%%%%%%%%%%%%%%%%%%%%%%%%%%%%%%%%%%

\section{Geometry of conifolds}\label{s:geometry_review}

\subsection{Asymptotically conical and conically singular
manifolds}\label{ss:accs_review}

We introduce here the categories of differentiable and Riemannian manifolds
mainly relevant to this paper, referring to \cite{pacini:weighted} for further
details. Following \cite{joyce:I}, however, we introduce a small variation of
the notion of ``conically singular" manifolds:  presenting them in terms of the
compactification $\bar{L}$ will allow us to keep track of the singular points
$x_i$. This plays no role in this section but in Section \ref{s:lagdefs} it will
become very useful.

\begin{definition}\label{def:manifold_ends}
Let $L^m$ be a smooth manifold. We say $L$ is a \textit{manifold with ends} if
it satisfies the following conditions:
\begin{enumerate}
\item We are given a compact subset $K\subset L$ such that $S:=L\setminus K$ has
a finite number of connected components $S_1,\dots,S_e$, \textit{i.e.}
$S=\amalg_{i=1}^e S_i$.
\item For each $S_i$ we are given a connected ($m-1$)-dimensional compact
manifold $\Sigma_i$ without boundary. 
\item There exist diffeomorphisms $\phi_i:\Sigma_i\times [1,\infty)\rightarrow
\overline{S_i}$.
\end{enumerate}
We then call the components $S_i$ the \textit{ends} of $L$ and the manifolds
$\Sigma_i$ the \textit{links} of $L$. We denote by $S$ the union of the ends and
by $\Sigma$ the union of the links of $L$. 
\end{definition}

\begin{definition}\label{def:metrics_ends}
Let L be a manifold with ends. Let $g$ be a Riemannian metric on $L$. Choose an
end $S_i$ with corresponding link $\Sigma_i$.

We say that $S_i$ is a \textit{conically singular} (CS) end if the following
conditions hold:
\begin{enumerate}
\item $\Sigma_i$ is endowed with a Riemannian metric $g_i'$.

We then let $(\theta,r)$ denote the generic point on the product manifold
$C_i:=\Sigma_i\times (0,\infty)$ and $\tg_i:=dr^2+r^2g_i'$ denote the
corresponding \textit{conical metric} on $C_i$.
\item There exist a constant $\nu_i>0$ and a diffeomorphism
$\phi_i:\Sigma_i\times (0,\epsilon]\rightarrow \overline{S_i}$ such that, as $r\rightarrow
0$ and for all $k\geq 0$,
$$|\tnabla^k(\phi_i^*g-\tg_i)|_{\tg_i}=O(r^{\nu_i-k}),$$
where $\tnabla$ is the Levi-Civita connection on $C_i$ defined by $\tg_i$. 
\end{enumerate}
We say that $S_i$ is an \textit{asymptotically conical} (AC) end if the
following conditions hold:
\begin{enumerate}
\item $\Sigma_i$ is endowed with a Riemannian metric $g_i'$.

We again let $(\theta,r)$ denote the generic point on the product manifold
$C_i:=\Sigma_i\times (0,\infty)$ and $\tg_i:=dr^2+r^2g_i'$ denote the
corresponding conical metric on $C_i$.
\item There exist a constant $\nu_i<0$ and a diffeomorphism
$\phi_i:\Sigma_i\times [R,\infty)\rightarrow \overline{S_i}$ such that, as $r\rightarrow
\infty$ and for all $k\geq 0$,
$$|\tnabla^k(\phi_i^*g-\tg_i)|_{\tg_i}=O(r^{\nu_i-k}),$$
where $\tnabla$ is the Levi-Civita connection on $C_i$ defined by $\tg_i$.
\end{enumerate}
In either of the above situations we call $\nu_i$ the \textit{convergence rate}
of $S_i$.
\end{definition}

We refer to \cite{pacini:weighted} Section 6 for a better understanding of the asymptotic
conditions introduced in Definition \ref{def:metrics_ends}.

\begin{definition}\label{def:cs_manifold}
Let $(\bar{L},d)$ be a metric space. $\bar{L}$ is a \textit{Riemannian manifold
with conical singularities} (CS manifold) if it satisfies the following
conditions.
\begin{enumerate}
\item We are given a finite number of points $\{x_1,\dots,x_e\}\in \bar{L}$ such that $L:=\bar{L}\setminus\{x_1,\dots,x_e\}$ has the
structure of a smooth $m$-dimensional manifold with $e$ ends. 

More specifically, we assume given $\epsilon\in (0,1)$ such that any pair of
distinct points satisfies $d(x_i,x_j)>2\epsilon$. Set $S_i:=\{x\in L:
0<d(x,x_i)<\epsilon\}$. We then assume that $S_i$ are the ends of $L$ with
respect to some given connected links $\Sigma_i$. 
\item We are given a Riemannian metric $g$ on $L$ inducing the distance $d$. 
\item With respect to $g$, each end $S_i$ is CS in the sense of Definition
\ref{def:metrics_ends}.
\end{enumerate}
It follows from our definition that any CS manifold $\bar{L}$ is compact. We
will often not distinguish between $\bar{L}$ and $L$, but notice that $(L,g)$ is
neither compact nor complete. We call $x_i$ the \textit{singularities} of
$\bar{L}$. 
\end{definition}

\begin{definition}\label{def:ac_manifold}
Let $(L,g)$ be a Riemannian manifold. $L$ is a \textit{Riemannian manifold with
asymptotically conical ends} (AC manifold) if it satisfies the following
conditions.
\begin{enumerate}
\item $L$ is a smooth manifold with $e$ ends $S_i$ and connected links
$\Sigma_i$.
\item Each end $S_i$ is AC in the sense of Definition \ref{def:metrics_ends}.
\end{enumerate}
\end{definition}
One can check that AC manifolds are non-compact but complete.

\begin{definition} \label{def:accs_manifold}
Let $(\bar{L},d)$ be a metric space. We say that $\bar{L}$ is a
\textit{Riemannian CS/AC manifold} if it satisfies the following conditions. 
\begin{enumerate}
\item We are given a finite number of points $\{x_1,\ldots,x_s\}$ and a number $l$ such that
$L:=\bar{L}\setminus\{x_1,\dots,x_s\}$ has the structure of a smooth
$m$-dimensional manifold with $s+l$ ends. 
\item We are given a metric $g$ on $L$ inducing the distance $d$.
\item With respect to $g$, neighbourhoods of the points $x_i$ have the structure
of CS ends in the sense of Definition \ref{def:metrics_ends}. These are the
``small" ends. We also assume that the remaining ends are ``large",
\textit{i.e.} they have the structure of AC ends in the sense of Definition
\ref{def:metrics_ends}.
\end{enumerate}
We will denote the union of the CS links (respectively, of the CS ends) by
$\Sigma_0$ (respectively, $S_0$) and those corresponding to the AC links and
ends by $\Sigma_\infty$, $S_\infty$. 
\end{definition}

\begin{definition} \label{def:conifold}
We use the generic term \textit{conifold} to indicate any CS, AC or CS/AC
manifold. If $(L,g)$ is a conifold and $C:=\amalg C_i$ is the union of the
corresponding cones as in Definition \ref{def:metrics_ends}, endowed with the
induced metric $\tg$, we say that $(L,g)$ is \textit{asymptotic} to $(C,\tg)$.
\end{definition}

\begin{remark}\label{rem:nondistinct_sings}
 If we think of $\bar{L}$ as a generic compactification of the manifold with ends $L$, we should allow several CS ends to become connected by the addition of a single singular point. Notice however that we have imposed that our links be connected. We should thus allow that our points $x_i$ be not necessarily distinct. This apparent detail becomes extremely relevant when working with ``parametric connect sums'', as in \cite{pacini:weighted}, \cite{pacini:slgluing}. In \cite{pacini:weighted}, however, we do not need to mention it because there the connect sum $L_t$ is defined in terms of $L$: in some sense, the compactification $\bar{L}$ appears only \textit{a posteriori} with respect to the connect sum, as the limit of $L_t$ as $t\rightarrow 0$. In \cite{pacini:slgluing} we again do not need to mention it, this time because the connect sum is defined in terms of an immersion: by definition, the immersion is allowed to identify points so we might as well assume that the $x_i$ and cones are initially distinct. The connect sum then depends only on the identifications determined by the immersion.
\end{remark}

Cones in $\R^n$ are of course the archetype of CS/AC manifold, as follows. 
 
\begin{definition} \label{def:cone}
A subset $\bar{\mathcal{C}}\subseteq\R^n$ is a \textit{cone} if it is invariant
under dilations of $\R^n$, \textit{i.e.} if $t\cdot \bar{\mathcal{C}}\subseteq
\bar{\mathcal{C}}$, for all $t\geq 0$. It is uniquely identified by its
\textit{link} $\Sigma:=\bar{\mathcal{C}}\bigcap \Sph^{n-1}$. We will set
$\mathcal{C}:=\bar{\mathcal{C}}\setminus 0$. The cone is \textit{regular} if
$\Sigma$ is smooth. From now on we will always assume this.

Let $g'$ denote the induced metric on $\Sigma$. Then $\mathcal{C}$ with its
induced metric is isometric to $\Sigma\times (0,\infty)$ with the conical metric
$\tg:=dr^2+r^2g'$. In particular $\bar{\mathcal{C}}$ is a CS/AC manifold; it has
as many AC and CS ends as the number of connected components $\Sigma_i$ of
$\Sigma$. Each $\Sigma_i$ thus defines a singular point $x_i$ but these singular
points are not distinct: they all coincide with the origin. Notice that $\Sigma$
is a subsphere $\Sph^{m-1}\subseteq \Sph^{n-1}$ iff $\bar{\mathcal{C}}$ is an
$m$-plane in $\R^n$.
\end{definition}

Let $E$ be a vector bundle over $(L,g)$. Assume $E$ is endowed with a metric and
metric connection $\nabla$: we say that $(E,\nabla)$ is a \textit{metric pair}.
In later sections $E$ will usually be a bundle of differential forms $\Lambda^r$
on $L$, endowed with the metric and Levi-Civita connection induced from $g$. We
can define two types of Banach spaces of sections of $E$, 
referring to \cite{pacini:weighted} for further details regarding the structure
and properties of these spaces.

Regarding notation, given a vector
$\boldsymbol{\beta}=(\beta_1,\dots,\beta_e)\in \R^e$ and $j\in\N$ we set
$\boldsymbol{\beta}+j:=(\beta_1+j,\dots,\beta_e+j)$. We write
$\boldsymbol{\beta}\geq\boldsymbol{\beta}'$ iff $\beta_i\geq\beta_i'$.

\begin{definition}\label{def:csac_sectionspaces}
Let $(L,g)$ be a conifold with $e$ ends. We say that a smooth function
$\rho:L\rightarrow (0,\infty)$ is a \textit{radius function} if $\rho(x)\equiv
r$ on each end. Given any vector
$\boldsymbol{\beta}=(\beta_1,\dots,\beta_{e})\in\R^{e}$, choose a function
$\boldsymbol{\beta}$ on $L$ which, on each end $S_i$, restricts to the constant
$\beta_i$. 

Given any metric pair $(E,\nabla)$, the \textit{weighted Sobolev spaces} are
defined by
\begin{equation}\label{eq:weighted_sob}
W^p_{k;\boldsymbol{\beta}}(E):=\mbox{Banach space completion of the space
}\{\sigma\in C^\infty(E):\|\sigma\|_{W^p_{k;\boldsymbol{\beta}}}<\infty\},
\end{equation}
where we use the norm
$\|\sigma\|_{W^p_{k;\boldsymbol{\beta}}}:=(\Sigma_{j=0}^k\int_L|\rho^{
-\boldsymbol{\beta}+j}\nabla^j\sigma|^p\rho^{-m}\,\mbox{vol}_g)^{1/p}$. 

The \textit{weighted spaces of $C^k$ sections} are defined by
\begin{equation}\label{eq:weighted_C^k}
C^k_{\boldsymbol{\beta}}(E):=\{\sigma\in C^k(E):
\|\sigma\|_{C^k_{\boldsymbol{\beta}}}<\infty\},
\end{equation}
where we use the norm $\|\sigma\|_{C^k_{\boldsymbol{\beta}}}:=\sum_{j=0}^k
\mbox{sup}_{x\in L}|\rho^{-\boldsymbol{\beta}+j}\nabla^j\sigma|$. Equivalently,
$C^k_{\boldsymbol{\beta}}(E)$ is the space of sections $\sigma\in C^k(E)$ such
that $|\nabla^j \sigma|=O(r^{\boldsymbol{\beta}-j})$ as $r\rightarrow 0$
(respectively, $r\rightarrow\infty$) along each CS (respectively, AC) end. These
are also Banach spaces.

To conclude, the  \textit{weighted space of smooth sections} is defined by
\begin{equation*}
C^\infty_{\boldsymbol{\beta}}(E):=\bigcap_{k\geq 0} C^k_{\boldsymbol{\beta}}(E).
\end{equation*}
Equivalently, this is the space of smooth sections such that $|\nabla^j
\sigma|=O(\rho^{\boldsymbol{\beta}-j})$ for all $j\geq 0$. This space has a natural
Fr\'echet structure. 

When $E$ is the trivial $\R$ bundle over $L$ we obtain weighted spaces of
functions on $L$. We usually denote these by $W^p_{k,\boldsymbol{\beta}}(L)$ and
$C^k_{\boldsymbol{\beta}}(L)$. In the case of a CS/AC manifold we will often
separate the CS and AC weights, writing
$\boldsymbol{\beta}=(\boldsymbol{\mu},\boldsymbol{\lambda})$ for some
$\boldsymbol{\mu}\in \R^s$ and some $\boldsymbol{\lambda}\in \R^l$. We then
write $C^k_{(\boldsymbol{\mu},\boldsymbol{\lambda})}(E)$ and
$W^p_{k,(\boldsymbol{\mu},\boldsymbol{\lambda})}(E)$.
\end{definition}

For these spaces one can prove the validity of the following weighted version of
the Sobolev Embedding Theorems, cf. \cite{pacini:weighted}.

\begin{theorem}\label{thm:embedding}
Let $(L,g)$ be an AC manifold. Let $(E,\nabla)$ be a metric pair over $L$.
Assume $k\geq 0$, $l\in\{1,2,\dots\}$ and $p\geq 1$. Set
$p^*_l:=\frac{mp}{m-lp}$. Then, for all
$\boldsymbol{\beta}'\geq\boldsymbol{\beta}$,
\begin{enumerate}
\item If $lp<m$ then there exists a continuous embedding
$W^p_{k+l,\boldsymbol{\beta}}(E)\hookrightarrow
W^{p^*_l}_{k,\boldsymbol{\beta}'}(E)$.
\item If $lp=m$ then, for all $q\in [p,\infty)$, there exist continuous
embeddings $W^p_{k+l,\boldsymbol{\beta}}(E)\hookrightarrow
W^q_{k,\boldsymbol{\beta}'}(E)$.
\item If $lp>m$ then there exists a continuous embedding
$W^p_{k+l,\boldsymbol{\beta}}(E)\hookrightarrow C^k_{\boldsymbol{\beta}'}(E)$. 
\end{enumerate}
Furthermore, assume $kp>m$. Then the corresponding weighted
Sobolev spaces are closed under multiplication, in the following sense. For
any $\boldsymbol{\beta}_1$ and $\boldsymbol{\beta_2}$ there exists $C>0$ such
that, for all $u\in W^p_{k,\boldsymbol{\beta_1}}$ and $v\in
W^p_{k,\boldsymbol{\beta_2}}$,
\begin{equation*}
\|uv\|_{W^p_{k,\boldsymbol{\beta_1}+\boldsymbol{\beta_2}}}\leq
C\|u\|_{W^p_{k,\boldsymbol{\beta_1}}}\|v\|_{W^p_{k,\boldsymbol{\beta_2}}}.
\end{equation*}
Let $(L,g)$ be a CS manifold. Then the same conclusions hold for all
$\boldsymbol{\beta}'\leq\boldsymbol{\beta}$.

Let $(L,g)$ be a CS/AC manifold. Then, setting
$\boldsymbol{\beta}=(\boldsymbol{\mu},\boldsymbol{\lambda})$, the same
conclusions hold for $\boldsymbol{\mu}'\leq\boldsymbol{\mu}$ on the CS ends and
$\boldsymbol{\lambda}'\geq\boldsymbol{\lambda}$ on the AC ends.
\end{theorem}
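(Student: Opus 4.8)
The plan is to prove all of the embeddings by a \emph{dyadic rescaling} argument which reduces every weighted estimate on the ends to a single unweighted Sobolev (or Banach algebra) estimate on a fixed compact model annulus. I would first observe that it suffices to work on the model cone $C=\Sigma\times(0,\infty)$ with the exact conical metric $\tg=dr^2+r^2g'$: on the compact core of $L$ the weighted norms reduce to ordinary Sobolev norms and the classical embedding theorems apply, while on each end the asymptotic condition $|\tnabla^k(\phi_i^*g-\tg_i)|_{\tg_i}=O(r^{\nu_i-k})$ guarantees that the norms defined by $g$ and by $\tg$ are uniformly equivalent out to the ends, so that no constant degenerates. The heart of the matter is therefore the model computation on $C$.

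On $C$ I would decompose each end into dyadic annuli $A_j:=\Sigma\times[2^j,2^{j+1}]$, with $j\to+\infty$ on an AC end and $j\to-\infty$ on a CS end, and rescale each $A_j$ onto the fixed unit annulus $A_0=\Sigma\times[1,2]$ via the dilation $s_\lambda:(\theta,r)\mapsto(\theta,\lambda r)$ with $\lambda=2^{-j}$. Since $s_\lambda^*\tg=\lambda^2\tg$, all the $A_j$ are mutually homothetic to $A_0$, so the classical Sobolev embedding $W^p_{k+l}(A_0)\hookrightarrow W^{p^*_l}_k(A_0)$ (respectively the Banach algebra estimate, or the embedding into $C^k(A_0)$) holds on each $A_j$ \emph{with one and the same constant}. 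The only bookkeeping is to track the homogeneity weights: under $s_\lambda$ the radius function $\rho\simeq r$ scales by $\lambda$, each covariant derivative contributes a factor $\lambda^{-1}$, and $\vol_g\simeq r^{m-1}\,dr\,\vol_{g'}$ scales by $\lambda^m$. The scale-invariant measure $\rho^{-m}\vol_g$ is precisely what makes the weighted $L^p$ piece of the norm transform cleanly, and the defining exponent $p^*_l=\frac{mp}{m-lp}$ is exactly the value for which the critical embedding is \emph{invariant} under these dilations.

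Carrying out this computation, the contribution of $A_j$ to $\|\sigma\|_{W^{p^*_l}_{k;\boldsymbol{\beta}'}}$ is controlled by the contribution of $A_j$ to $\|\sigma\|_{W^p_{k+l;\boldsymbol{\beta}}}$ times a geometric factor $2^{j(\boldsymbol{\beta}-\boldsymbol{\beta}')}$. Summing over $j$, the series converges precisely when $\boldsymbol{\beta}'\ge\boldsymbol{\beta}$ on the AC ends ($j\to+\infty$) and $\boldsymbol{\beta}'\le\boldsymbol{\beta}$ on the CS ends ($j\to-\infty$), which is the stated weight condition; the scale-invariant case $\boldsymbol{\beta}'=\boldsymbol{\beta}$ survives because of the invariance of $p^*_l$, and the strict inequalities then follow by monotonicity of the weighted spaces. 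The subcritical bookkeeping is identical for the embedding into $C^k_{\boldsymbol{\beta}'}$ when $lp>m$, using the local embedding $W^p_{k+l}(A_0)\hookrightarrow C^k(A_0)$; the borderline case $lp=m$ is handled separately by running the same argument with the local embedding $W^p_l(A_0)\hookrightarrow L^q(A_0)$, valid for every finite $q\ge p$. The CS/AC statement is obtained by applying the AC argument on the large ends and the CS argument on the small ends and adding the finite, unweighted contribution of the core.

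For the multiplication property I would combine the $C^0$ embedding just established (valid since $kp>m$) with the fact that $W^{k,p}(A_0)$ is a Banach algebra in that range. On each annulus one has, with a uniform constant, $\|uv\|_{W^p_{k;\boldsymbol{\beta}_1+\boldsymbol{\beta}_2}(A_j)}\lesssim \|u\|_{C^0_{\boldsymbol{\beta}_1}(A_j)}\|v\|_{W^p_{k;\boldsymbol{\beta}_2}(A_j)}+\|u\|_{W^p_{k;\boldsymbol{\beta}_1}(A_j)}\|v\|_{C^0_{\boldsymbol{\beta}_2}(A_j)}$, where the single factor $\rho^{-m}\vol_g$ in the target weight matches the product scaling exactly because one factor is measured in the sup norm. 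Summing over $j$ by an $\ell^\infty$--$\ell^p$ H\"older inequality, and bounding $\sup_j\|\cdot\|_{C^0(A_j)}$ by the global weighted $C^0$ norm, yields the claimed algebra estimate. I expect the main obstacle to be not any single step but the uniformity of all local constants across the infinitely many annuli: this is where the precise $C^k$-asymptotics of Definition \ref{def:metrics_ends} are essential, since they are what prevent the rescaled metrics $s_{2^j}^*g$ from degenerating as $|j|\to\infty$ and hence what keep the model Sobolev and algebra constants bounded.
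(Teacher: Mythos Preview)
The paper does not actually prove this theorem: immediately before the statement it says only ``For these spaces one can prove the validity of the following weighted version of the Sobolev Embedding Theorems, cf.\ \cite{pacini:weighted}.'' So there is no in-paper argument to compare against; the result is imported wholesale from the companion analysis paper.

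Your dyadic rescaling strategy is exactly the standard method for proving such weighted embeddings on conifolds (and is almost certainly what is carried out in \cite{pacini:weighted}). The key ingredients you identify are the right ones: reduction to the exact cone via the $C^\infty$ asymptotics of Definition~\ref{def:metrics_ends}, homothety of the dyadic annuli under $s_\lambda$, the scale-invariance of the critical exponent $p^*_l$, and uniformity of the local Sobolev/algebra constants. One phrasing to tighten: in the critical case $\boldsymbol{\beta}'=\boldsymbol{\beta}$ the annulus contributions do not form a convergent geometric series; rather, the annulus-wise estimate gives $\|\sigma\|_{W^{p^*_l}_{k,\boldsymbol{\beta}}(A_j)}\le C_0\|\sigma\|_{W^{p}_{k+l,\boldsymbol{\beta}}(A_j)}$ with a fixed $C_0$, and the global bound then follows from the sequence embedding $\ell^p\hookrightarrow\ell^{p^*_l}$ (since $p^*_l\ge p$). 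You allude to this with ``survives because of the invariance of $p^*_l$'', but the sentence before it (``the series converges precisely when\dots'') suggests a geometric-series mechanism that is not quite what is happening at the endpoint. Once the endpoint is secured, the cases $\boldsymbol{\beta}'>\boldsymbol{\beta}$ (AC) and $\boldsymbol{\beta}'<\boldsymbol{\beta}$ (CS) follow, as you say, by the trivial monotone inclusions of weighted spaces. With that clarification the argument is complete.
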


%%%%%%%%%%%%%%%%%%%%%%%%%%%%%%%%%%%%%%%%%%%%%%%%%%%%
%%%%%%%%%%%%%%%%%%%%%%%%%%%%%%%%%%%%%%%%%%%%%%%%%%%%

\subsection{Cohomology of manifolds with ends}\label{ss:closedforms}

Any smooth compact manifold or smooth manifold with ends $L$ has topology of
finite type. In particular, the first cohomology group
$$H^1(L;\R):=\frac{\{\mbox{Smooth closed $1$-forms on $L$}\}}{d(C^\infty(L))}$$ 
has finite dimension $b^1(L)$, proving the following statement concerning the
structure of the space of smooth closed $1$-forms.

\begin{decomp}[for compact manifolds or manifolds with
ends]\label{decomp:closedforms} 
Let $L$ be a smooth compact manifold or a smooth manifold with ends. Choose a
finite-dimensional vector space $H$ of closed $1$-forms on $L$ such that the map
\begin{equation}
H\rightarrow H^1(L;\R), \ \ \alpha\mapsto [\alpha]
\end{equation}
is an isomorphism. Then
\begin{equation}\label{eq:closedforms}
\{\mbox{Smooth closed $1$-forms on $L$}\}= H\oplus d(C^\infty(L)). 
\end{equation}
\end{decomp}

We now want to show that in the case of a manifold with ends there exist natural
conditions on the space of 1-forms $H$. 

\begin{definition}\label{def:translationinvariant}
Given a manifold $\Sigma$, set $C:=\Sigma\times (0,\infty)$. Consider the
projection $\pi:\Sigma\times (0,\infty)\rightarrow \Sigma$. A $p$-form $\eta$ on
$C$ is \textit{translation-invariant} if it is of the form $\eta=\pi^*\eta'$,
for some $p$-form $\eta'$ on $\Sigma$.
\end{definition}

\begin{lemma}\label{lemma:formclosed}
Let $L$ be a smooth manifold with ends $S_i$. Let $\alpha$ be a smooth closed
1-form on $L$. Then there exist a smooth closed 1-form $\alpha'$ and a smooth
function $A$ on $L$ such that $\alpha'_{|S_i}$ is translation-invariant and
$\alpha=\alpha'+dA$. 
If furthermore $\alpha$ has compact support then we can choose $\alpha'$ to have
compact support.
\end{lemma}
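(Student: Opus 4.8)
The plan is to work one end at a time, exploiting the product structure $\overline{S_i}\cong\Sigma_i\times[1,\infty)$ to replace $\alpha$ by a translation-invariant form modulo an exact term, and then to patch the resulting primitives into a single globally defined smooth function on $L$. On a fixed end I would identify $\overline{S_i}$ with $C_i=\Sigma_i\times[1,\infty)$ via $\phi_i$, let $\pi:C_i\to\Sigma_i$ be the projection, let $\iota:\Sigma_i\hookrightarrow C_i$ be the inclusion as the inner slice $\{t=1\}$, and set $p:=\iota\circ\pi$. The linear homotopy $F((\theta,t),s)=(\theta,(1-s)t+s)$ from the identity to $p$ (which stays in $[1,\infty)$) produces the standard homotopy operator $H$ on forms, with $\mathrm{id}-p^*=dH+Hd$.

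Since $d\alpha=0$, this gives on $C_i$ the identity
\[
\alpha=p^*\alpha+d(H\alpha)=\pi^*(\iota^*\alpha)+dA_i,
\]
where $A_i:=H\alpha$ is a smooth function on $C_i$ and $\eta_i':=\iota^*\alpha$ is a closed $1$-form on $\Sigma_i$. Concretely, writing $\alpha=\beta_t+f\,dt$ with $\beta_t$ the $\Sigma_i$-component, one has $A_i(\theta,t)=\int_1^t f(\theta,s)\,ds$. By construction $\pi^*\eta_i'$ is translation-invariant, so on $\overline{S_i}$ the form $\alpha$ is translation-invariant modulo $dA_i$.

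It then remains to assemble the $A_i$ into one global function. Each $A_i$ is smooth on the closed end $\overline{S_i}$, hence has a well-defined normal jet along the inner boundary $\{t=1\}$; by a Seeley-type extension across that boundary followed by multiplication with a cutoff equal to $1$ on $\overline{S_i}$ and supported in a collar neighbourhood (the collars for distinct ends being disjoint), I obtain a single smooth $A$ on $L$ with $A|_{\overline{S_i}}=A_i$ for every $i$. Setting $\alpha':=\alpha-dA$, closedness and smoothness are then automatic, and on each end $\alpha'=\pi^*\eta_i'$ is translation-invariant, as required. The main and only genuinely non-formal obstacle is exactly this globalization: the homotopy formula is purely local to a single end, and care is needed to glue the end-by-end primitives across the compact core $K$ without destroying either smoothness or the translation-invariance already achieved out on the ends.

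For the compactly supported case I would add a cohomological observation: if $\alpha$ vanishes outside a compact set then $\iota_T^*\alpha=0$ for the slice $\{t=T\}$ with $T$ large, and since $\{t=1\}$ and $\{t=T\}$ are homotopic inside $\overline{S_i}$ while $\alpha$ is closed, $[\eta_i']=[\iota^*\alpha]=0$ in $H^1(\Sigma_i;\R)$. Thus $\eta_i'=d_{\Sigma_i}h_i$ is exact, and on $\overline{S_i}$ we get $\alpha=d(A_i+\pi^*h_i)=:dB_i$, i.e. $\alpha$ is exact on each end. Extending the $B_i$ to a global smooth $A$ exactly as above and putting $\alpha':=\alpha-dA$ forces $\alpha'\equiv 0$ on every end; hence $\alpha'$ is supported in the compact core $K$ and is (trivially) translation-invariant there, which settles this case.
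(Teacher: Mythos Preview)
Your proof is correct and follows essentially the same Poincar\'e-lemma scheme as the paper: both integrate the $dr$-component from a fixed reference slice to produce the primitive $A_i$ on each end, identify the translation-invariant part as the restriction of $\alpha$ to that slice, and then patch the $A_i$ into a global $A$. The only notable difference is in the compactly supported case: the paper simply chooses the reference slice $R_0$ far enough out that $\alpha$ already vanishes there, so the translation-invariant part $\alpha_1(\theta,R_0)$ is automatically zero---this avoids your detour through $[\eta_i']=0$ in $H^1(\Sigma_i)$ and the auxiliary primitive $h_i$, though your argument is also valid.
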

\begin{proof}
The proof follows the scheme of the Poincar\'e Lemma for de Rham cohomology, cf.
\textit{e.g.} \cite{botttu}. Given any $p$-form $\eta$ on $S_i=\Sigma_i\times
(1,\infty)$, we can write 
$$\eta=\eta_1(\theta,r)+\eta_2(\theta,r)\wedge dr$$
for some $r$-dependent $p$-form $\eta_1$ and ($p-1$)-form $\eta_2$ on $\Sigma$.
Specifically, $\eta_1$ is the restriction of $\eta$ to the cross-sections
$\Sigma_i\times\{r\}$ and $\eta_2:=i_{\partial r} \eta$. For a fixed $R_0>1$ we
then define $(K\eta)(\theta,r):=\int_{R_0}^r\eta_2(\theta,\rho)\,d\rho$. 

Let us apply this to the 1-form obtained by restricting $\alpha$ to $S_i$,
writing
$$\alpha_{|S_i}=\alpha_1(\theta,r)+\alpha_2(\theta,r)\, dr$$
for some $r$-dependent 1-form $\alpha_1$ and function $\alpha_2$ on $\Sigma_i$.
It is then easy to check that
\begin{eqnarray*}
d\alpha_{|S_i} &=& d_\Sigma\alpha_1-(\frac{\partial}{\partial r} \alpha_1)\wedge
dr+(d_\Sigma\alpha_2)\wedge dr,\\
K\alpha_{|S_i} &=&\int_{R_0}^r\alpha_2(\theta,\rho)\,d\rho,\\
d(K\alpha_{|S_i})
&=&\int_{R_0}^rd_\Sigma\alpha_2(\theta,\rho)\,d\rho+\alpha_2(\theta,r)\,dr.
\end{eqnarray*}
From $d\alpha=0$ it follows that $\alpha_1(\theta,R_0)+d(K\alpha)=\alpha_{|S_i}$
and that $\alpha_1(\theta,R_0)$ is closed. Setting
$\alpha'_i:=\alpha_1(\theta,R_0)$ and $A_i:=K\alpha$ we can rewrite this as
$\alpha_{|S_i}=\alpha'_i+dA_i$. Interpolating between the $A_i$ yields a global
smooth function $A$ on $L$ such that $\alpha_{|S_i}=\alpha'_i+dA_{|S_i}$. We can
now define $\alpha':=\alpha-dA$ to obtain the global relationship
$$\alpha=\alpha'+dA.$$
It is clear from this construction that if $\alpha$ has compact support then
(choosing $R_0$ large enough) $\alpha'$ also has compact support.
\end{proof}

Recall that compactly-supported forms give rise to the following theory. Let $L$
be a smooth manifold with ends. We denote by $\Lambda^p_c(L;\R)$ the space of
smooth compactly-supported $p$-forms on $L$ and by $H^p_c(L;\R)$ the
corresponding cohomology groups. Let $\Sigma$ denote the union of the links of
$L$. Notice that $L$ is deformation-equivalent to a compact manifold with
boundary $\Sigma$. Standard algebraic topology (see also \cite{joyce:I} Section
2.4) proves that the inclusion $\Sigma\subset L$ gives rise to a long exact
sequence in cohomology
\begin{equation}\label{eq:cohomsequence}
0\rightarrow H^0(L;\R)\rightarrow H^0(\Sigma;\R)\stackrel{\delta}{\rightarrow}
H^1_c(L;\R)\stackrel{\gamma}{\rightarrow} H^1(L;\R)\stackrel{\rho}{\rightarrow}
H^1(\Sigma;\R)\rightarrow\dots.
\end{equation}
Here, $\gamma$ is induced by the injection $\Lambda^1_c(L;\R) \rightarrow
\Lambda^1(L;\R)$ and $\rho$ is induced by the restriction
$\Lambda^1(L;\R)\rightarrow \Lambda^1(\Sigma;\R)$. We set
$\widetilde{H}^1_c:=\mbox{Im}(\gamma)=\mbox{Ker}(\rho)$. Exactness implies that 
\begin{align}\label{eq:dimexactseq}
\mbox{dim}(\widetilde{H}^1_c) &=
\mbox{dim}(H^1_c(L;\R))-\mbox{dim}(H^0(\Sigma;\R))+\mbox{dim}(H^0(L;\R))\\
\nonumber
&= b^1_c(L)-e+1.
\end{align}
\begin{remark} \label{rem:compactsupport}
The sequence \ref{eq:cohomsequence} shows that 
\begin{equation}\label{eq:exactseq}
H^1_c(L,\R)\simeq \widetilde{H}^1_c\oplus \mbox{Ker}(\gamma) = \widetilde{H}^1_c
\oplus \mbox{Im}(\delta).
\end{equation}
This decomposition can be expressed in words as follows. By definition,
$H^1_c(L;\R)$ is determined by the classes of compactly-supported 1-forms which
are not the differential of a compactly-supported function. Given any such form,
there are two cases: (i) it is not the differential of \textit{any} function, in
which case $\gamma$ maps its class to a non-zero element of $\widetilde{H}^1_c$,
(ii) it is the differential of some function, in which case $\gamma$ maps its
class to zero. However, this function is necessarily constant on the ends of
$L$: these constants can be parametrized via $H^0(\Sigma;\R)$. Notice that the
function is only well-defined up to a constant; likewise, $\mbox{Im}(\delta)$
coincides with $H^0(\Sigma;\R)$ only up to $H^0(L;\R)\simeq\R$.
\end{remark}

Concerning Decomposition \ref{decomp:closedforms}, we can now choose $H$ as follows. For $i=1,\dots,k=\mbox{dim}(\tilde{H}^1_c)$
let $[\alpha_i]$ be a basis of $\widetilde{H}^1_c$. According to Lemma
\ref{lemma:formclosed} we can choose $\alpha_i'$ with compact support such that
[$\alpha_i']=[\alpha_i]$. For $i=1,\dots,N=\mbox{dim}(H^1)$ let $[\alpha_i]$
denote an extension to a basis of $H^1(L;\R)$. Again using Lemma
\ref{lemma:formclosed} we can choose an extension $\alpha_i'$ of
translation-invariant 1-forms such that [$\alpha_i']=[\alpha_i]$. Set
\begin{equation}\label{eq:naturalH}
\widetilde{H}:=\mbox{span}\{\alpha_1',\dots,\alpha_k'\},\ \
H:=\mbox{span}\{\alpha_1',\dots,\alpha_N'\}.
\end{equation}
Then $H$ satisfies the assumptions of Decomposition \ref{decomp:closedforms}.
One advantage of this choice of $H$ is that it reflects the relationship of
$\widetilde{H}^1_c$ to $H^1$. Specifically, if we apply Decomposition
\ref{decomp:closedforms} to $\alpha$ writing $\alpha=\alpha'+dA$ with
$\alpha'\in H$, then $[\alpha]\in \widetilde{H}^1_c$ iff $\alpha'\in
\widetilde{H}$, \textit{i.e.} iff $\alpha'$ has compact support.

%%%%%%%%%%%%%%%%%%%%%%%%%%%%%%%%%%%%%%%%%%%%%%%%%%
%%%%%%%%%%%%%%%%%%%%%%%%%%%%%%%%%%%%%%%%%%%%%%%%%%

\subsection{Cohomology of conifolds}\label{ss:closedforms_bis}

We now want to achieve analogous decompositions for CS and AC manifolds, in
terms of weighted spaces of closed and exact $1$-forms. 

\begin{lemma}\label{l:translationinvariant}
Let $(\Sigma,g')$ be a Riemannian manifold. Let the corresponding cone $C$ have
the conical metric $\tg:=dr^2+r^2g'$. Then any translation-invariant $p$-form
$\eta=\pi^*\eta'$ belongs to the weighted space $C^\infty_{(-p,-p)}(\Lambda^p)$.
For any $\beta>0$, $\eta$ belongs to the smaller weighted space
$C^\infty_{(-p+\beta,-p-\beta)}(\Lambda^p)$ iff $\eta'=0$.
\end{lemma}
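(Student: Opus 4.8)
The plan is to compute the weighted $C^\infty$ norm of a translation-invariant $p$-form directly from the definition, by understanding how the conical metric $\tg$ scales on such forms and their covariant derivatives. The key observation is that translation-invariance means $\eta=\pi^*\eta'$ carries no $dr$ component and has coefficients independent of $r$, so all the $r$-dependence of $|\nabla^j\eta|_{\tg}$ comes purely from the metric's scaling behavior rather than from differentiation in the $r$-direction.

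First I would establish the pointwise scaling. Working on $C=\Sigma\times(0,\infty)$ with $\tg=dr^2+r^2g'$, I would note that the cotangent directions along $\Sigma$ scale: if $\{e^a\}$ is a local $g'$-orthonormal coframe on $\Sigma$, then $\{r\,e^a\}$ together with $dr$ is $\tg$-orthonormal. A translation-invariant $p$-form $\eta=\pi^*\eta'$ is built purely from the $\Sigma$-coframe, so each of its $p$ indices contributes a factor of $r^{-1}$ when measured in the $\tg$-orthonormal coframe. Hence $|\eta|_{\tg}=r^{-p}|\eta'|_{g'}$ pointwise, and since $\eta'$ is a fixed smooth form on the compact manifold $\Sigma$, this gives $|\eta|_{\tg}=O(r^{-p})$, which is exactly the $j=0$ case of membership in $C^\infty_{(-p,-p)}$.

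Next I would handle the covariant derivatives $\tnabla^j\eta$. The point is that differentiation does not improve or worsen the rate beyond the expected $j$ shift: because the Christoffel symbols of $\tg$ in the scaled frame are homogeneous of the correct degree and $\eta'$ is $r$-independent, one finds $|\tnabla^j\eta|_{\tg}=O(r^{-p-j})$, matching the defining estimate $|\nabla^j\sigma|=O(\rho^{\boldsymbol{\beta}-j})$ with $\boldsymbol{\beta}=-p$. I would verify this by an induction on $j$, checking that each application of $\tnabla$ introduces exactly one extra power of $r^{-1}$ (either from a $\partial_r$ hitting an $r$-scaled frame element or from a connection coefficient), never a positive power. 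This proves $\eta\in C^\infty_{(-p,-p)}(\Lambda^p)$.

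The second statement is then essentially a sharpness claim and should follow from the same computation read backwards. To lie in the smaller space $C^\infty_{(-p+\beta,-p-\beta)}$ with $\beta>0$, the $j=0$ estimate demands $|\eta|_{\tg}=O(r^{-p+\beta})$ as $r\to0$ on the CS end and $O(r^{-p-\beta})$ as $r\to\infty$ on the AC end. Since $|\eta|_{\tg}=r^{-p}|\eta'|_{g'}$ exactly, and $|\eta'|_{g'}$ is a fixed nonnegative function on $\Sigma$, the CS condition forces $r^{\beta}|\eta'|_{g'}\to0$ uniformly — automatic — while the AC condition forces $r^{-\beta}|\eta'|_{g'}$ bounded, which \emph{is} automatic; so the genuine constraint comes from requiring \emph{both} a faster decay at one end and the same form at the other. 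The cleanest route is to observe that $|\eta'|_{g'}$ is $r$-independent, so $r^{p-\beta}|\eta|_{\tg}=r^{-\beta}|\eta'|_{g'}$ can stay bounded as $r\to0$ only if $|\eta'|_{g'}\equiv0$; equivalently the stricter weight at the CS end $-p+\beta>-p$ cannot accommodate a nonzero $r$-independent coefficient. I expect the main obstacle to be bookkeeping the CS versus AC conventions correctly — which end uses $r\to0$ versus $r\to\infty$ and the corresponding direction of the weight inequality — so that the two-ended estimate $(-p+\beta,-p-\beta)$ is pinned down without sign errors; the underlying analysis is just the exact homogeneity $|\eta|_{\tg}=r^{-p}|\eta'|_{g'}$.
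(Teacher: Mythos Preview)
Your approach is essentially the same as the paper's: compute $|\eta|_{\tg}=r^{-p}|\eta'|_{g'}$ from the scaling of the $\tg$-orthonormal coframe, then check that covariant derivatives scale correctly via the Christoffel symbols. The paper carries out the Christoffel computation explicitly in coordinates (finding $\tGamma_{ij}^k$ bounded, $\tGamma_{ij}^0=O(r)$, $\tGamma_{i0}^k=O(r^{-1})$, the rest zero) rather than arguing by induction on homogeneity, but either route is fine.

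One point to clean up: in your final paragraph the ``automatic'' claims have the CS and AC conditions swapped. The CS requirement $|\eta|_{\tg}=O(r^{-p+\beta})$ as $r\to 0$ is equivalent to $r^{-\beta}|\eta'|_{g'}$ being bounded as $r\to 0$, which is \emph{not} automatic --- it already forces $\eta'=0$. Likewise the AC requirement amounts to $r^{\beta}|\eta'|_{g'}$ bounded as $r\to\infty$, again forcing $\eta'=0$. You recover with ``the cleanest route'' and give a correct argument there, so the proof goes through; just delete the muddled intermediate sentences. The paper itself dismisses the second statement as ``clear from the proof'', so your explicit version (once corrected) is actually more detailed.
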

\begin{proof} As seen in the proof of Lemma \ref{lemma:formclosed}, the general
$p$-form $\eta$ on $C$ can be written
$\eta=\eta_1(\theta,r)+\eta_2(\theta,r)\wedge dr$. The form is
translation-invariant iff $\eta_1$ is $r$-independent and $\eta_2=0$. In this
case $|\eta|_{\tg}=r^{-p}|\eta_1|_{g'}$ so $|\eta|_{\tg}=O(r^{-p})$ both for
$r\rightarrow 0$ and for $r\rightarrow \infty$. This proves that $\eta\in
C^0_{(-p,-p)}(\Lambda^p)$. To show that $\eta\in C^\infty_{(-p,-p)}(\Lambda^p)$
it is necessary to estimate $|\tnabla^k\eta|_{\tg}$, where $\tnabla$ is the
Levi-Civita connection. This can be done fairly explicitly in terms of
Christoffel symbols. In particular one can choose local coordinates on
$U\subset\Sigma$ defining a local frame $\partial_1,\cdots,\partial_{m-1}$. Set
$\partial_0:=\partial r$, the standard frame on $(0,\infty)$. The Christoffel
symbols for the corresponding frame on $(0,\infty)\times U$ and the metric $\tg$
can then be computed explicitly: for $i,j,k\geq 1$ one finds that
$\tGamma_{i,j}^k$ is bounded, $\tGamma_{i,j}^0=O(r)$,
$\tGamma_{i,0}^k=O(r^{-1})$,
$\tGamma_{0,0}^k=\tGamma_{i,0}^0=\tGamma_{0,0}^0=0$. The Christoffel symbols
defined by $\tg$ for the other tensor bundles depend linearly on these, so they
have the same bounds. Using these calculations one finds that
$|\tnabla^k\eta|_{\tg}=O(r^{-p-k})$, as desired.

It is clear from the proof that $\eta$ satisfies stronger bounds iff it
vanishes.
\end{proof}

\begin{decomp}[for CS or AC manifolds and forms with allowable
growth]\label{decomp:closedforms_growth}
Let $L$ be a CS manifold. Choose a finite-dimensional vector space $H$ of smooth
closed 1-forms on $L$ as in Equation \ref{eq:naturalH}. Then, for any
$\boldsymbol{\beta}<0$,
\begin{equation}\label{eq:closedforms_growth_cs}
\{\mbox{Closed 1-forms on $L$ in $C^\infty_{\boldsymbol{\beta}-1}(\Lambda^1)$}\}
= H\oplus d(C^\infty_{\boldsymbol{\beta}}(L)).
\end{equation}
Analogously, let $L$ be an AC manifold. Choose $H$ as above. Then, for any
$\boldsymbol{\beta}>0$,
\begin{equation}\label{eq:closedforms_growth_ac}
\{\mbox{Closed 1-forms on $L$ in $C^\infty_{\boldsymbol{\beta}-1}(\Lambda^1)$}\}
= H\oplus d(C^\infty_{\boldsymbol{\beta}}(L)).
\end{equation}
\end{decomp}

\begin{proof} 
Consider the CS case. Since $\boldsymbol{\beta}<0$, Lemma
\ref{l:translationinvariant} proves that $H\oplus
d(C^\infty_{\boldsymbol{\beta}}(L))\subseteq \{\mbox{Closed 1-forms in
$C^\infty_{\boldsymbol{\beta}-1}(\Lambda^1)$}\}$. Now choose a closed $\alpha\in
C^\infty_{\boldsymbol{\beta}-1}(\Lambda^1)$. By Decomposition
\ref{decomp:closedforms} we can write $\alpha=\alpha'+dA$, for some $\alpha'\in
H$ and $A\in C^\infty(L)$. Notice that $dA=\alpha-\alpha'\in
C^\infty_{\boldsymbol{\beta}-1}(\Lambda^1)$. By integration, again using the
fact $\boldsymbol{\beta}<0$, we conclude that $A\in
C^\infty_{\boldsymbol{\beta}}(L)$. This proves the opposite inclusion, thus the
identity. The AC case is analogous.
\end{proof}
 
\begin{lemma}\label{l:formexact}
Assume $L$ is a CS manifold. If $\alpha$ is a smooth closed 1-form on $L$
belonging to the space $C^\infty_{\boldsymbol{\beta}-1}(\Lambda^1)$ for some
$\boldsymbol{\beta}>0$ then there exists a smooth closed 1-form $\alpha'$ with
compact support on $L$ and a smooth function $A\in
C^\infty_{\boldsymbol{\beta}}(L)$ such that $\alpha=\alpha'+dA$. 

Assume $L$ is an AC manifold. If $\alpha$ is a smooth closed 1-form on $L$
belonging to the space $C^\infty_{\boldsymbol{\beta}-1}(\Lambda^1)$ for some
$\boldsymbol{\beta}<0$ then there exists a smooth closed 1-form $\alpha'$ with
compact support on $L$ and a smooth function $A\in
C^\infty_{\boldsymbol{\beta}}(L)$ such that $\alpha=\alpha'+dA$. 
\end{lemma}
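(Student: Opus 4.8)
The plan is to reduce to a single end and there build an explicit primitive of $\alpha$ by radial integration, exploiting the sign of $\boldsymbol{\beta}$ to integrate from the singular point (in the CS case) or from infinity (in the AC case), rather than from a finite reference radius as in Lemma \ref{lemma:formclosed}. I treat the CS case, the AC case being symmetric. Fix a CS end $S_i$ and, exactly as in the proof of Lemma \ref{lemma:formclosed}, write $\alpha_{|S_i}=\alpha_1(\theta,r)+\alpha_2(\theta,r)\,dr$ with $\alpha_1$ an $r$-dependent $1$-form on $\Sigma_i$ and $\alpha_2$ a function. Since $\tg=dr^2+r^2 g_i'$ one has $|\alpha_1|_{\tg}=r^{-1}|\alpha_1|_{g_i'}$ and $|\alpha_2\,dr|_{\tg}=|\alpha_2|$, so the hypothesis $\alpha\in C^\infty_{\boldsymbol{\beta}-1}(\Lambda^1)$ gives $|\alpha_1|_{g_i'}=O(r^{\boldsymbol{\beta}})$ and $|\alpha_2|=O(r^{\boldsymbol{\beta}-1})$ as $r\to 0$. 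Because $\boldsymbol{\beta}>0$, the first estimate shows $\alpha_1(\cdot,r)\to 0$ and the second makes the integral below convergent.

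I would then set $A_i(\theta,r):=\int_0^r\alpha_2(\theta,\rho)\,d\rho$, which is smooth on $S_i$ and satisfies $|A_i|=O(r^{\boldsymbol{\beta}})$. The crux is that $A_i$ is a genuine primitive of $\alpha$ on $S_i$. Indeed $\partial_r A_i=\alpha_2$ by construction, while $d\alpha=0$ yields (as computed in Lemma \ref{lemma:formclosed}) the relations $d_\Sigma\alpha_1=0$ and $d_\Sigma\alpha_2=\partial_r\alpha_1$; hence $d_\Sigma A_i=\int_0^r\partial_\rho\alpha_1\,d\rho=\alpha_1(\theta,r)-\lim_{\rho\to 0}\alpha_1(\theta,\rho)=\alpha_1(\theta,r)$, using that $\alpha_1(\cdot,0^+)=0$. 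Therefore $dA_i=\alpha_1+\alpha_2\,dr=\alpha_{|S_i}$. Moreover, since $dA_i=\alpha\in C^\infty_{\boldsymbol{\beta}-1}(\Lambda^1)$, one has $\nabla^j A_i=\nabla^{j-1}\alpha=O(r^{\boldsymbol{\beta}-j})$ for all $j\geq 1$, which together with $|A_i|=O(r^{\boldsymbol{\beta}})$ places $A_i$ in the weighted space of weight $\boldsymbol{\beta}$ over $S_i$.

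To globalize I would choose cut-off functions $\chi_i$ supported in $S_i$ and identically $1$ on a smaller neighbourhood of $x_i$, and set $A:=\sum_i\chi_i A_i$, extended by zero; on compact regions the radius function is bounded between positive constants, so the weight condition there is automatic and $A\in C^\infty_{\boldsymbol{\beta}}(L)$. Define $\alpha':=\alpha-dA$. Then $\alpha'$ is closed, and on the region where $\chi_i\equiv 1$ we have $dA=dA_i=\alpha$, so $\alpha'$ vanishes near each $x_i$; since $\bar L$ is compact this forces $\alpha'$ to have compact support in $L$. This produces the required decomposition $\alpha=\alpha'+dA$. For the AC case one repeats the argument on each AC end with $A_i(\theta,r):=-\int_r^\infty\alpha_2(\theta,\rho)\,d\rho$: here $\boldsymbol{\beta}<0$ guarantees both convergence of the integral and the vanishing $\alpha_1(\cdot,\infty)=0$, and the cut-offs are taken to equal $1$ near infinity.

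The main obstacle is the verification in the second paragraph that radial integration from the cone point (respectively, infinity) actually yields a primitive. This is precisely where closedness and the weight hypothesis interact: the tangential identity $d_\Sigma A_i=\alpha_1$ depends on the boundary term $\lim_{\rho\to 0}\alpha_1(\theta,\rho)$ vanishing, which is forced by the sign of $\boldsymbol{\beta}$, and that same sign is what makes the defining integral finite and keeps $A_i$ in $C^\infty_{\boldsymbol{\beta}}$. Everything else is the bookkeeping of patching with cut-offs and reading off the decay rates.
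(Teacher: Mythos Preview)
Your argument is correct and follows essentially the same route as the paper: define the primitive on each end by radial integration from the cone point (CS) or from infinity (AC), using the sign of $\boldsymbol{\beta}$ to ensure convergence and to kill the boundary term, then patch via cut-offs to obtain a global $A$ with $\alpha':=\alpha-dA$ compactly supported. The only difference is cosmetic: the paper treats the AC case first and leaves the verification $d(K\alpha)=\alpha$ and the weighted estimate on $A$ implicit, whereas you spell both out.
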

\begin{proof} The proof is a variation of the proof of Lemma
\ref{lemma:formclosed}, as follows. Consider the AC case. Write
$\alpha_{|S_i}=\alpha_1+\alpha_2\wedge dr$. Define
$K\alpha:=-\int_r^\infty\alpha_2(\theta,\rho)\,d\rho$: this converges because
$\boldsymbol{\beta}<0$. It is simple to check that $d(K\alpha)=\alpha$; in
particular, this shows that $\alpha$ is exact on each end $S_i$. Setting
$A:=K\alpha$ and extending as in Lemma \ref{lemma:formclosed} leads to a global
decomposition $\alpha=\alpha'+dA$ on $L$. By construction $\alpha'$ has compact
support and $A\in C^\infty_{\boldsymbol{\beta}}$. The CS case is analogous, with
$K\alpha:=\int_0^r\alpha_2(\theta,\rho)\,d\rho$.
\end{proof}

\begin{decomp}[for CS or AC manifolds and forms with allowable
decay]\label{decomp:closedforms_decay}
Let $L$ be a CS manifold. Assume $\boldsymbol{\beta}>0$. Choose a
finite-dimensional vector space $H$ of closed 1-forms on $L$ as in Equation
\ref{eq:naturalH}, using $\widetilde{H}_0$ to denote the space $\widetilde{H}$.
For any $i=1,\dots,e$ choose a smooth function $f_i$ on $L$ such that $f_i\equiv
1$ on the end $S_i$ and $f_i\equiv 0$ on the other ends. We can do this in such
a way that $\sum f_i\equiv 1$. Let $E_0$ denote the $e$-dimensional vector space
generated by these functions. By construction $E_0$ contains the constant
functions so $d(E_0)$ has dimension $e-1$. It is simple to check that
$d(E_0)\cap d(C^\infty_{\boldsymbol{\beta}}(L))=\{0\}$. Then
\begin{equation}\label{eq:closedforms_decay_cs}
\{\mbox{Closed 1-forms on $L$ in
$C^\infty_{\boldsymbol{\beta}-1}(\Lambda^1)$}\}=\widetilde{H}_0\oplus
d(E_0)\oplus d(C^\infty_{\boldsymbol{\beta}}(L)).
\end{equation}
Analogously, let $L$ be an AC manifold. Assume $\boldsymbol{\beta}<0$. Choose
spaces as above, this time using the notation $\widetilde{H}_\infty$ and
$E_\infty$. Then
\begin{equation}\label{eq:closedforms_decay_ac}
\{\mbox{Closed 1-forms on $L$ in $C^\infty_{\boldsymbol{\beta}-1}(\Lambda^1)$}\}
= \widetilde{H}_\infty\oplus d(E_\infty)\oplus
d(C^\infty_{\boldsymbol{\beta}}(L)).
\end{equation}
\end{decomp}
\begin{proof} Consider the CS case. The inclusion $\supseteq$ is clear.
Conversely, let $\alpha\in C^\infty_{\boldsymbol{\beta}-1}(\Lambda^1)$ be
closed. Decomposition \ref{decomp:closedforms} allows us to write
$\alpha=\alpha'+dA$, for some uniquely defined $\alpha'\in H$ and some $A\in
C^\infty(L)$, well-defined up to a constant. Lemma \ref{l:formexact} implies
that the cohomology class of $\alpha$ belongs to the space $\widetilde{H}^1_c$,
\textit{i.e.} that $\alpha'\in \widetilde{H}_0$ so it has compact support. This
shows that $dA\in C^\infty_{\boldsymbol{\beta}-1}(\Lambda^1)$. Writing
$A_i:=A_{|S_i}$ we find $dA_i=d_{\Sigma_i}A_i+\frac{\partial A_i}{\partial
r}\,dr$, thus $\frac{\partial A_i}{\partial r}\in
C^\infty_{\boldsymbol{\beta}-1}(L)$. This shows that $\int_0^r\frac{\partial
A_i}{\partial r}\,d\rho\in C^\infty_{\boldsymbol{\beta}}(L)$. This determines
$A_i$ up to a constant $c_i$ on each end. Together with Equation
\ref{eq:exactseq} this proves the claim. The AC case is analogous.
\end{proof}

We now turn to the case of CS/AC manifolds, concentrating on the situations of
most interest to us.

\begin{decomp}[for CS/AC manifolds]\label{decomp:closedforms_accs}
Let $L$ be a CS/AC manifold with $s$ CS ends and $l$ AC ends. As usual we denote
the union of the CS links by $\Sigma_0$ and the union of the AC links by
$\Sigma_\infty$. Choose a finite-dimensional vector space $H$ of closed 1-forms
on $L$ as in Equation \ref{eq:naturalH}, using $\widetilde{H}_{0,\infty}$ to
denote the space $\widetilde{H}$. For any $i=1,\dots,s+l$ choose a function
$f_i$ such that $f_i\equiv 1$ on the end $S_i$ and $f_i\equiv 0$ on the other
ends. We can assume that $\sum f_i\equiv 1$. Let $E_{0,\infty}$ denote the
$(s+l)$-dimensional vector space generated by these functions. Then, for any
$\boldsymbol{\mu}>0$ and $\boldsymbol{\lambda}<0$,
\begin{equation}\label{eq:closedforms_decay_accs}
\{\mbox{Closed 1-forms on $L$ in
$C^\infty_{(\boldsymbol{\mu}-1,\boldsymbol{\lambda}-1)}(\Lambda^1)$}\}=
\widetilde{H}_{0,\infty}\oplus d(E_{0,\infty})\oplus
d(C^\infty_{(\boldsymbol{\mu},\boldsymbol{\lambda})}(L)). 
\end{equation}
Now let $\Lambda^p_{c,\bullet}(L;\R)$ denote the space of $p$-forms on $L$ which
vanish in a neighbourhood of the singularities, with no condition on the large
ends. Let $H^p_{c,\bullet}(L;\R)$ denote the corresponding cohomology groups.
Let $\widetilde{H}^1_{c,\bullet}$ denote the image of the map
$\gamma:H^1_{c,\bullet}(L;\R)\rightarrow H^1(L;\R)$. Choose a finite-dimensional
vector space $\widetilde{H}_{0,\bullet}$ of translation-invariant closed 1-forms
on $L$ with compact support in a neighbourhood of the singularities and such
that the map
\begin{equation}
\widetilde{H}_{0,\bullet}\rightarrow \widetilde{H}^1_{c,\bullet}, \ \
\alpha\mapsto [\alpha]
\end{equation}
is an isomorphism. For any $i=1,\dots,s$ choose a function $f_i$ such that
$f_i\equiv 1$ on the CS end corresponding to the singularity $x_i$ and
$f_i\equiv 0$ on the other ends. Let $E_0$ denote the s-dimensional vector space
generated by these functions. Then, for any $\boldsymbol{\mu}>0$ and
$\boldsymbol{\lambda}>0$,
\begin{equation}\label{eq:closedforms_growth_accs}
\{\mbox{Closed 1-forms on $L$ in
$C^\infty_{(\boldsymbol{\mu}-1,\boldsymbol{\lambda}-1)}(\Lambda^1)$}\}=
\widetilde{H}_{0,\bullet}\oplus d\Big(E_0\oplus
C^\infty_{(\boldsymbol{\mu},\boldsymbol{\lambda})}(L)\Big). 
\end{equation}
\end{decomp}
\proof{} The proof is similar to the proofs of the previous decompositions. It
may however be good to emphasize that, in the case $\boldsymbol{\mu}>0$ and
$\boldsymbol{\lambda}>0$, $d(E_0)\cap
d(C^\infty_{(\boldsymbol{\mu},\boldsymbol{\lambda})}(L))\neq\{0\}$ (it is
one-dimensional). This explains the slightly different statement of
Decomposition \ref{eq:closedforms_growth_accs}.
\endproof

\begin{remark}
The weight $\boldsymbol{\beta}=0$ corresponds to an exceptional case in Lemma
\ref{l:formexact}: integration will generally generate log terms, so we cannot
conclude that $A\in C^\infty_{\boldsymbol{\beta}}$ there. One can analogously
argue that
$C^\infty_{-\boldsymbol{1}}(\Lambda^1)/d(C^\infty_{\boldsymbol{0}}(L))$ is not
finite-dimensional. 

Similar decompositions hold for $k$-forms: in this setting the exceptional case
corresponds to $\boldsymbol{\beta}=k-1$.
\end{remark} 

\begin{remark}
Notice that the above decompositions do not cover all possibilities: for
example, given a CS manifold we could decide to study the space of closed
1-forms in $C^\infty_{\boldsymbol{\beta}-1}(\Lambda^1)$ corresponding to a
weight $\boldsymbol{\beta}=(\beta_1,\dots,\beta_e)$ with some $\beta_i$ positive
and others negative. However, it should be clear from the above discussion how
to use the same ideas to cover any other case of interest. We have restricted
our attention to the cases most relevant to this paper.
\end{remark}

For future reference it is useful to emphasize the topological interpretation of
some of the previous results. The reasons underlying our interest for each case
will become apparent in Section \ref{s:moduli}.

\begin{corollary}\label{cor:topsummary}
Let $L$ be a smooth compact manifold. Then 
\begin{equation*}
\{\mbox{Closed $1$-forms on $L$}\}\simeq H^1(L;\R)\oplus d(C^\infty(L)). 
\end{equation*}
Let $(L,g)$ be an AC manifold. Then for $\boldsymbol{\beta}<0$
\begin{equation*}
\{\mbox{Closed $1$-forms on $L$ in
$C^\infty_{\boldsymbol{\beta}-1}(\Lambda^1)$}\} \simeq H^1_c(L;\R) \oplus
d(C^\infty_{\boldsymbol{\beta}}(L)), 
\end{equation*}
while for $\boldsymbol{\beta}>0$
\begin{equation*}
\{\mbox{Closed $1$-forms on $L$ in
$C^\infty_{\boldsymbol{\beta}-1}(\Lambda^1)$}\} \simeq H^1(L;\R) \oplus
d(C^\infty_{\boldsymbol{\beta}}(L)).
\end{equation*}
Let $(L,g)$ be a CS manifold with link $\Sigma_0$. Then for
$\boldsymbol{\beta}>0$
\begin{align*}
&\{\mbox{Closed $1$-forms on $L$ in
$C^\infty_{\boldsymbol{\beta}-1}(\Lambda^1)$}\}\\
&\quad\simeq \mbox{Ker}\left(H^1(L)\stackrel{\rho}{\rightarrow}
H^1(\Sigma_0)\right) \oplus d(E_0)\oplus d(C^\infty_{\boldsymbol{\beta}}(L)).
\end{align*}
Let $(L,g)$ be a CS/AC manifold with link $\Sigma=\Sigma_0\amalg\Sigma_\infty$.
Then for $\boldsymbol{\mu}>0$ and $\boldsymbol{\lambda}<0$
\begin{align*}
&\{\mbox{Closed $1$-forms on $L$ in
$C^\infty_{(\boldsymbol{\mu}-1,\boldsymbol{\lambda}-1)}(\Lambda^1)$}\}\\
&\quad\simeq \mbox{Ker}\left(H^1_{\bullet,c}(L)\stackrel{\rho}{\rightarrow}
H^1(\Sigma_0)\right)\oplus d(E_0) \oplus
d(C^\infty_{(\boldsymbol{\mu},\boldsymbol{\lambda})}(L)),
\end{align*}
while for $\boldsymbol{\mu}>0$ and $\boldsymbol{\lambda}>0$
\begin{align*}
&\{\mbox{Closed $1$-forms on $L$ in
$C^\infty_{(\boldsymbol{\mu}-1,\boldsymbol{\lambda}-1)}(\Lambda^1)$}\}\\
&\quad\simeq \mbox{Ker}\left(H^1(L)\stackrel{\rho}{\rightarrow}
H^1(\Sigma_0)\right)\oplus d\left(E_0\oplus
C^\infty_{(\boldsymbol{\mu},\boldsymbol{\lambda})}(L)\right).
\end{align*}
\end{corollary}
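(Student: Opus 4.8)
The plan is to obtain each isomorphism by quoting the matching Decomposition proved above and then re-reading its finite-dimensional summand cohomologically; the only algebraic inputs are the long exact sequence \ref{eq:cohomsequence}, the splitting \ref{eq:exactseq} of Remark \ref{rem:compactsupport}, and suitable relative analogues of these. So for each geometric setting I would first cite the relevant decomposition verbatim and then replace the abstractly chosen spaces $H$, $\widetilde{H}_0$, $\widetilde{H}_\infty$, $\widetilde{H}_{0,\infty}$, $\widetilde{H}_{0,\bullet}$ and the exact pieces $d(E_\bullet)$ by the asserted cohomology groups.

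The compact case is literally Decomposition \ref{decomp:closedforms}, and $H\simeq H^1(L;\R)$ by the defining property of $H$ in Equation \ref{eq:naturalH}; the AC case with $\boldsymbol{\beta}>0$ is Equation \ref{eq:closedforms_growth_ac} with the same identification. For the AC case $\boldsymbol{\beta}<0$ I would start from Equation \ref{eq:closedforms_decay_ac}, which produces $\widetilde{H}_\infty\oplus d(E_\infty)$, and identify this with $H^1_c(L;\R)$ as follows: by construction $\widetilde{H}_\infty\simeq\widetilde{H}^1_c=\mathrm{Im}(\gamma)$, while each $df_i$ is compactly supported and represents $\delta$ applied to the indicator of the link $\Sigma_i$, so $d(E_\infty)$ maps onto $\mathrm{Im}(\delta)=\mathrm{Ker}(\gamma)$ with the single relation $\sum_i df_i=0$ matching $\delta(\mathbf{1})=0$. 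The splitting \ref{eq:exactseq} then yields $\widetilde{H}_\infty\oplus d(E_\infty)\simeq H^1_c(L;\R)$. The pure CS case is handled from Equation \ref{eq:closedforms_decay_cs}: here $\Sigma=\Sigma_0$, so $\widetilde{H}_0\simeq\widetilde{H}^1_c=\mathrm{Ker}(\rho\colon H^1(L)\to H^1(\Sigma_0))$ directly, while $d(E_0)$ is retained.

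For the two CS/AC statements I would first record the relative groups: $H^1_{c,\bullet}$ (forms compactly supported only towards the small ends, i.e.\ relative to $\Sigma_0$) and $H^1_{\bullet,c}$ (compactly supported only towards the large ends, i.e.\ relative to $\Sigma_\infty$), together with their exact sequences. The growth case $\boldsymbol{\mu}>0$, $\boldsymbol{\lambda}>0$ is then almost immediate from Equation \ref{eq:closedforms_growth_accs}: the exact sequence of $(\bar{L},\Sigma_0)$ gives $\widetilde{H}_{0,\bullet}\simeq\widetilde{H}^1_{c,\bullet}=\mathrm{Im}(\gamma)=\mathrm{Ker}(\rho\colon H^1(L)\to H^1(\Sigma_0))$, and the combined term $d(E_0\oplus C^\infty_{(\boldsymbol{\mu},\boldsymbol{\lambda})}(L))$ is carried over unchanged. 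The decay case $\boldsymbol{\mu}>0$, $\boldsymbol{\lambda}<0$ is the one requiring real work: starting from Equation \ref{eq:closedforms_decay_accs} one first identifies $\widetilde{H}_{0,\infty}\oplus d(E_{0,\infty})\simeq H^1_c(L;\R)$ exactly as in the AC case, and then re-splits $H^1_c(L;\R)$ using the exact sequence of the triple $(\bar{L},\Sigma_0\amalg\Sigma_\infty,\Sigma_\infty)$, in which $H^1(\Sigma_0\amalg\Sigma_\infty,\Sigma_\infty)\simeq H^1(\Sigma_0)$ by excision, to obtain $H^1_c(L;\R)\simeq\mathrm{Ker}(\rho\colon H^1_{\bullet,c}(L)\to H^1(\Sigma_0))\oplus\mathrm{Im}(\delta)$, with $\mathrm{Im}(\delta)\simeq d(E_0)$ now $s$-dimensional because $H^0(\bar{L},\Sigma_\infty)=0$ removes the usual constant relation.

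The step I expect to be the main obstacle is precisely this CS/AC decay bookkeeping. One has to set up the correct relative and triple sequences, verify that the restriction $\rho$ only sees the CS links $\Sigma_0$ (the behaviour at the AC ends already being fixed by the sign $\boldsymbol{\lambda}<0$), and reconcile the dimension mismatch between the ``all ends'' space $E_{0,\infty}$ appearing in the decomposition and the ``CS ends'' space $E_0$ appearing in the statement. The same care explains the asymmetry between the two CS/AC lines: in the growth case the one-dimensional overlap $d(E_0)\cap d(C^\infty_{(\boldsymbol{\mu},\boldsymbol{\lambda})}(L))$ noted at the end of Decomposition \ref{decomp:closedforms_accs} is nonzero, which is exactly why that case must be written with the combined exact term $d(E_0\oplus C^\infty_{(\boldsymbol{\mu},\boldsymbol{\lambda})}(L))$ rather than a genuine direct sum. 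Once these accounting points are matched against the exact sequences, every isomorphism follows formally.
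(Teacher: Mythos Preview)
Your proposal is correct and follows essentially the same route as the paper: each case is read off from the corresponding Decomposition, and the only nontrivial case (CS/AC with $\boldsymbol{\lambda}<0$) is handled by first identifying $\widetilde{H}_{0,\infty}\oplus d(E_{0,\infty})\simeq H^1_c(L;\R)$ via Remark~\ref{rem:compactsupport} and then re-splitting $H^1_c$ using a relative long exact sequence that isolates the CS links $\Sigma_0$. The paper phrases the latter as the sequence of the inclusion of pairs $(\Sigma_0,\emptyset)\subset (L,\Sigma_\infty)$ rather than your triple, but the two are equivalent, and your dimension bookkeeping for $d(E_0)$ versus $d(E_{0,\infty})$ is exactly the point the paper makes by introducing the complement $E'$ with $E_{0,\infty}=E_0\oplus\R\oplus E'$.
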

\begin{proof}
The compact case coincides with Equation \ref{eq:closedforms}. The AC case with
$\boldsymbol{\beta}<0$ follows from Equation \ref{eq:closedforms_decay_ac} and
Remark \ref{rem:compactsupport}. The AC case with $\boldsymbol{\beta}>0$
coincides with Equation \ref{eq:closedforms_growth_ac}. The CS case coincides
with Equation \ref{eq:closedforms_decay_cs}. 

Let us now focus on the CS/AC case with $\boldsymbol{\lambda}<0$. Using the
notation of Decomposition \ref{decomp:closedforms_accs}, let $E'$ denote a
complement of $E_0\oplus\R$ in $E_{0,\infty}$, \textit{i.e.}
$E_{0,\infty}=E_0\oplus\R\oplus E'$. Notice that the long exact sequence
\ref{eq:cohomsequence} with $\Sigma=\Sigma_0\amalg\Sigma_\infty$ leads to an
identification $H^1_c(L;\R)\simeq \widetilde{H}^1_c(L)\oplus d(E_{0,\infty})$.
One can also set up the ``relative" analogue of Sequence \ref{eq:cohomsequence}
using the inclusion of pairs $(\Sigma_0,\emptyset)\subset (L,\Sigma_\infty)$.
Using notation analogous to that of Decomposition \ref{decomp:closedforms_accs}
this leads to the long exact sequence
\begin{equation*}
0\rightarrow H^0_c(L;\R)\rightarrow H^0_{\bullet,c}(L;\R)\rightarrow
H^0(\Sigma_0;\R)\rightarrow H^1_c(L;\R)\stackrel{\gamma}{\rightarrow}
H^1_{\bullet,c}(L;\R)\stackrel{\rho}{\rightarrow}
H^1(\Sigma_0;\R)\rightarrow\dots
\end{equation*}
Since $H^0_c(L;\R)=0$ and $H^0_{\bullet,c}(L;\R)=0$, one obtains an
identification $H^1_c(L;\R)\simeq
E_0\oplus\mbox{Ker}\left(H^1_{\bullet,c}(L)\stackrel{\rho}{\rightarrow}
H^1(\Sigma_0)\right)$. Comparing these identifications yields an identification
$\widetilde{H}^1_c(L;\R)\oplus
d(E')\simeq\mbox{Ker}\left(H^1_{\bullet,c}(L)\stackrel{\rho}{\rightarrow}
H^1(\Sigma_0)\right)$. The claim follows. 

Now consider the CS/AC case with $\boldsymbol{\lambda}>0$. The long exact
sequence \ref{eq:cohomsequence} with $\Sigma=\Sigma_0$ yields
\begin{equation}\label{eq:cohomsequence_accsbis}
0\rightarrow H^0(L;\R)\rightarrow H^0(\Sigma_0;\R)\rightarrow
H^1_{c,\bullet}(L;\R)\stackrel{\gamma}{\rightarrow}
H^1(L;\R)\stackrel{\rho}{\rightarrow} H^1(\Sigma_0;\R)\rightarrow\dots
\end{equation}
This proves the final claim.
\end{proof}

\begin{remark}\label{rem:topsummary}
 Compare Equations \ref{eq:closedforms_decay_cs}, \ref{eq:closedforms_decay_ac} with the corresponding equations in the statement of Corollary \ref{cor:topsummary}. When working with AC manifolds we choose to group the two topological terms of Equation \ref{eq:closedforms_decay_ac} into one space $H^1_c(L;\R)$. When working with CS manifolds we prefer to keep the two topological terms of Equation \ref{eq:closedforms_decay_cs} separate and to emphasize the ``geometric'' meaning of one of them as kernel of a certain restriction map. These choices are based on the different roles that these spaces will play in Section \ref{s:moduli}, cf. also Remark \ref{rem:coh_dim}. 
\end{remark}

%%%%%%%%%%%%%%%%%%%%%%%%%%%%%%%%%%%%%%%%%%%%%%%%%%%%%%%%%%%%%%%%%%%
%%%%%%%%%%%%%%%%%%%%%%%%%%%%%%%%%%%%%%%%%%%%%%%%%%%%%%%%%%%%%%%%%%%

\section{Lagrangian conifolds}\label{s:lagconifolds}
A priori, a \textit{CS/AC submanifold} might simply be defined as an immersed
submanifold whose topology and induced metric is of the type defined in Section
\ref{ss:accs_review}. However, for the purposes of this article it is convenient
to strengthen the hypotheses by adding the requirement that the submanifold have
a well-defined cone at each singularity and at each end. The precise definitions
are as follows. We restrict our attention to Lagrangian submanifolds in K\"ahler
ambient spaces, but it is clear how one might extend these definitions to other
settings.
\begin{definition} \label{def:lagr}
Let $(M^{2m},\omega)$ be a symplectic manifold. An embedded or immersed
submanifold $\iota:L^m\rightarrow M$ is \textit{Lagrangian} if
$\iota^*\omega\equiv 0$. The immersion allows us to view the tangent bundle $TL$
of $L$ as a subbundle of $TM$ (more precisely, of $\iota^*TM$). When $M$ is
K\"ahler with structures $(g,J,\omega)$ it is simple to check that $L$ is
Lagrangian iff $J$ maps $TL$ to the normal bundle $NL$ of $L$, \textit{i.e.}
$J(TL)=NL$. 
\end{definition}

\begin{definition}\label{def:aclagsub}
Let $L^m$ be a smooth manifold. Assume given a Lagrangian immersion
$\iota:L\rightarrow \C^m$, the latter endowed with its standard structures
$\tilde{J},\tilde{\omega}$. We say that $L$ is an \textit{asymptotically conical Lagrangian submanifold}
with \textit{rate} $\boldsymbol{\lambda}$ if it satisfies the following
conditions.
\begin{enumerate}
\item We are given a compact subset $K\subset L$ such that $S:=L\setminus K$ has
a finite number of connected components $S_1,\dots,S_e$.
\item We are given Lagrangian cones $\mathcal{C}_i\subset \C^m$ with smooth
connected links $\Sigma_i:=\mathcal{C}_i\bigcap \Sph^{2m-1}$. Let
$\iota_i:\Sigma_i\times (0,\infty)\rightarrow \C^m$ denote the natural
immersions, parametrizing $\mathcal{C}_i$.
\item We are finally given an $e$-tuple of \textit{convergence rates}
$\boldsymbol{\lambda}=(\lambda_1,\dots,\lambda_e)$ with $\lambda_i<2$, \textit{centers} $p_i\in\C^m$ and
diffeomorphisms $\phi_i:\Sigma_i\times [R,\infty)\rightarrow \overline{S_i}$
for some $R>0$ such that, for $r\rightarrow\infty$ and all $k\geq 0$,
\begin{equation}\label{eq:aclagdecay}
|\tnabla^k(\iota\circ\phi_i-(\iota_i+p_i)|=O(r^{\lambda_i-1-k})
\end{equation}
with respect to the conical metric $\tg_i$ on $\cone_i$.
\end{enumerate}
\end{definition}

Notice that the restriction $\lambda_i<2$ ensures that the cone is unique but is
weak enough to allow the submanifold to converge to a translated copy
$\mathcal{C}_i+p'_i$ of the cone (\textit{e.g.} if $\lambda_i=1$), or even to
slowly pull away from the cone (if $\lambda_i>1$). 

\begin{definition}\label{def:cslagsub}
Let $\bar{L}^m$ be a smooth manifold except for a finite number of possibly singular
points $\{x_1,\dots,x_e\}$. Assume given a continuous map
$\iota:\bar{L}\rightarrow \C^m$ which restricts to a smooth Lagrangian immersion of
$L:=\bar{L}\setminus\{x_1,\dots,x_e\}$. We say that $\bar{L}$ (or $L$) is a \textit{conically singular
Lagrangian submanifold} with \textit{rate} $\boldsymbol{\mu}$ if it satisfies
the following conditions.
\begin{enumerate}
\item We are given open connected neighbourhoods $S_i$ of $x_i$.
\item We are given Lagrangian cones $\mathcal{C}_i\subset \C^m$ with smooth
connected links $\Sigma_i:=\mathcal{C}_i\bigcap \Sph^{2m-1}$. Let
$\iota_i:\Sigma_i\times (0,\infty)\rightarrow \C^m$ denote the natural
immersions, parametrizing $\mathcal{C}_i$.
\item We are finally given an $e$-tuple of \textit{convergence rates}
$\boldsymbol{\mu}=(\mu_1,\dots,\mu_e)$ with $\mu_i>2$, \textit{centers} $p_i\in\C^m$ and diffeomorphisms
$\phi_i:\Sigma_i\times (0,\epsilon]\rightarrow \overline{S_i}\setminus\{x_i\}$ such that, for
$r\rightarrow 0$ and all $k\geq 0$,
\begin{equation}\label{eq:cslagdecay}
|\tnabla^k(\iota\circ\phi_i-(\iota_i+p_i))|=O(r^{\mu_i-1-k})
\end{equation}
with respect to the conical metric $\tg_i$ on $\cone_i$. Notice that our assumptions imply that $\iota(x_i)=p_i$.
\end{enumerate}
\end{definition}

It is simple to check that AC Lagrangian submanifolds, with the induced metric,
satisfy Definition \ref{def:ac_manifold} with $\nu_i=\lambda_i-2$. The analogous
fact holds for CS Lagrangian submanifolds.

\begin{definition} \label{def:accslagsub}
Let $\bar{L}^m$ be a smooth manifold except for a finite number of possibly singular
points $\{x_1,\dots,x_s\}$ and with $l$ ends. Assume
given a continuous map $\iota:\bar{L}\rightarrow \C^m$ which restricts to a smooth Lagrangian
immersion of $L:=\bar{L}\setminus\{x_1,\dots,x_s\}$. We say that $\bar{L}$ (or
$L$) is a \textit{CS/AC Lagrangian submanifold} with \textit{rate}
$(\boldsymbol{\mu},\boldsymbol{\lambda})$ if in a neighbourhood of the points
$x_i$ it has the structure of a CS submanifold with rates $\mu_i$ and in a
neighbourhood of the remaining ends it has the structure of an AC submanifold
with rates $\lambda_i$.

We use the generic term \textit{Lagrangian conifold} (even though ``subconifold"
would be more appropriate) to indicate any CS, AC or CS/AC Lagrangian
submanifold.
\end{definition}

\begin{example} \label{e:lagcone}
Let $\mathcal{C}$ be a cone in $\C^m$ with smooth link $\Sigma^{m-1}$. It can be
shown that $\mathcal{C}$ is a Lagrangian iff $\Sigma$ is \textit{Legendrian} in
$\Sph^{2m-1}$ with respect to the natural \textit{contact structure} on the
sphere. Then $\mathcal{C}$ is a CS/AC Lagrangian submanifold of $\C^m$ with rate
$(\boldsymbol{\mu,\lambda})$ for any $\boldsymbol{\mu}$ and
$\boldsymbol{\lambda}$.
\end{example}

The definition of CS Lagrangian submanifolds can be generalized to K\"ahler
ambient spaces as follows. Once again we denote the standard structures on
$\C^m$ by $\tilde{J}$, $\tilde{\omega}$.
\begin{definition} \label{def:generalcslagsub}
Let $(M^{2m},J,\omega)$ be a K\"ahler manifold and $\bar{L}^m$ be a smooth
manifold except for a finite number of possibly singular points $\{x_1,\dots,x_e\}$. Assume given a continuous map $\iota:\bar{L}\rightarrow M$ which restricts
to a smooth Lagrangian immersion of $L:=\bar{L}\setminus\{x_1,\dots,x_e\}$. We say that $\bar{L}$ (or
$L$) is a \textit{Lagrangian submanifold with conical singularities} (CS
Lagrangian submanifold) if it satisfies the following conditions.
\begin{enumerate}
\item We are given isomorphisms $\upsilon_i:\C^m\rightarrow T_{\iota(x_i)}M$ such that
$\upsilon_i^*\omega=\tilde{\omega}$ and $\upsilon_i^*J=\tilde{J}$. 

According to Darboux' theorem, cf. \textit{e.g.} \cite{weinstein}, there then
exist an open ball $B_R$ in $\C^m$ (of small radius $R$) and diffeomorphisms
$\Upsilon_i:B_R\rightarrow M$ such that $\Upsilon(0)=\iota(x_i)$,
$d\Upsilon_i(0)=\upsilon_i$ and $\Upsilon_i^*\omega=\tilde{\omega}$.
\item We are given open neighbourhoods $S_i$ of $x_i$ in $\bar{L}$. We assume $S_i$ are small, in the sense that
the compositions
\begin{equation*}
 \Upsilon_i^{-1}\circ\iota:S_i\rightarrow B_R 
\end{equation*}
are well-defined. 

We are
also given Lagrangian cones $\mathcal{C}_i\subset\C^m$ with smooth connected
links $\Sigma_i:=\mathcal{C}_i\bigcap \Sph^{2m-1}$. Let $\iota_i:\Sigma_i\times
(0,\infty)\rightarrow \C^m$ denote the natural immersions, parametrizing
$\cone_i$. 
\item We are finally given an $e$-tuple of \textit{convergence rates}
$\boldsymbol{\mu}=(\mu_1,\dots,\mu_e)$ with $\mu_i\in (2,3)$ and diffeomorphisms 
$\phi_i:\Sigma_i\times (0,\epsilon]\rightarrow \overline{S_i}\setminus\{x_i\}$ such that, as $r\rightarrow 0$ and
for all $k\geq 0$,
\begin{equation}\label{eq:generalcslagdecay}
|\tnabla^k(\Upsilon_i^{-1}\circ\iota\circ\phi_i-\iota_i)|=O(r^{\mu_i-1-k})
\end{equation}
with respect to the conical metric $\tg_i$ on $\cone_i$.
\end{enumerate}
We call $x_i$ the \textit{singularities} of $\bar{L}$ and $\upsilon_i$ the
\textit{identifications}.
\end{definition}
One can check that, when $M=\C^m$, Definition \ref{def:generalcslagsub} coincides with Definition \ref{def:cslagsub} if we choose $\Upsilon_i(x):=x+\iota(x_i)$. Notice that the local diffeomorphisms between $M$ and $\C^m$ are prescribed only up to first order. Changing the diffeomorphism $\Upsilon_i$ (while keeping $\upsilon_i$ fixed) will perturb the map $\phi_i$ (and its derivatives) by a term of order $O(r^{2-k})$. In order to make the rate be independent of the particular diffeomorphism chosen, we need to introduce a constraint on the range of $\mu_i$ ensuring that $O(r^{2-k})<O(r^{\mu_i-1-k})$, thus $\mu_i<3$.

\begin{remark}
One could also define and study AC Lagrangian submanifolds in $M$, but this
would require a preliminary study of AC metrics on K\"ahler manifolds, going
beyond the scope of this article. We refer to \cite{pacini:defs} for some
details in this direction.
\end{remark}

%%%%%%%%%%%%%%%%%%%%%%%%%%%%%%%%%%%%%%%%%%%%%%%%%%%%%%%%%%%%%%%%%
%%%%%%%%%%%%%%%%%%%%%%%%%%%%%%%%%%%%%%%%%%%%%%%%%%%%%%%%%%%%%%%%%

\section{Deformations of Lagrangian conifolds}\label{s:lagdefs}
We now want to understand how to parametrize the \textit{Lagrangian
deformations} of a given Lagrangian conifold $L\subset M$. Since the Lagrangian
condition is invariant under reparametrization of $L$, to avoid huge amounts of
geometric redundancy it is best to work in terms of non-parametrized
submanifolds; in other words, in terms of equivalence classes of immersed
submanifolds, where two immersions are \textit{equivalent} if they differ by a
reparametrization. Then, to parametrize the possible deformations of $L$, it is
sufficient to prove a \textit{Lagrangian neighbourhood theorem}. 

\begin{remark} 
The analogous situation in the Riemannian setting is well-known. The set
$\mbox{Imm}(L,M)$ of immersions $L\rightarrow (M,g)$ can be topologized via the
\textit{$C^1$} or \textit{Whitney} topology, \textit{i.e.} in terms of the
natural topology on the first jet bundle $J^1(L,M)$. The group of
diffeomorphisms $\mbox{Diff}(L)$ acts on this space by reparametrization. Choose
an element $\iota\in \mbox{Imm}(L,M)$. Let $NL$ denote the normal bundle. Using
the \textit{tubular neighbourhood theorem} one can define a natural injection
$$\Lambda^0(NL)\rightarrow\mbox{Imm}(L,M)/\mbox{Diff}(L).$$ 
In standard situations (for example when $L$ is compact) this actually defines a
local homeomorphism between the natural topologies on these spaces.
\end{remark}

A foundation for the theory of Lagrangian neighbourhoods is provided by the
following linear-algebraic construction. Let $W$ be a finite-dimensional real
vector space. Then $W\oplus W^*$ admits a canonical symplectic structure
$\hat{\omega}$ defined as follows:
\begin{equation}\label{eq:canonicallinearcotg}
\hat{\omega}(w_1+\alpha_1,w_2+\alpha_2):=\alpha_2(w_1)-\alpha_1(w_2).
\end{equation}
It turns out that this example of symplectic vector space is actually very
general, in the following sense. Let $(V,\omega)$ be a symplectic vector space.
Let $W\subset V$ be a Lagrangian subspace. Choose a Lagrangian complement
$Z\subset V$, so that $V=W\oplus Z$. It is simple to check that the restriction
of $\omega$ to $Z$ defines an isomorphism
\begin{equation}\label{eq:canonicalisgeneral}
\omega_{|Z}:Z\rightarrow W^*,\ \ z\mapsto\omega(z,\cdot)
\end{equation}
and that, using this isomorphism, one can build an isomorphism $\gamma:(W\oplus
W^*,\hat{\omega})\simeq (V,\omega)$. Furthermore, such $\gamma$ is unique if we
impose that it coincide with the identity on $W$. Adding this condition thus
implies that $\gamma$ is uniquely defined by the choice of $Z$.

It is a well-known fact, first noticed by Souriau \cite{souriau}, that a similar
construction exists also for symplectic manifolds. The construction is based on
the following standard facts. Given any manifold $L$, the cotangent bundle
$T^*L$ admits a canonical symplectic structure $\hat{\omega}$. Specifically,
consider the \textit{tautological} 1-form on $T^*L$ defined by
$\hat{\lambda}[\alpha](v):=\alpha(\pi_*(v))$, where $\pi:T^*L\rightarrow L$ is
the natural projection. Then $\hat{\omega}:=-d\hat{\lambda}$. Notice that a
section of $T^*L$ is simply a 1-form $\alpha$ on $L$. The graph $\Gamma(\alpha)$
is Lagrangian in $T^*L$ iff $\alpha$ is closed. In particular the zero section
$L\subset T^*L$ is Lagrangian. Furthermore each fibre $\pi^{-1}(p)=T^*_pL$ is a
Lagrangian submanifold. The fibres thus define a Lagrangian foliation of $T^*L$
transverse to the zero section. Finally, every 1-form $\alpha$ defines a
\textit{translation} map
\begin{equation}\label{eq:translation}
\tau_\alpha:T^*L\rightarrow T^*L,\ \ \tau_\alpha(x,\eta):=(x,\alpha(x)+\eta).
\end{equation}
If $\alpha$ is closed then this map is a symplectomorphism of
$(T^*L,\hat{\omega})$.

%%%%%%%%%%%%%%%%%%%%%%%%%%%%%%%%%%%%%%%%%%%%%%%%%%%%%%
%%%%%%%%%%%%%%%%%%%%%%%%%%%%%%%%%%%%%%%%%%%%%%%%%%%%%%

\subsection{First case: smooth compact Lagrangian
submanifolds}\label{ss:cptlagdefs}
We can now quote Souriau's result, following Weinstein \cite{weinstein}
Corollary 6.2.

\begin{theorem}\label{th:nbd_weinstein}
Let $(M,\omega)$ be a symplectic manifold. Let $L\subset M$ be a smooth compact
Lagrangian submanifold. Then there exist a neighbourhood $\mathcal{U}$ of the
zero section of $L$ inside its cotangent bundle $T^*L$ and an embedding
$\Phi_L:\mathcal{U}\rightarrow M$ such that $\Phi_{L|L}=Id:L\rightarrow L$ and
$\Phi_L^*\omega=\hat{\omega}$. 
\end{theorem}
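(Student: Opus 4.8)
The plan is to follow the classical two-step strategy: first build, via the tubular neighbourhood theorem, an initial diffeomorphism from a neighbourhood of the zero section of $T^*L$ onto a neighbourhood of $L$ in $M$ which is the identity on $L$ and which matches the two symplectic forms \emph{to first order} along $L$; then correct it to an exact symplectomorphism by a Moser-type deformation argument.

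For the first step I would use the linear-algebraic identification already discussed. Fix a compatible almost complex structure $J$ on $(M,\omega)$; then $Z:=J(TL)$ is a smooth Lagrangian complement to $TL$ in $TM_{|L}$, i.e. $TM_{|L}=TL\oplus Z$, and it coincides with the normal bundle $NL$. Applying the isomorphism of Equation \ref{eq:canonicalisgeneral} fibrewise identifies $Z$ with $T^*L$, hence yields a bundle isomorphism $TM_{|L}\simeq TL\oplus T^*L$ covering the identity of $L$. Under the natural splitting of $T(T^*L)$ along the zero section this is exactly the isomorphism $\gamma$ built from $\hat\omega$, so it intertwines $\omega_{|L}$ with $\hat\omega_{|L}$. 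Choosing any tubular neighbourhood map (for instance the exponential of the metric $\omega(\cdot,J\cdot)$) whose differential along $L$ realizes this identification produces an embedding $\psi$ of a neighbourhood $\mathcal{U}_0$ of the zero section, with $\psi_{|L}=\mathrm{Id}$ and $\psi^*\omega=\hat\omega$ along $L$.

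The second step is the Moser trick. Set $\omega_0:=\hat\omega$, $\omega_1:=\psi^*\omega$ and $\omega_t:=\omega_0+t(\omega_1-\omega_0)$. By construction $\omega_1-\omega_0$ is closed and vanishes along the zero section, so each $\omega_t$ is closed and agrees with $\hat\omega$ on $L$; shrinking $\mathcal{U}_0$ and using compactness of $L$, all $\omega_t$ are nondegenerate on a neighbourhood of $L$ for $t\in[0,1]$. A relative Poincar\'e lemma --- obtained by applying the homotopy operator associated to the fibrewise radial retraction of $T^*L$ onto its zero section --- produces a $1$-form $\sigma$ with $d\sigma=\omega_1-\omega_0$ and $\sigma=0$ along $L$. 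Defining the time-dependent vector field $X_t$ by $i_{X_t}\omega_t=-\sigma$, the vanishing of $\sigma$ on $L$ forces $X_t$ to vanish on $L$; since $L$ is compact the flow $\chi_t$ exists for all $t\in[0,1]$ on a smaller neighbourhood and fixes $L$ pointwise. The identity $\frac{d}{dt}\chi_t^*\omega_t=\chi_t^*(\mathcal{L}_{X_t}\omega_t+\dot\omega_t)=\chi_t^*(-d\sigma+(\omega_1-\omega_0))=0$ gives $\chi_1^*\omega_1=\omega_0$. Setting $\Phi_L:=\psi\circ\chi_1$ on a sufficiently small $\mathcal{U}$ then yields $\Phi_{L|L}=\mathrm{Id}$ and $\Phi_L^*\omega=\chi_1^*\psi^*\omega=\chi_1^*\omega_1=\hat\omega$, as required.

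I expect the genuine content to lie in two places. The first is arranging the initial map $\psi$ so that it matches the forms to first order along $L$: this is exactly where the canonical identification of $NL$ with $T^*L$ via $\omega$ (Equations \ref{eq:canonicallinearcotg}--\ref{eq:canonicalisgeneral}) is essential, and one must check that the construction is smooth in $p\in L$. The second, and the real technical heart, is the relative Poincar\'e lemma producing a primitive $\sigma$ that \emph{vanishes along $L$}: this is what guarantees that the Moser vector field vanishes on $L$, so that its flow fixes $L$ and, by compactness of $L$, exists up to time $1$ on a full neighbourhood --- the two features on which both $\Phi_{L|L}=\mathrm{Id}$ and the very existence of $\Phi_L$ depend.
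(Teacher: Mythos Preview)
Your proof is correct and follows essentially the same two-step strategy as the paper's: build an initial diffeomorphism matching the symplectic forms along $L$ via a smooth choice of Lagrangian complement, then apply Moser's argument with a relative Poincar\'e lemma. The only cosmetic difference is that you obtain the Lagrangian complement concretely via a compatible almost complex structure $J$ (so $Z=J(TL)$), whereas the paper invokes the contractibility of the space of Lagrangian complements to make a smooth choice; you also spell out the Moser step in more detail than the paper, which simply cites it.
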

\begin{proof}
For each $x\in L$, $T_xL$ is a Lagrangian subspace of $T_xM$. The first step is
to choose a Lagrangian complement $Z_x$, so that $T_xM=T_xL\oplus Z_x$. This can
be done smoothly with respect to $x$ using the fact that the space of Lagrangian
complements is a contractible set inside the Grassmannian of $m$-planes in
$T_xM$. As seen following Equation \ref{eq:canonicalisgeneral}, $\omega$ then
provides an isomorphism $\gamma_x:(T_xL\oplus T_x^*L,\hat{\omega})\rightarrow
(T_xM,\omega)$, uniquely defined by the condition that $\gamma_x=Id$ on $T_xL$.
Now choose a diffeomorphism $\Psi_L:\mathcal{U}\rightarrow M$ such that
$(\Psi_L)_*$ extends $\gamma$. By construction, the pull-back form
$(\Psi_L)^*\omega$ coincides with $\hat{\omega}$ at each point of $L$. We now
need to perturb $\Psi_L$ so that the pull-back form coincides with
$\hat{\omega}$ in a neighbourhood of $L$. Set $\omega_0:=\hat{\omega}$ and
$\omega_1:=(\Psi_L)^*\omega$. One can use an argument due to Moser together with
the Poincar\'e Lemma to prove that there exists a diffeomorphism
$k:T^*L\rightarrow T^*L$ such that $k^*\omega_1=\omega_0$ and
$k_{|L}=Id:L\rightarrow L$. Thus $\Phi_L:=\Psi_L\circ k$ has the required
properties. For later use it is also useful to note that, using the same
argument as in \cite{weinstein} Theorem 7.1, one can further show that, at each
$x\in L$, $k_*$ preserves $T_x^*L$. A linear-algebraic argument then shows that
this implies that $k_*=Id$ at each $x\in L$. Thus $(\Phi_L)_*=(\Psi_L)_*$ at
each $x\in L$.
\end{proof}
\begin{remark} Although the statement and proof are for embedded submanifolds it
is not difficult to extend them to immersed compact Lagrangian submanifolds by
working locally. In this case $\Phi_L$ will only be a local embedding. 
\end{remark}
Let $C^\infty(\mathcal{U})$ denote the space of smooth 1-forms on $L$ whose
graph lies in $\mathcal{U}$. Theorem \ref{th:nbd_weinstein} leads immediately to
the following conclusion.
\begin{corollary} \label{cor:nbd_weinstein}
Let $(M,\omega)$ be a symplectic manifold. Let $L\subset M$ be a smooth compact
Lagrangian submanifold. Then $\Phi_L$ defines by composition an injective map 
\begin{equation}\label{eq:cptlagrcorrespondence}
\Phi_L:C^\infty(\mathcal{U})\rightarrow \mbox{Imm}(L,M)/\mbox{Diff}(L).
\end{equation}
A section $\alpha\in C^\infty(\mathcal{U})$ is closed iff the corresponding
(non-parametrized) submanifold $\Phi_L\circ\alpha$ is Lagrangian.
\end{corollary}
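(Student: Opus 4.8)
The plan is to derive the corollary directly from the two defining properties of $\Phi_L$ furnished by Theorem \ref{th:nbd_weinstein} --- that it is an embedding and that $\Phi_L^*\omega=\hat{\omega}$ --- together with the standard facts about $(T^*L,\hat{\omega})$ recalled at the start of Section \ref{s:lagdefs}. First I would check well-definedness. A section $\alpha\in C^\infty(\mathcal{U})$, regarded as a map $\alpha:L\rightarrow T^*L$, is an embedding onto its graph $\Gamma(\alpha)$, which by assumption lies in $\mathcal{U}$. Composing with the embedding $\Phi_L$ thus yields an embedding $\Phi_L\circ\alpha:L\rightarrow M$, in particular an immersion, whose class in $\mbox{Imm}(L,M)/\mbox{Diff}(L)$ defines $\Phi_L(\alpha)$.

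For injectivity, I would argue as follows. If $\Phi_L(\alpha_1)=\Phi_L(\alpha_2)$, then the two unparametrized submanifolds have the same image in $M$, \textit{i.e.} $\Phi_L(\Gamma(\alpha_1))=\Phi_L(\Gamma(\alpha_2))$. Since $\Phi_L$ is injective on $\mathcal{U}$, this forces $\Gamma(\alpha_1)=\Gamma(\alpha_2)$ in $T^*L$; as a $1$-form is determined by its graph, $\alpha_1=\alpha_2$. For the Lagrangian characterization I would compute the pullback of $\omega$ by the immersion $\Phi_L\circ\alpha$, using the normalization $\Phi_L^*\omega=\hat{\omega}$:
$$(\Phi_L\circ\alpha)^*\omega=\alpha^*(\Phi_L^*\omega)=\alpha^*\hat{\omega}.$$
Writing $\hat{\omega}=-d\hat{\lambda}$ and using the standard identity $\alpha^*\hat{\lambda}=\alpha$ for the pullback of the tautological form along a section, this equals $-d\alpha$. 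Hence $\Phi_L\circ\alpha$ is Lagrangian iff $d\alpha=0$, which is exactly the recalled statement that $\Gamma(\alpha)$ is Lagrangian in $(T^*L,\hat{\omega})$ iff $\alpha$ is closed.

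The main point requiring care is the injectivity step: in passing through the quotient by $\mbox{Diff}(L)$, I must use that $\Phi_L$ is a genuine embedding, so that equality of the unparametrized images in $M$ lifts to equality of the graphs in $T^*L$. The remaining steps are essentially bookkeeping built on the two properties of $\Phi_L$. For merely immersed Lagrangians, as in the remark following Theorem \ref{th:nbd_weinstein}, $\Phi_L$ is only a local embedding, and this injectivity argument would instead have to be carried out locally.
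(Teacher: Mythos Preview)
Your proof is correct and is precisely the argument the paper has in mind: the corollary is stated without proof, introduced only by the sentence ``Theorem \ref{th:nbd_weinstein} leads immediately to the following conclusion,'' so you have simply spelled out the immediate verification. Your use of $\Phi_L^*\omega=\hat{\omega}$ together with the recalled fact that $\Gamma(\alpha)$ is Lagrangian in $(T^*L,\hat{\omega})$ iff $\alpha$ is closed (which you further unpack via $\alpha^*\hat{\lambda}=\alpha$) is exactly what is intended, and your care about injectivity through the $\mbox{Diff}(L)$-quotient is appropriate.
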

An important point about the map $\Phi_L$ in Equation
\ref{eq:cptlagrcorrespondence} is that any submanifold which admits a
parametrization which is $C^1$-close to some parametrization of $L$ belongs to
the image of $\Phi_L$, \textit{i.e.} corresponds to a 1-form $\alpha$. 

Let $\mbox{Lag}(L,M)$ denote the set of Lagrangian immersions from $L$ into $M$.
Using Corollary \ref{cor:nbd_weinstein} and the Fr\'echet topology on
$C^\infty(\mathcal{U})$ we can locally define a topology on
$\mbox{Lag}(L,M)/\mbox{Diff}(L)$; one can then check that on the intersection of
any two open sets these topologies coincide, so we obtain a global topology on
$\mbox{Lag}(L,M)/\mbox{Diff}(L)$. The connected component containing the given
$L\subset M$ defines the \textit{moduli space of Lagrangian deformations of
$L$}. Coupling Corollary \ref{cor:nbd_weinstein} with Decomposition
\ref{decomp:closedforms} gives a good idea of the local structure of this space.

%%%%%%%%%%%%%%%%%%%%%%%%%%%%%%%%%%%%%%%%%%%%%%%%%%%%%%%%
%%%%%%%%%%%%%%%%%%%%%%%%%%%%%%%%%%%%%%%%%%%%%%%%%%%%%%%%

\subsection{Second case: Lagrangian cones in $\C^m$}\label{ss:conelagdefs}
Let $\mathcal{C}$ be a Lagrangian cone in $\C^m$ with link $(\Sigma,g')$ and
conical metric $\tg$. The goal of this section is to provide an analogue of the
theory of Section \ref{ss:cptlagdefs} for this specific submanifold, giving a correspondence between closed 1-forms in $C^\infty_{(\mu-1,\lambda-1)}(\Lambda^1)$ and Lagrangian deformations of $\mathcal{C}$ with rate $(\mu,\lambda)$.

Let $\theta$ denote the generic point on $\Sigma$. We will identify
$\Sigma\times (0,\infty)$ with $\mathcal{C}$ via the immersion
\begin{equation}
\iota:\Sigma\times (0,\infty)\rightarrow \C^m, \ \ (\theta,r)\mapsto r\theta.
\end{equation}
\begin{remark}\label{rem:iota_*}
Let $\theta(t)$ be a curve in $\Sigma$ such that $\theta(0)=\theta$. Let $r(t)$
be a curve in $\R^+$ such that $r(0)=r$.
Differentiating $\iota$ at the point $(\theta,r)$ gives identifications
\begin{equation}
\begin{array}{rcl}
\iota_*:T_\theta\Sigma\oplus\R & \rightarrow &
T_{r\theta}\mathcal{C}\subset\C^m\\
(\theta'(0),r'(0)) & \mapsto &
d/dt\left(r(t)\theta(t)\right)_{|t=0}=r'(0)\theta+r\theta'(0)\in\C^m.
\end{array}
\end{equation}
This leads to the general formula $\iota_{*|\theta,r}(v,a)=a\theta+rv$.
\end{remark}
We can build an explicit (local) identification $\Psi_{\mathcal{C}}$ of
$T^*\mathcal{C}$ with $\C^m$ as follows.

Firstly, the metric $\tg$ gives an identification
\begin{equation}\label{eq:cotg=tg}
T^*\mathcal{C}\rightarrow T\mathcal{C},\ \
(\theta,r,\alpha_1+\alpha_2\,dr)\mapsto (\theta,r,r^{-2}A_1+\alpha_2\partial r),
\end{equation}
where $g'(A_1,\cdot)=\alpha_1$ and we use the notation of Section
\ref{ss:closedforms}. Notice that, according to Remark \ref{rem:iota_*}, the
corresponding vector in $\C^m$ is $\iota_*(r^{-2}A_1+\alpha_2\partial
r)=\alpha_2\theta+r^{-1}A_1$. Notice also that Equation \ref{eq:cotg=tg} defines
a fibrewise isometry between vector bundles over $\mathcal{C}$. Let
$\widetilde{\nabla}$ denote the standard connection on the tangent bundle of
$\C^m$. Since $\mathcal{C}$ has the induced metric, the Levi-Civita connection
on $T\mathcal{C}$ coincides with the tangential projection
$\widetilde{\nabla}^T$. Let $T^*\mathcal{C}$ have the induced Levi-Civita
connection. Then Equation \ref{eq:cotg=tg} also defines an isomorphism between
the two connections. 

Secondly, since $\mathcal{C}$ is Lagrangian the complex structure provides an
identification
\begin{equation}\label{eq:tg=normal}
\tilde{J}:T\mathcal{C}\simeq N\mathcal{C}. 
\end{equation}
This is again a fibrewise isometry. The perpendicular component
$\widetilde{\nabla}^\perp$ defines a connection on $N\mathcal{C}$. Since $\C^m$
is K\"ahler, $\widetilde{\nabla} \tilde{J}=\tilde{J}\widetilde{\nabla}$. Thus
$\widetilde{\nabla}^\perp \tilde{J}=\tilde{J}\widetilde{\nabla}^T$, so Equation
\ref{eq:tg=normal} defines an isomorphism between the two connections.

Thirdly, the Riemannian tubular neighbourhood theorem gives an explicit (local)
identification
\begin{equation}\label{eq:normal=ambient}
N\mathcal{C}\rightarrow \C^m,\ \ v\in N_{r\theta}\mathcal{C}\mapsto r\theta+v.
\end{equation}
By composition we now obtain the required identification
\begin{equation}\label{eq:psicone}
\Psi_{\mathcal{C}}:\mathcal{U}\subset T^*\mathcal{C}\rightarrow\C^m,\ \
(\theta,r,\alpha_1+\alpha_2\,dr)\mapsto
r\theta+\tilde{J}(\alpha_2\theta+r^{-1}A_1).
\end{equation}
Now let $\alpha$ be a 1-form on $\mathcal{C}$. Then, under the above
identifications, $(\Psi_{\mathcal{C}}\circ\alpha)-\iota\simeq\alpha$. This shows
that if $\alpha\in C^\infty_{(\mu-1,\lambda-1)}(\mathcal{U})$ for some $\mu>2$,
$\lambda<2$ then $\Psi_{\mathcal{C}}\circ\alpha$ is a CS/AC submanifold in
$\C^m$ asymptotic to $\mathcal{C}$ with rate $(\mu,\lambda)$.
 
Notice also that
\begin{equation}\label{eq:psiequivariant}
\Psi_{\mathcal{C}}(\theta,tr,t^2\alpha_1+t\alpha_2\,dr)=t\Psi_{\mathcal{C}}
(\theta,r,\alpha_1+\alpha_2\,dr). 
\end{equation}
This suggests that we define an action of $\R^+$ on $T^*\mathcal{C}$ as follows:
\begin{equation}\label{eq:action}
\R^+\times T^*\mathcal{C}\rightarrow T^*\mathcal{C},\ \
t\cdot(\theta,r,\alpha_1+\alpha_2\,dr):=(\theta,tr,t^2\alpha_1+t\alpha_2\,dr).
\end{equation}
With respect to this action on $T^*\mathcal{C}$ and the standard action by
dilations on $\C^m$, Equation \ref{eq:psiequivariant} shows that
$\Psi_{\mathcal{C}}$ is an equivariant map. 

\begin{remark} \label{rem:action}
Equation \ref{eq:action} introduces an action on $T^*\mathcal{C}$ which rescales
both the base space and the fibres. We can also obtain it as follows. On any
cotangent bundle $T^*L$ there is a natural action 
\begin{equation*}
 \R^+\times T^*L\rightarrow T^*L,\ \ t\cdot(x,\alpha):=(x,t^2\alpha).
\end{equation*}
The induced action on 1-forms is such that, for the tautological 1-form
$\hat{\lambda}$, $t^*\hat{\lambda}=t^2\hat{\lambda}$. 

When $L=\Sigma\times (0,\infty)$ there is also a natural action
\begin{equation*}
 \R^+\times L\rightarrow L,\ \ t\cdot(\theta,r):=(\theta,tr).
\end{equation*}
This induces an action on $T^*L$ as follows:
\begin{equation*}
 \R^+\times T^*(\Sigma\times (0,\infty))\rightarrow
T^*(\Sigma\times(0,\infty)),\ \
t\cdot(\theta,r,\alpha_1+\alpha_2\,dr):=(\theta,tr,\alpha_1+t^{-1}\alpha_2\,dr).
\end{equation*}
The induced action on 1-forms preserves
$\hat{\lambda}:t^*\hat{\lambda}=\hat{\lambda}$. Equation \ref{eq:action}
coincides with the composed action and thus satisfies
$t^*\hat{\lambda}=t^2\hat{\lambda}$, so $t^*\hat{\omega}=t^2\hat{\omega}$.
\end{remark}

We now want to investigate the symplectic properties of the map
$\Psi_{\mathcal{C}}$. Let $\tilde{\omega}$ denote the standard symplectic
structure on $\C^m$. Since $\mathcal{C}$ is Lagrangian, the fibres of the normal
bundle define (locally) a Lagrangian foliation of $\C^m$, transverse to
$\mathcal{C}$. Using the fact that $\Psi_{\mathcal{C}}$ is the identity on
$\mathcal{C}$ and is linear on each fibre, one can check that, at each point of
$\mathcal{C}$, $(\Psi_{\mathcal{C}})^*\tilde{\omega}=\hat{\omega}$. Notice also
that $\Psi_{\mathcal{C}}$ identifies the foliation of $\C^m$ with the foliation
of $T^*\mathcal{C}$ defined by the fibres.

As in the proof of Theorem \ref{th:nbd_weinstein}, we now want to perturb
$\Psi_{\mathcal{C}}$ so as to obtain a local symplectomorphism
$\mathcal{U}\subset T^*\mathcal{C}\rightarrow \C^m$. As in that case, the idea
is to build a (local)  diffeomorphism $k:T^*\mathcal{C}\rightarrow
T^*\mathcal{C}$ such that $k_*=Id$ at each point of $\mathcal{C}$ and
$k^*(\Psi_{\mathcal{C}})^*\tilde{\omega}=\hat{\omega}$. The construction of such
$k$ is sufficiently explicit in \cite{weinstein} p. 333 to allow us to prove
that $k$ is equivariant with respect to the $\R^+$-action. Furthermore, the fact
that the fibres of $T^*\mathcal{C}$ are Lagrangian for both symplectic forms
implies that $k$ preserves these fibres, see \cite{weinstein} Theorem 7.1 for
details. Now define 
\begin{equation}\label{eq:lagconemap}
\Phi_{\mathcal{C}}:=\Psi_{\mathcal{C}}\circ k:\mathcal{U}\subset
T^*\mathcal{C}\rightarrow \C^m. 
\end{equation}
By construction, $\Phi_{\mathcal{C}}$ satisfies
$(\Phi_{\mathcal{C}})^*\tilde{\omega}=\hat{\omega}$. Furthermore,
$\Phi_{\mathcal{C}}$ is equivariant and its fibrewise linearization at each
$x\in\mathcal{C}$ coincides with $\Psi_{\mathcal{C}}$. Thus
$\Phi_{\mathcal{C}}=\Psi_{\mathcal{C}}+R$, for some $R$ satisfying 
\begin{equation}
|R(\theta,1,\alpha_1,\alpha_2)|=O(|\alpha_1|_{g'}^2+|\alpha_2|^2),\ \ \mbox{ as }|a_1|_{g'}+|a_2|\rightarrow 0. 
\end{equation}
Clearly $R$ is also equivariant. Thus
\begin{equation}\label{eq:Requivariance}
|R(\theta,t,\alpha_1,\alpha_2)|=|R(\theta,t\cdot
1,t^2t^{-2}\alpha_1,tt^{-1}\alpha_2)|=t\cdot
O(t^{-4}|\alpha_1|_{g'}^2+t^{-2}|\alpha_2|^2).
\end{equation}
The equivariance of R can be used to determine its asymptotic
behaviour with respect to $r$ after composition with 1-forms on $\mathcal{C}$. For example, given
any $\mu>2$ and $\lambda<2$, choose $\alpha$ in the space
$C^\infty_{(\mu-1,\lambda-1)}(\mathcal{U})$. Notice that, as
$r\rightarrow\infty$, $r^{-1}|\alpha_1|_{g'}=|\alpha_1|_g=O(r^{\lambda-1})$.
This implies $r^{-4}|\alpha_1|_{g'}^2=O(r^{2\lambda-4})$. Analogously,
$|\alpha_2|=O(r^{\lambda-1})$ so $r^{-2}|\alpha_2|^2=O(r^{2\lambda-4})$.
Equation \ref{eq:Requivariance} then shows that 
$(R\circ\alpha)(\theta,r)=R(\theta,r,\alpha(\theta,r))$ satisfies $|R\circ\alpha|=O(r^{2\lambda-3})$ as $r\rightarrow \infty$. Further calculations show that
the derivatives of $R\circ\alpha$ scale correspondingly, \textit{e.g.} 
\begin{equation}\label{eq:Rscaling}
|(R\circ\alpha)_*(\partial r)|=O(r^{2\lambda-4}),\ \
|(R\circ\alpha)_*(r^{-1}\partial\theta_i)|=O(r^{2\lambda-4}).
\end{equation}
More generally, $|\tnabla^k(R\circ\alpha)|=O(r^{2\lambda-3-k})$. As a result,
\begin{equation*}
 \begin{split}
  |\tnabla^k(\Phi_{\mathcal{C}}\circ\alpha-\iota)| &= |\tnabla^k((\Psi_{\mathcal{C}}+R)\circ\alpha-\iota)|
\leq |\tnabla^k(\Psi_{\mathcal{C}}\circ\alpha-\iota)|+|\tnabla^k(R\circ\alpha)|\\
&= O(r^{\lambda-1-k})+O(r^{2\lambda-3-k})=O(r^{\lambda-1-k}),
 \end{split}
\end{equation*}
where we use $\lambda<2$. This shows that $\Phi_{\mathcal{C}}\circ \alpha$ is a CS/AC
Lagrangian submanifold asymptotic to $\mathcal{C}$ with rate $(\mu,\lambda)$.
Conversely, one can show that any Lagrangian submanifold $L$ of $\C^m$ which
admits a parametrization which is $C^1$-close to $\iota$ and which is asymptotic
to $\iota$ in the sense of Equations \ref{eq:aclagdecay} and \ref{eq:cslagdecay}
corresponds to a closed 1-form $\alpha\in
C^\infty_{(\mu-1,\lambda-1)}(\mathcal{U})$.

In complete analogy with Section \ref{ss:cptlagdefs} we can use
$\Phi_{\mathcal{C}}$ and the closed forms in the space
$C^\infty_{(\mu-1,\lambda-1)}(\mathcal{U})$ to define a topology on the set of
Lagrangian submanifolds which admit a parametrization
$\iota:\Sigma\times\R^+\rightarrow\C^m$ which is asymptotic to $\mathcal{C}$
with rate $(\mu,\lambda)$. The connected component containing $\mathcal{C}$
defines the \textit{moduli space of CS/AC Lagrangian deformations of
$\mathcal{C}$ with rate $(\mu,\lambda)$}. 

We conclude with a last comment on the differential properties of
$\Psi_{\mathcal{C}}$. Recall the following general fact.
\begin{lemma} \label{l:diffvsconn}
Let $E\rightarrow M$ be a vector bundle, endowed with a connection $\nabla$. Let
$\sigma:M\rightarrow E$ be a section of $E$. Choose $v\in T_pM$. The connection
defines a decomposition into ``vertical" and ``horizontal" subspaces 
\begin{equation}
T_{\sigma(p)}E=V_{\sigma(p)}\oplus H_{\sigma(p)}, \mbox{\ \ with\ \
}V_{\sigma(p)}\simeq E_p,\ H_{\sigma(p)}\simeq T_p(M).
\end{equation} 
Under these identifications, $\sigma_*(v)\simeq \nabla_v\sigma+v$. 
\end{lemma}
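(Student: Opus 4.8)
The plan is to reduce the identity to the behaviour of curves and parallel transport, since both the horizontal splitting and the covariant derivative are defined through the \emph{same} connection $\nabla$. First I would pin down the canonical identifications. The vertical space $V_{\sigma(p)}=\ker(\pi_*)_{\sigma(p)}$ is canonically isomorphic to the fibre $E_p$, because the tangent space to a vector space at any of its points is canonically that vector space (here $T_e(E_p)\simeq E_p$). The horizontal space $H_{\sigma(p)}$ is the complement determined by $\nabla$, and $\pi_*$ restricts to an isomorphism $H_{\sigma(p)}\to T_pM$.

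Next I would choose a curve $\gamma(t)$ in $M$ with $\gamma(0)=p$ and $\dot\gamma(0)=v$, and write $\sigma_*(v)=\frac{d}{dt}\big|_{t=0}(\sigma\circ\gamma)$. The horizontal component is immediate: since $\pi\circ\sigma=\mathrm{id}_M$ we have $\pi_*\sigma_*(v)=v$, so under $H_{\sigma(p)}\simeq T_pM$ the horizontal part of $\sigma_*(v)$ is exactly $v$.

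The substantive step is identifying the vertical component with $\nabla_v\sigma$. Let $P_t\colon E_p\to E_{\gamma(t)}$ denote parallel transport along $\gamma$. By definition of the horizontal subbundle, $t\mapsto P_t(\sigma(p))$ is the horizontal lift of $\gamma$ through $\sigma(p)$, so its velocity at $t=0$ is the horizontal lift $v^h\in H_{\sigma(p)}$ of $v$, and the vertical component is the well-defined difference $\mathrm{vert}(\sigma_*(v))=\sigma_*(v)-v^h\in V_{\sigma(p)}$. To read this element in $E_p$ I would compare $\sigma\circ\gamma$ against the parallel frame: the curve $c(t):=P_t^{-1}(\sigma(\gamma(t)))$ lies in the fixed vector space $E_p$ and satisfies $\dot c(0)=\nabla_v\sigma$ by the very definition of the covariant derivative; a direct check shows that $\dot c(0)$ is precisely the image of $\mathrm{vert}(\sigma_*(v))$ under the canonical isomorphism $V_{\sigma(p)}\simeq E_p$. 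Combining the two components gives $\sigma_*(v)\simeq\nabla_v\sigma+v$.

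The hard part is bookkeeping rather than conceptual: one must confirm that the vertical projection along $H_{\sigma(p)}$, read through the canonical identification, agrees with differentiation in the parallel frame. This is most transparently verified in a local trivialization $E|_U\simeq U\times\R^k$: writing $\nabla=d+A$ for the connection matrix of $1$-forms and $\sigma(x)=(x,s(x))$, the horizontal lift of $w$ at $(x,e)$ is $(w,-A(w)e)$, so $\sigma_*(v)=(v,v(s))$ splits as the horizontal $(v,-A(v)s)$ plus the vertical $(0,v(s)+A(v)s)$, and the vertical term read in $E_p$ is exactly $v(s)+A(v)s=\nabla_v\sigma$. This confirms the claimed identity and completes the argument.
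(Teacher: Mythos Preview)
Your proof is correct. The paper itself does not prove this lemma: it is introduced with ``Recall the following general fact'' and then applied immediately, so there is no argument to compare against. Your approach---computing the horizontal component via $\pi_*\sigma_*=\mathrm{id}$ and the vertical component by differentiating in a parallel frame, then double-checking in a local trivialization with $\nabla=d+A$---is the standard verification and is carried out cleanly.
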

We can apply Lemma \ref{l:diffvsconn} as follows. Let $\alpha$ be a section of
$T^*\mathcal{C}$ so that $\Psi_{\mathcal{C}}\circ\alpha:\mathcal{C}\rightarrow
\C^m$ is a submanifold of $\C^m$. Choose $v\in T_{r\theta}\mathcal{C}$. Then,
using the identifications \ref{eq:cotg=tg}, \ref{eq:tg=normal},
\ref{eq:normal=ambient} and Lemma \ref{l:diffvsconn}, 
\begin{equation}\label{eq:psi_*}
(\Psi_{\mathcal{C}}\circ\alpha)_*(v)\simeq \tnabla_v \alpha+v,
\end{equation}
where $\tnabla$ denotes the Levi-Civita connection on $T^*\mathcal{C}$.

%%%%%%%%%%%%%%%%%%%%%%%%%%%%%%%%%%%%%%%%%%%%%%%%%
%%%%%%%%%%%%%%%%%%%%%%%%%%%%%%%%%%%%%%%%%%%%%%%%%

\subsection{Third case: CS/AC Lagrangian submanifolds in $\C^m$}\label{ss:accslagdefs}

Let $\iota:L\rightarrow\C^m$ be an AC Lagrangian submanifold with rate
$\boldsymbol{\lambda}$, centers $p_i$ and ends $S_i$. Using the notation of Section
\ref{ss:conelagdefs}, 
the map $\Phi_{\mathcal{C}_i}+p_i:T^*\mathcal{C}_i\rightarrow\C^m$ identifies $\iota(S_i)\subset\C^m$ with the graph $\Gamma(\alpha_i)$ of some closed 1-form $\alpha_i$. This construction also determines a distinguished coordinate
system $\phi_i$ by imposing the relation
\begin{equation*}
 \phi_i:\mathcal{C}_i\rightarrow S_i, \ \ \iota\circ\phi_i=\Phi_{\mathcal{C}_i}\circ\alpha_i. 
\end{equation*}
Letting $(d\phi_i)^*:T^*S_i\rightarrow T^*\mathcal{C}_i$
denote the corresponding identification of cotangent bundles, we obtain an
identification of the zero section $\mathcal{C}_i$ with the zero section $S_i$.
We can use the symplectomorphism $\tau_{\alpha_i}$ defined in Equation
\ref{eq:translation} to ``bridge the gap'' between these identifications,
obtaining a symplectomorphism
\begin{equation}
\Phi_{S_i}:\mathcal{U}_i\subset T^*S_i\rightarrow \C^m, \ \
\Phi_{S_i}:=\Phi_{\mathcal{C}_i}\circ\tau_{\alpha_i}\circ (d\phi_i)^*+p_i
\end{equation}
which restricts to the identity on $S_i$. These maps provide a Lagrangian
neighbourhood for each end of $L$. Using the same methods as in the proof of Theorem
\ref{th:nbd_weinstein} one can interpolate between these maps. The final result
is a symplectomorphism
\begin{equation}\label{eq:aclagmap}
\Phi_L:\mathcal{U}\subset T^*L\rightarrow \C^m
\end{equation}
which restricts to the identity along $L$. This allows us to parametrize AC
deformations of $L$ with rate $\boldsymbol{\lambda}$ in terms of closed 1-forms
in the space $C^\infty_{\boldsymbol{\lambda}-1}(\mathcal{U})$.

More generally, given a CS or CS/AC Lagrangian submanifold $L$ in $\C^m$, the same
ideas define a symplectomorphism $\Phi_{L}$ as in Equation \ref{eq:aclagmap}. The same is true for a CS
submanifold in $M$: this time it is necessary to insert appropriate compositions
by $\Upsilon_i$. We refer to Joyce \cite{joyce:I} for additional details
concerning constructions of this type. 

Coupling these results with Decompositions \ref{decomp:closedforms_growth},
\ref{decomp:closedforms_decay} and \ref{decomp:closedforms_accs} now gives a
good idea of the local structure of the corresponding moduli spaces of
Lagrangian deformations, defined as in Sections \ref{ss:cptlagdefs} and
\ref{ss:conelagdefs}.

%%%%%%%%%%%%%%%%%%%%%%%%%%%%%%%%%%%%%%%%%%%%%%%%%%%%
%%%%%%%%%%%%%%%%%%%%%%%%%%%%%%%%%%%%%%%%%%%%%%%%%%%%

\subsection{Lagrangian deformations with moving
singularities}\label{ss:movingsings}

In Section \ref{ss:accslagdefs} the given Lagrangian submanifold $L$ is deformed
keeping the singular points fixed in the ambient manifold $\C^m$ or $M$. It is
also natural to want to deform $L$ allowing the singular points to move within
the ambient space. Analogously, one might want to allow the corresponding
Lagrangian cones $\cone_i$ to rotate in $\C^m$. The correct set-up for doing
this when $\iota:L\rightarrow M$ is a CS Lagrangian submanifold with singularities
$\{x_1,\dots,x_s\}$ and identifications $\upsilon_i$ is as follows. The ideas are based on \cite{joyce:II}
Section 5.1. Define
\begin{equation}
P:=\{(p,\upsilon):p\in M,\ \upsilon:\C^m\rightarrow T_pM \mbox{ such that
}\upsilon^*\omega=\tilde{\omega}, \upsilon^*J=\tilde{J}\}.
\end{equation}
$P$ is a $\unitary m$-principal fibre bundle over $M$ with the action 
$$\unitary m\times P\rightarrow P,\ \ M\cdot (p,\upsilon):=(p,\upsilon\circ
M^{-1}).$$
As such, $P$ is a smooth manifold of dimension $m^2+2m$.

Our aim is to use one copy of $P$ to parametrize the location of each singular
point $p_i=\iota(x_i)\in M$ and the direction of the corresponding cone $\cone_i\subset
\C^m$: the group action will allow the cone to rotate leaving the singular point
fixed. As we are interested only in small deformations of $L$ we can restrict
our attention to a small open neighbourhood of the pair $(p_i,\upsilon_i)\in P$. In general the
$\cone_i$ will have some symmetry group $G_i\subset \unitary m$, \textit{i.e.}
the action of this $G_i$ will leave the cone fixed. To ensure that we have no
redundant parameters we must therefore further restrict our attention to a
\textit{slice} of our open neighbourhood, \textit{i.e.} a smooth submanifold
transverse to the orbits of $G_i$. We denote this slice $\E_i$: it is a subset
of $P$ containing $(p_i,\upsilon_i)$ and of dimension $m^2+2m-\mbox{dim}(G_i)$.
We then set $\E:=\E_1\times\dots\times\E_s$. The point
$e:=(p_1,\upsilon_1),\dots,(p_s,\upsilon_s))\in \E$ will denote the initial
data as in Definition \ref{def:generalcslagsub}.

We now want to extend the datum of $(L,\iota)$ to a family of Lagrangian submanifolds
$(L,\iota_{\tilde{e}})$ parametrized by
$\tilde{e}=((\tilde{p}_1,\tilde{\upsilon}_1),\dots,(\tilde{p}_s,\tilde{\upsilon}_s))\in
\E$ (making $\E$ smaller if necessary). Each $(L,\iota_{\tilde{e}})$ should satisfy $\iota_{\tilde{e}}(p_i)=\tilde{p_i}$ and admit identifications $\tilde{\upsilon}_i$ and cones $\mathcal{C}_i$
as in Definition \ref{def:generalcslagsub}. We further require that $\iota_e=\iota$ globally and that $\iota_{\tilde{e}}=\iota$ outside a neighbourhood of the singularities.
The construction of such a family is actually straight-forward: using the maps
$\Upsilon_i$, it reduces to a choice of an appropriate family of
compactly-supported symplectomorphisms of $\C^m$.

It is now possible to choose an open neighbourhood $\mathcal{U}\subset T^*L$ and
embeddings $\Phi_{L}^{\tilde{e}}:\mathcal{U}\rightarrow M$ which, away from the
singularities, coincide with the embedding $\Phi_{L}$ introduced in Section
\ref{ss:accslagdefs}. The final result is that, after such a choice, the
\textit{moduli space of CS Lagrangian deformations of $L$ with rate
$\boldsymbol{\mu}$ and moving singularities} can be parametrized in terms of
pairs $(\tilde{e}, \alpha)$ where $\tilde{e}\in\E$ and $\alpha$ is a closed 1-form
on $L$ belonging to the space $C^\infty_{\boldsymbol{\mu}-1}(\mathcal{U})$. 

Analogous results hold of course for CS and CS/AC submanifolds in $\C^m$. In
this case it is sufficient to set $P:=\{(p,\upsilon)\}$, with $p\in\C^m$ and
$\upsilon\in \unitary m$.

\subsection{Other convergence rates}\label{ss:otherrates}
The previous sections discuss the deformation theory of a Lagrangian conifold $(L,\iota)$ with convergence rate $(\boldsymbol{\mu}, \boldsymbol{\lambda})$ within the class of deformations which preserve the convergence rate. For some purposes, cf. \cite{pacini:slgluing}, it may also be useful to consider other deformation classes, obtained via closed 1-forms in the space $C^\infty_{\boldsymbol{\beta}-1}(\mathcal{U})$, for some other weight $\boldsymbol{\beta}$. We consider here two cases.

The first case is when $2<\beta_i\leq \mu_i$ on each CS end and $2>\beta_i\geq\lambda_i$ on each AC end: in other words, we relax the convergence rate of the deformed submanifolds. This case is simple: the initial conifold has \textit{a fortiori} convergence rate $\boldsymbol{\beta}$, so the above theory immediately shows that the closed forms in $C^\infty_{\boldsymbol{\beta}-1}(\mathcal{U})$ parametrize all other Lagrangian conifolds with this rate. 

The second case is when $2<\mu_i<\beta_i$ on each CS end and $2>\lambda_i>\beta_i$ on each AC end: in other words, we strengthen the convergence rate. In this case the closed 1-forms in $C^\infty_{\boldsymbol{\beta}-1}(\mathcal{U})$ parametrize the Lagrangian immersions $(L,\iota')$ which are asymptotic to $(L,\iota)$ in a sense analogous to Definitions \ref{def:aclagsub}, \ref{def:cslagsub}: on each AC end, up to appropriate diffeomorphisms $\phi_i$ and for $r\rightarrow\infty$,
\begin{equation*}\label{eq:generalizedaclagdecay}
|\tnabla^k(\iota'-\iota)|=O(r^{\beta_i-1-k})
\end{equation*}
and on each CS end, for $r\rightarrow 0$,
\begin{equation*}\label{eq:generalizedaclagdecay}
|\tnabla^k(\iota'-\iota)|=O(r^{\beta_i-1-k}).
\end{equation*}
To prove this, as in Section \ref{ss:accslagdefs}, assume $\iota$ is obtained as the graph of a 1-form $\alpha$ so that on each end $\iota=\Phi_{\mathcal{C}}\circ\alpha$ and $\Phi_L=\Phi_{\mathcal{C}}\circ\tau_\alpha$. Choose a closed 1-form $\alpha'\in C^\infty_{\boldsymbol{\beta}-1}(\mathcal{U})$ and set $\iota':=\Phi_L\circ\alpha'=\Phi_{\mathcal{C}}\circ(\alpha+\alpha')$. Then
\begin{equation*}
\begin{split}
 |\iota'-\iota|&=|\Phi_\mathcal{C}\circ(\alpha+\alpha')-\Phi_{\mathcal{C}}\circ\alpha|=|(\Psi_{\mathcal{C}}+R)\circ(\alpha+\alpha')-(\Psi_{\mathcal{C}}+R)\circ\alpha|\\
&\leq|\Psi_{\mathcal{C}}\circ\alpha'|+|R\circ(\alpha+\alpha')-R\circ\alpha|\\
&=O(r^{\beta-1})+O(r^{\beta-2+\lambda-2+1})=O(r^{\beta-1}),\\
\end{split}
\end{equation*}
 where we use the fact that $R(\theta,1,\cdot)$ is roughly quadratic in the $\cdot$ variable, so 
\begin{equation*}
|R(\theta,1,\alpha'+\alpha)-R(\theta,1,\alpha)|=O(|\alpha'|\cdot|\alpha|).
\end{equation*}
 We then conclude via the reasoning already described following Equation \ref{eq:Requivariance}. 

Similar calculations give estimates on the derivatives.

%%%%%%%%%%%%%%%%%%%%%%%%%%%%%%%%%%%%%%%%%%%%%%%%%%%%%%%%%%%
%%%%%%%%%%%%%%%%%%%%%%%%%%%%%%%%%%%%%%%%%%%%%%%%%%%%%%%%%%%

\section{Special Lagrangian conifolds}\label{s:slgeometry}
\begin{definition}\label{def:cy} A \textit{Calabi-Yau} (CY) manifold is the data
of a K\"ahler manifold ($M^{2m}$,$g$,$J$,$\omega$) and a non-zero ($m,0$)-form
$\Omega$ satisfying $\nabla\Omega\equiv 0$ and normalized by the condition
$\omega^m/m!=(-1)^{m(m-1)/2}(i/2)^m\Omega\wedge\bar{\Omega}$.

In particular $\Omega$ is holomorphic and the holonomy of $(M,g)$ is contained
in $\sunitary m$. We will refer to $\Omega$ as the \textit{holomorphic volume
form} on $M$.
\end{definition}

\begin{example}\label{e:C^m}
The simplest example of a CY manifold is $\C^m$ with its standard structures
$\tilde{g}$, $\tilde{J}$, $\tilde{\omega}$ and
$\tilde{\Omega}:=dz^1\wedge\dots\wedge dz^m$.
\end{example}
\begin{definition}\label{def:sl}
Let $M^{2m}$ be a CY manifold and $L^m\rightarrow M$ be an immersed or embedded
Lagrangian submanifold. We can restrict $\Omega$ to $L$, obtaining a
non-vanishing complex-valued $m$-form $\Omega_{|L}$ on $L$. We say that $L$ is
\textit{special Lagrangian} (SL) iff this form is real, \textit{i.e.}
$\Imag\,\Omega_{|L}\equiv 0$. In this case $\Real\,\Omega_{|L}$ defines a volume
form on $L$, thus a natural orientation.
\end{definition}
Lagrangian submanifolds (especially the immersed ones) tend to be very ``soft"
objects: for example, Section \ref{s:lagdefs} shows that they have
infinite-dimensional moduli spaces. They also easily allow for cutting, pasting
and desingularization procedures. The ``special" condition rigidifies them
considerably: the corresponding deformation, gluing and desingularization
processes require much ``harder" techniques. Cf. \textit{e.g.}
\cite{haskinskapouleas}, \cite{joyce:III}, \cite{joyce:IV}, \cite{pacini:slgluing} for recent gluing
results and \cite{haskinspacini} for local desingularization issues.

\begin{definition} \label{def:accssl}
We can define AC, CS and CS/AC special Lagrangian submanifolds in $\C^m$ exactly
as in Definitions \ref{def:aclagsub}, \ref{def:cslagsub} and
\ref{def:accslagsub}, simply adding the requirement that the submanifolds be
special Lagrangian. In particular this implies that the cones $\mathcal{C}_i$
are SL in $\C^m$. Following Definition \ref{def:generalcslagsub} we can also
define CS special Lagrangian submanifolds in a general CY manifold $M$: in this
case it is necessary to also add the requirement that
$\upsilon_i^*\Omega=\tilde{\Omega}$.

We use the generic term \textit{special Lagrangian conifold} to refer to any of
the above.
\end{definition}

\begin{remark} \label{rem:muregularity} 
It follows from Joyce \cite{joyce:I} Theorem 5.5 that if $L$ is a CS or CS/AC SL
submanifold with respect to some rate $\boldsymbol{\mu}=2+\epsilon$ with
$\epsilon$ in a certain range $(0,\epsilon_0)$ then it is also CS or CS/AC with
respect to any other rate of the form $\boldsymbol{\mu}'=2+\epsilon'$ with
$\epsilon'\in (0,\epsilon_0)$. The precise value of $\epsilon_0$ is determined
by certain \textit{exceptional weights} for the cones $\mathcal{C}_i$,
introduced in Section \ref{s:reviewlaplace}. We refer to \cite{joyce:I} for
details.
\end{remark}
  
\begin{example} \label{e:slcone}
Let $\mathcal{C}$ be a Lagrangian cone in $\C^m$ with smooth link
$\Sigma^{m-1}$. It can be shown that $\mathcal{C}$ is SL (with respect to some
holomorphic volume form $e^{i\theta}\tilde{\Omega}$) iff $\Sigma$ is minimal in
$\Sph^{2m-1}$ with respect to the natural metric on the sphere. Then
$\mathcal{C}$ is a CS/AC SL in $\C^m$. Cf. \textit{e.g.} \cite{harveylawson},
\cite{haskins}, \cite{haskinskapouleas}, \cite{haskinspacini}, \cite{joyce:symmetries} for examples.

We refer to Joyce \cite{joyce:V} Section 6.4 for examples of AC SLs in $\C^m$
with various rates.
\end{example}

%%%%%%%%%%%%%%%%%%%%%%%%%%%%%%%%%%%%%%%%%%%%%%%%%%%%%%%%%%%%%%%%%%%%%%%%%%%%%%%
%%%%%%%%%%%%%%%%%%%%%%%%%%%%%%%%%%%%%%%%%%%%%%%%%%%%%%%%%%%%%%%%%%%%%%%%%%%%%%%

\section{Setting up the SL deformation problem}\label{s:setup}
If $\iota:L\rightarrow M$ is a SL conifold we can specialize the framework of Section
\ref{s:lagdefs} to study the SL deformations of $L$. Notice that the SL
condition is again invariant under reparametrizations. Thus, if $L$ is smooth
and compact, the \textit{moduli space} $\mathcal{M}_L$ \textit{of SL
deformations of $L$} can be defined as the connected component containing $L$ of
the subset of SL submanifolds in $\mbox{Lag}(L,M)/\mbox{Diff}(L)$. As seen in
Sections \ref{ss:conelagdefs} and \ref{ss:accslagdefs}, if $L$ is an AC, CS or
CS/AC Lagrangian submanifold with specific rates of growth/decay on the ends, we can obtain
moduli spaces of Lagrangian or SL deformations of $L$ with those same rates by
simply restricting our attention to closed 1-forms on $L$ which satisfy
corresponding growth/decay conditions. 

Our ultimate goal is to prove that moduli spaces of SL conifolds often admit a
natural smooth structure with respect to which they are finite-dimensional
manifolds. Failing this, we want to identify the obstructions which prevent this
from happening. Generally speaking, the strategy for proving these results will
be to view $\mathcal{M}_L$ locally as the zero set of some smooth map $F$
defined on the space of closed forms in $C^\infty(\mathcal{U})$ (when $L$ is
smooth and compact) or in
$C^\infty_{(\boldsymbol{\mu}-1,\boldsymbol{\lambda}-1)}(\mathcal{U})$ (when $L$
is CS/AC with rate $(\boldsymbol{\mu},\boldsymbol{\lambda})$): we can then
attempt to use the Implicit Function Theorem to prove that this zero set
is smooth. 

The choice of $F$ is dictated by Definition \ref{def:sl}. Let $\Omega$ denote
the given holomorphic volume form on $M$. Then $F$ must compute the values
of $\Imag\,\Omega$ on each Lagrangian deformation of $L$. In the following
sections we present the precise construction of $F$ and study its properties,
for each case of interest.

\textit{Note: }To simplify the notation, from now on we will drop the immersion $\iota:L\rightarrow M$ and simply identify $L$ with its image. In particular we will identify the singularities $x_i$ with their images $\iota(x_i)$.

%%%%%%%%%%%%%%%%%%%%%%%%%%%%%%%%%%%%%%%%%%%%%%%%%%%%%%%%%%%%%%%%%%%%%%
%%%%%%%%%%%%%%%%%%%%%%%%%%%%%%%%%%%%%%%%%%%%%%%%%%%%%%%%%%%%%%%%%%%%%%

\subsection{First case: smooth compact special Lagrangians}\label{ss:cptslsetup}

Let $L\subset M$ be a smooth compact SL submanifold, endowed with the induced
metric $g$ and orientation. Define $\Phi_L:\mathcal{U}\rightarrow M$ as in
Section \ref{ss:cptlagdefs}. Consider the pull-back real $m$-form
$\Phi_L^*(\Imag\,\Omega)$ defined on $\mathcal{U}$. Given any closed $\alpha\in
C^\infty(\mathcal{U})$, let $\Gamma(\alpha)$ denote the submanifold in
$\mathcal{U}$ defined by its graph. It is diffeomorphic to $L$ via the
projection $\pi:T^*L\rightarrow L$. The pull-back form restricts to an $m$-form
$\Phi_L^*(\Imag\,\Omega)_{|\Gamma(\alpha)}$ on $\Gamma(\alpha)$. It is clear
from Definition \ref{def:sl} that $\Gamma(\alpha)$ is SL iff this form vanishes.
We can now pull this form back to $L$ via $\alpha$ (equivalently, push it down to $L$ via $\pi_*$), obtaining a real $m$-form on
$L$: then $\Gamma(\alpha)$ is SL iff this
form vanishes on $L$. Finally, let $\star$ denote the \textit{Hodge
star operator} defined on $L$ by $g$ and the orientation. Using this operator we
can reduce any $m$-form on $L$ to a function. 

Summarizing, let $\mathcal{D}_L$ denote the space of closed 1-forms on $L$ whose graph lies
in $\mathcal{U}$. We then define the map $F$ as follows. 
\begin{equation}\label{eq:defF}
F:\mathcal{D}_L\rightarrow C^\infty (L),\ \ \alpha\mapsto
\star(\alpha^*(\Phi_L^*\Imag\,\Omega))=\star((\Phi_L\circ\alpha)^*\Imag\,\Omega).
\end{equation}
\begin{prop} \label{prop:cptnonlinear}
The non-linear map $F$ has the following properties:
\begin{enumerate}
\item The set $F^{-1}(0)$ parametrizes the space of all SL deformations of $L$
which are $C^1$-close to $L$.
\item $F$ is a smooth map between Fr\'echet spaces.  Furthermore, for each
$\alpha\in\mathcal{D}_L$, $\int_LF(\alpha)\,vol_g=0$.
\item The linearization $dF[0]$ of $F$ at $0$ coincides with the operator $d^*$,
\textit{i.e.} 
\begin{equation}\label{eq:linearization}
dF[0](\alpha)=d^*\alpha.
\end{equation}
\end{enumerate}
\end{prop}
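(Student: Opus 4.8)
The plan is to analyze the three claims of Proposition \ref{prop:cptnonlinear} in turn, with most of the work going into the linearization computation in part (3). For part (1), I would observe that this is essentially a restatement of Corollary \ref{cor:nbd_weinstein} combined with Definition \ref{def:sl}: the map $\Phi_L$ sets up a bijection between closed $1$-forms $\alpha\in\mathcal{D}_L$ and Lagrangian deformations of $L$ that are $C^1$-close, and by construction $F(\alpha)=\star((\Phi_L\circ\alpha)^*\imag\,\Omega)$ vanishes identically iff $\imag\,\Omega$ restricts to zero on the deformed submanifold, i.e.\ iff that submanifold is SL. So $F^{-1}(0)$ parametrizes precisely the nearby SL deformations.

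For part (2), smoothness of $F$ between the relevant Fr\'echet spaces follows because $F$ is built by composing $\alpha$ with the fixed smooth map $\Phi_L$, pulling back a fixed smooth form, and applying $\star$; each operation is smooth in the Fr\'echet sense on the space of sections. The integral identity $\int_L F(\alpha)\,\vol_g=0$ I would prove cohomologically: the $m$-form $(\Phi_L\circ\alpha)^*\imag\,\Omega$ on $L$ equals $\imag\bigl((\Phi_L\circ\alpha)^*\Omega\bigr)$, and its integral computes $\imag$ of the period of $\Omega$ over the homology class $[L]$. Since $\Omega$ is closed and the deformed submanifold is homologous to the original SL $L$ (the deformation is through a continuous family, so the class $[L]$ is unchanged), Stokes' theorem gives that this period equals the period over the original $L$, which is real because $L$ is SL. Hence the imaginary part, and thus the integral, vanishes for every $\alpha$.

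The main obstacle is part (3), the identification $dF[0](\alpha)=d^*\alpha$. Here I would differentiate $F$ at $\alpha=0$. The key inputs are that $\Phi_L$ restricts to the identity on $L$ and, crucially, that $(\Phi_L)_*=(\Psi_L)_*$ along $L$ (noted at the end of the proof of Theorem \ref{th:nbd_weinstein}), so the fibrewise linearization is governed by the canonical identification $\gamma$ of $T^*L$ with a Lagrangian complement via $\omega$ and $J$. Using Lemma \ref{l:diffvsconn} in the form of Equation \ref{eq:psi_*}, the linearized graph map sends $v\mapsto \tnabla_v\alpha + v$ under the identification of the normal bundle $NL\simeq T^*L$. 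The computation then reduces to evaluating $\frac{d}{dt}\big|_{t=0}(\Phi_L\circ t\alpha)^*\imag\,\Omega$. Since $\imag\,\Omega$ vanishes on $L$ itself (as $L$ is SL), only the first-order variation survives, and the standard first-variation formula for the calibration $\imag\,\Omega$ along a normal vector field $V=J\alpha^\sharp$ expresses this derivative as $d\iota_V(\imag\,\Omega)$ restricted to $L$. The McLean computation shows $\imag\,\Omega_{|L}$ and $\omega_{|L}$ are related so that the normal variation corresponding to the $1$-form $\alpha$ produces exactly $-d\star\alpha$ or $d^*\alpha$ after applying $\star$; I would carry out this pointwise tensorial identity carefully, tracking the Hodge star and the sign conventions, to land on $dF[0](\alpha)=d^*\alpha$.

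The delicate point throughout is bookkeeping of the identifications $T^*L\simeq NL$ via $g$ and $J$ and the resulting conversion between the exterior derivative of the pulled-back $(m-1)$-form and the codifferential $d^*$; I expect this sign-and-normalization tracking, rather than any conceptual difficulty, to be where care is most needed.
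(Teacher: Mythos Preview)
Your proposal is correct and follows essentially the same route as the paper: part (1) via Corollary \ref{cor:nbd_weinstein} and Definition \ref{def:sl}, part (2) via the cohomological argument that $\Gamma(\alpha)$ is homologous to $L$ and $\Imag\,\Omega$ is closed, and part (3) via the first-variation/Cartan formula $(\mathcal{L}_v\Imag\,\Omega)_{|L}=(di_v\Imag\,\Omega)_{|L}$ followed by the pointwise identity $(i_v\Imag\,\Omega)_{|L}=-\star\alpha$. The only notable difference is that where you defer the pointwise identity to ``the McLean computation'', the paper carries it out explicitly by using the transitivity of $\sunitary m$ on SL $m$-planes and the $\sorth m$ isotropy to reduce to the case $T_xL=\R^m$, $v=\partial y^1$, where the identity is immediate from the expansion of $\Imag\,\tilde{\Omega}$.
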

\begin{proof} These results are standard, cf. \cite{mclean} or \cite{joyce:I}
Prop. 2.10. However for the reader's convenience we give a sketch of the
argument with respect to our own set of conventions. To simplify the notation we
identify $\mathcal{U}$ with its image in $M$ via $\Phi_L$. This allows us to
write 
\begin{equation}
F(\alpha)=\star(\pi_*(\Imag\,\Omega_{|\Gamma(\alpha)})).
\end{equation} 
We also identify $L$ with the zero section in $T^*L$. 

The first statement follows directly from the definition of $F$ and the results
of Section \ref{ss:cptlagdefs}. More precisely the statement is that, up to
composition with $\Phi_L$, $F^{-1}(0)$ coincides with the set of SL submanifolds
which admit a parametrization which is $C^1$-close to some parametrization of
$L$.

To prove the second statement, notice that
$\int_LF(\alpha)\,vol_g=\int_{\Gamma(\alpha)}\Imag\,\Omega$. The fact that
$\Omega$ is closed implies that $\Imag\,\Omega$ is closed. Furthermore the
submanifold $\Gamma(\alpha)$ is homotopic, thus homologous, to the zero section
$L$. Thus $\int_{\Gamma(\alpha)}\Imag\,\Omega=\int_L\Imag\,\Omega=0$ because $L$
is SL. The smoothness of $F$ is clear from its definition. 

To prove Equation \ref{eq:linearization}, fix any $\alpha\in \Lambda^1(L)$ and
let $v$ denote the normal vector field along $L$ determined by imposing
$\alpha(\cdot)\equiv\omega(v,\cdot)$. We can extend $v$ to a global vector field
$v$ on $M$. Let $\phi_s$ denote any 1-parameter family of diffeomorphisms of $M$
such that $d/ds(\phi_s(x))_{|s=0}=v(x)$. Then the two 1-parameter families of
$m$-forms on $L$, $(s\alpha)^*(\Imag\,\Omega)=\pi_*(\Imag \,\Omega_{|\Gamma(s\alpha)})$ and
$(\phi_s^*\Imag\,\Omega)_{|L}$, coincide up to first order so that standard
calculus of Lie derivatives shows that
\begin{eqnarray*}
dF[0](\alpha)\,vol_g &=& d/ds
(F(s\alpha)\,vol_g)_{|s=0}\\
&=& d/ds(\phi_s^*\Imag\,\Omega)_{|L;\,s=0}\\
&=& (\mathcal{L}_v\Imag\,\Omega)_{|L}= (di_v\Imag\,\Omega)_{|L},
\end{eqnarray*}
where in the last equality we use \textit{Cartan's formula}
$\mathcal{L}_v=di_v+i_vd$ and the fact that $\Imag\,\Omega$ is closed.

We now claim that $(i_v\Imag\,\Omega)_{|L}\equiv-\star\alpha$ on $L$. This is a
linear algebra statement so we can check it point by point. We can also assume
that $v$ is a unit vector at that point. Fix a point $x\in L$ and an isomorphism
$T_xM\simeq \C^m$ identifying the CY structures on $T_xM$ with the standard
structures on $\C^m$. This map will identify $T_xL$ with a SL $m$-plane $\Pi$ in
$\C^m$. Consider the action of $\sunitary m$ on the Grassmannian of $m$-planes
in $\C^m$. In \cite{harveylawson} page 89 it is shown that $\sunitary m$ acts
transitively on the subset of SL $m$-planes and that the isotropy subgroup
corresponding to the distinguished SL plane $\R^m:=\mbox{span}\{\partial
x^1,\dots,\partial x^m\}$ is $\sorth m\subset \sunitary m$; in other words, the
set of SL $m$-planes in $\C^m$ can be identified with the homogeneous space
$\sunitary m/\sorth m$. Up to a rotation in $\sunitary m$ we can assume that
$\Pi=\R^m$. Up to a rotation in $\sorth m$ we can further assume that
$v(x)=\partial y^1$. It is thus sufficient to check our claim in this case only.
We can write $\Imag\,\Omega=dy^1\wedge dx^2\wedge\dots\wedge dx^m+(\dots)$. It
follows that $(i_v\Imag\,\Omega)_{|\R^m}=dx^2\wedge\dots\wedge dx^m$. On the
other hand $\alpha=-dx^1$, proving the claim, thus Equation
\ref{eq:linearization}.
\end{proof}

\begin{remark} \label{rem:cptQ}
Notice that $\star$ depends on $x\in L$, $\Gamma(\alpha)$ depends on $\alpha$
and $\Phi_L^*\Imag\,\Omega_{|\Gamma(\alpha)}$ depends on $\alpha$ and
$\nabla\alpha$. We can thus think of $F$ as being obtained from an underlying
smooth function
\begin{equation}
F'=F'(x,y,z):\mathcal{U}\oplus(T^*L\otimes T^*L)\rightarrow \R
\end{equation}
via the following relationship: 
\begin{equation}\label{eq:F_unwrapped}
F(\alpha)=F'(x,\alpha(x),\nabla\alpha(x)).
\end{equation}
More specifically, $F'$ can be defined as follows. Choose a point $(x,y)\in
\mathcal{U}$. Let $e_1,\dots,e_m$ be an orthonormal positive basis of $T_xL$.
Now choose any $z\in T_x^*L\otimes T_x^*L$. Recall from Lemma \ref{l:diffvsconn}
that, using the Levi-Civita connection, $T_{(x,y)}\mathcal{U}\simeq T_x^*L\oplus
T_xL$. Thus the vectors $(i_{e_i}z,e_i)$ span an $m$-plane in
$T_{(x,y)}\mathcal{U}$; when $y=\alpha$ and $z=\nabla\alpha$, this $m$-plane
coincides with $T_{(x,\alpha)}\Gamma(\alpha)$. We can now define 
\begin{equation}\label{eq:F'}
F'(x,y,z):=\Phi_L^*\Imag\,\Omega_{|(x,y)}((i_{e_1}z,e_1),\dots,(i_{e_m}z,
e_m)).
\end{equation}
For any fixed $x\in L$, $y$ and $z$ vary in the linear space
$T^*_xL\oplus(T^*_xL\otimes T^*_xL)$ so Taylor's theorem shows
\begin{equation}
F'(x,y,z)=F'(x,0,0)+\frac{\partial F'}{\partial
y}(x,0,0)\,y+\frac{\partial F'}{\partial z}(x,0,0)\,z+Q'(x,y,z)
\end{equation}
for some smooth $Q'=Q'(x,y,z)$ satisfying
$Q'(x,y,z)=O(|y|^2+|z|^2)$ for each $x$, as $|y|\rightarrow 0$ and
$|z|\rightarrow 0$. By substitution we find
\begin{eqnarray*}
F(\alpha) &=& F'(x,\alpha(x),\nabla\alpha(x))\\
&=&F'(x,0,0)+\frac{\partial F'}{\partial
y}(x,0,0)\,\alpha(x)+\frac{\partial F'}{\partial
z}(x,0,0)\,\nabla\alpha(x)+Q'(x,\alpha(x),\nabla\alpha(x)).
\end{eqnarray*}
The fact that $L$ is SL implies that $F'(x,0,0)\equiv 0$. Notice also that
by the chain rule
\begin{equation*}
d/ds(F(s\alpha))_{|s=0}=d/ds(F'(x,s\alpha(x),s\nabla\alpha(x))_{|s=0}=\frac
{\partial F'}{\partial y}(x,0,0)\,\alpha(x)+\frac{\partial
F'}{\partial z}(x,0,0)\,\nabla\alpha(x).
\end{equation*}
On the other hand, $d/ds(F(s\alpha))_{|s=0}=dF[0](\alpha)=d^*\alpha$. 
Combining these equations leads to
\begin{equation}\label{eq:cptQ}
F(\alpha)=d^*\alpha+Q'(x,\alpha(x),\nabla\alpha(x)).
\end{equation}
 \end{remark}

%%%%%%%%%%%%%%%%%%%%%%%%%%%%%%%%%%%%%%%%%%%%
%%%%%%%%%%%%%%%%%%%%%%%%%%%%%%%%%%%%%%%%%%%%

\subsection{Second case: special Lagrangian cones in
$\C^m$}\label{ss:coneslsetup}

Let $\mathcal{C}$ be a SL cone in $\C^m$, endowed with the induced metric
$\tilde{g}$ and orientation. Define $\Phi_{\mathcal{C}}:\mathcal{U}\rightarrow
\C^m$ as in Section \ref{ss:conelagdefs}. Fix any $\mu>2$, $\lambda<2$. Let
$\mathcal{D}_{\mathcal{C}}$ denote the space of closed 1-forms in
$C^\infty_{(\mu-1,\lambda-1)}(\Lambda^1)$ whose graph lies in $\mathcal{U}$.
Given $\alpha\in \mathcal{D}_{\mathcal{C}}$, define $F(\alpha)$ as in Equation
\ref{eq:defF}.

\begin{prop} \label{prop:conenonlinear}
The non-linear map $F$ has the following properties:
\begin{enumerate}
\item The set $F^{-1}(0)$ parametrizes the space of all SL deformations of
$\mathcal{C}$ which are $C^1$-close to $L$ and are asymptotic to $\mathcal{C}$
with rate $(\mu,\lambda)$.
\item $F$ is a well-defined smooth map
\begin{equation*}
F:\mathcal{D}_{\mathcal{C}}\rightarrow
C^\infty_{(\mu-2,\lambda-2)}(\mathcal{C}).
\end{equation*}
In particular, for each $\alpha\in\mathcal{D}_{\mathcal{C}}$, $F(\alpha)\in
C^\infty_{(\mu-2,\lambda-2)}(\mathcal{C})$.
\item The linearization $dF[0]$ of $F$ at $0$ coincides with the operator $d^*$,
\textit{i.e.} 
\begin{equation}
dF[0](\alpha)=d^*\alpha.
\end{equation}
\end{enumerate}
\end{prop}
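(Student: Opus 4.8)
The statements closely parallel the compact case of Proposition~\ref{prop:cptnonlinear}, the one genuinely new ingredient being the weighted analysis. I would prove the claims in the order (1), (3), (2), reserving essentially all of the work for (2). For (1) I would simply combine the correspondence established in Section~\ref{ss:conelagdefs} with Definition~\ref{def:sl}: every $\alpha\in\mathcal{D}_{\mathcal{C}}$ is closed, so $\Phi_{\mathcal{C}}\circ\alpha$ is Lagrangian and asymptotic to $\mathcal{C}$ with rate $(\mu,\lambda)$, and by construction $F(\alpha)=\star((\Phi_{\mathcal{C}}\circ\alpha)^*\Imag\,\Omega)$ vanishes precisely when this submanifold is special Lagrangian; the statement that \emph{every} nearby SL deformation of that rate arises this way is exactly the converse already proved in Section~\ref{ss:conelagdefs}. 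For (3) I would note that the computation of $dF[0]$ in Proposition~\ref{prop:cptnonlinear} is entirely pointwise: Cartan's formula together with the linear-algebraic identity $(i_v\Imag\,\Omega)_{|\mathcal{C}}\equiv-\star\alpha$ gives $dF[0](\alpha)=d^*\alpha$ with no use of compactness. One adds only the observation that $d^*$ maps $C^\infty_{(\mu-1,\lambda-1)}(\Lambda^1)$ into $C^\infty_{(\mu-2,\lambda-2)}$, which is the weighted differentiation rule that a single derivative lowers the weight by one.

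The heart of the matter is (2). Following Remark~\ref{rem:cptQ} I would express $F$ through the underlying smooth fibre function $F'$, via $F(\alpha)(x)=F'(x,\alpha(x),\nabla\alpha(x))$, and use the Taylor expansion \eqref{eq:cptQ} to split $F(\alpha)=d^*\alpha+Q(\alpha)$, where $Q(\alpha)=Q'(x,\alpha(x),\nabla\alpha(x))$ vanishes to second order in $(\alpha,\nabla\alpha)$. The linear term lies in $C^\infty_{(\mu-2,\lambda-2)}$ as just noted, so everything reduces to the nonlinear remainder $Q(\alpha)$. The naive pointwise bound $Q'=O(|\alpha|^2+|\nabla\alpha|^2)$ is \emph{not} enough here: its implied constant depends on $x=(\theta,r)$, and on the non-compact ends one must understand how that constant degenerates as $r\to0$ and $r\to\infty$.

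This is where I would exploit scaling. Since $\Phi_{\mathcal{C}}$ is exactly $\R^+$-equivariant (Equations~\eqref{eq:action}, \eqref{eq:psiequivariant}) and $\Imag\,\Omega$ is homogeneous, $m_t^*\Imag\,\Omega=t^m\Imag\,\Omega$ under the dilation $m_t\colon z\mapsto tz$ of $\C^m$, the map $F$ obeys an exact scaling law. Writing $\hat\alpha_t$ for the $1$-form whose graph is the image of $\Gamma(\alpha)$ under the action \eqref{eq:action}, so that $\Phi_{\mathcal{C}}\circ\hat\alpha_t=m_t\circ(\Phi_{\mathcal{C}}\circ\alpha)\circ D_{1/t}$ with $D_t\colon(\theta,r)\mapsto(\theta,tr)$, the factor $t^m$ from $\Omega$ cancels against $D_{1/t}^*\vol_{\tg}=t^{-m}\vol_{\tg}$ to give $F(\hat\alpha_t)=F(\alpha)\circ D_{1/t}$. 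As the linear part $d^*$ is easily checked to obey the same law, so does the remainder: $Q(\hat\alpha_t)=Q(\alpha)\circ D_{1/t}$. Evaluating at radius $1$ with $t=1/r$ then expresses $Q(\alpha)(\theta,r)$ as the value over the \emph{compact} link $\Sigma$ of $Q'$ applied to the rescaled data $\hat\alpha_{1/r}$, whose pointwise $\tg$-size at $r=1$ is $O(r^{\mu-2})$ as $r\to0$ and $O(r^{\lambda-2})$ as $r\to\infty$ (one power better than $\alpha$ itself, the extra factor $1/r$ coming from the rescaling). Since $Q'$ is quadratic with uniform constant over $\Sigma$, this yields $|Q(\alpha)|=O(r^{2\mu-4})$ on the CS end and $O(r^{2\lambda-4})$ on the AC end; because $\mu>2$ and $\lambda<2$, both are subordinate to the target weights $r^{\mu-2}$, $r^{\lambda-2}$, so $Q(\alpha)$, and hence $F(\alpha)$, lies in $C^\infty_{(\mu-2,\lambda-2)}$. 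Applying the same reduction to $\tnabla^kF(\alpha)$ gives the matching derivative bounds, and smoothness of $F$ between the weighted Fréchet spaces follows from the smoothness of the fixed fibre function $F'$ together with the multiplicative structure of the weighted spaces (Theorem~\ref{thm:embedding}).

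The main obstacle is precisely this uniform weighted estimate on $Q(\alpha)$: the pointwise Taylor remainder carries $x$-dependent, hence $r$-dependent, coefficients, and the assertion that $F$ lands in $C^\infty_{(\mu-2,\lambda-2)}$ rather than in some weaker space rests entirely on the exact homogeneity of $F'$ inherited from the equivariance of $\Phi_{\mathcal{C}}$ and of $\Omega$. Once this scaling reduction to the compact link is in place, everything else is a formal consequence of the compact-case computation.
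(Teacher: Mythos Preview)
Your argument is correct, and parts (1) and (3) coincide with the paper's treatment. For part (2), however, you take a different route than the paper.

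The paper establishes the $C^0$-bound by expanding $F(\alpha)$ directly via the decomposition $\Phi_{\mathcal{C}}=\Psi_{\mathcal{C}}+R$: it writes $F(\alpha)=\Imag\,\tilde\Omega\big((\Phi_{\mathcal{C}}\circ\alpha)_*(e_1),\dots,(\Phi_{\mathcal{C}}\circ\alpha)_*(e_m)\big)$ and controls each term using Equation~\eqref{eq:psi_*} for the $\Psi_{\mathcal{C}}$-part and Equations~\eqref{eq:Requivariance}--\eqref{eq:Rscaling} for the $R$-part. For the derivatives it uses a separate device: it endows $\mathcal{U}$ with the connection pulled back from $\C^m$, so that $\Phi_{\mathcal{C}}^*\Imag\,\tilde\Omega$ becomes parallel, and then transfers the estimate to $\tnabla^k F(\alpha)$ via the difference tensor $A=\nabla-\tnabla$, which satisfies $|A|=O(r^{\lambda-3})$.

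Your approach instead packages the equivariance into a single exact scaling law $F(\hat\alpha_t)=F(\alpha)\circ D_{1/t}$ for the full nonlinear map, and uses it to reduce both the $C^0$ and the derivative estimates to the compact link $\Sigma\times\{1\}$, where the Taylor remainder $Q'$ has a uniform quadratic bound. This is cleaner and more unified: one principle covers all $k$, and you never need to separate $\Psi_{\mathcal{C}}$ from $R$ or invoke the parallel-form trick. The paper's approach, on the other hand, is more hands-on and makes explicit which pieces contribute at which rate (the $\Psi_{\mathcal{C}}$-terms give $O(r^{\lambda-2})$, the $R$-terms $O(r^{2\lambda-4})$), which is useful when one later needs to track the precise orders, as in the gluing applications. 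Two minor points: your reference to Theorem~\ref{thm:embedding} for the multiplicative structure is slightly off, since that theorem concerns Sobolev spaces, but the analogous product rule for the $C^k_{\boldsymbol\beta}$ spaces is elementary; and your derivative argument ``applying the same reduction to $\tnabla^k F(\alpha)$'' does go through, but requires the observation that $|\tnabla^k(h\circ D_{1/r_0})|_{\tg}(\theta,r_0)=r_0^{-k}|\tnabla^k h|_{\tg}(\theta,1)$, which supplies the extra $r^{-k}$ in the weight.
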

\begin{proof}
The first statement follows from the definition of $F$ and the results of
Section \ref{ss:conelagdefs}. Concerning the second statement, we may write
\begin{eqnarray*}
F(\alpha) &=&
\star(\alpha^*(\Phi_{\mathcal{C}}^*\Imag\,\tilde{\Omega}))=\Imag\,\tilde{\Omega}((\Phi_{\mathcal{C}}\circ\alpha)_*(e_1),\dots,(\Phi_{
\mathcal{C}}\circ\alpha)_*(e_m))\\
&=&
\Imag\,\tilde{\Omega}((\Psi_{\mathcal{C}}
\circ\alpha)_*(e_1)+(R\circ\alpha)_*(e_1),\dots,(\Psi_{\mathcal{C}}
\circ\alpha)_*(e_m)+(R\circ\alpha)_*(e_m))\\
&=&
\Imag\,\tilde{\Omega}((\Psi_{\mathcal{C}}\circ\alpha)_*(e_1),\dots,(\Psi_{
\mathcal{C}}\circ\alpha)_*(e_m))+\dots,\\
\end{eqnarray*}
where $e_i$ is a local $\tg$-orthornomal basis of $T\mathcal{C}$.

Consider this last equation as $r\rightarrow\infty$. Equation \ref{eq:psi_*}
shows that its first term is of the form
$\Imag\,\tilde{\Omega}(e_1,\dots,e_m)+O(r^{\lambda-2})$. The first term here
vanishes because $\mathcal{C}$ is SL, leaving the term $O(r^{\lambda-2})$.
Equation \ref{eq:Rscaling} shows that the remaining terms in $F(\alpha)$ are of
the form $O(r^{2\lambda-4})$. Analogous methods apply for $r\rightarrow 0$,
showing that $F(\alpha)\in C^0_{(\mu-2,\lambda-2)}(\mathcal{C})$. 

To study the derivatives of $F(\alpha)$ we endow $\mathcal{U}$ with the metric and Levi-Civita connection $\nabla$ pulled back from $\C^m$ via $\Phi_{\mathcal{C}}$, so that $\nabla(\Phi_{\mathcal{C}}^*\Imag\,\tilde{\Omega})=\Phi_{\mathcal{C}}^*(\tnabla\Imag\,\tilde{\Omega})=0$. Let $g$ denote the induced metric on $\Gamma(\alpha)$. Then $\mathcal{C}$ can be endowed with either the metric $\tg$ and induced connection $\tnabla$ or with the metric $\alpha^*g$ and induced connection $\nabla$. One can check, or cf. \cite{pacini:weighted}, that the fact that $\alpha^*g$ is asymptotic to $\tg$ implies that the difference tensor $A:=\nabla-\tnabla$ satisfies $|A|=O(r^{\lambda-3})$, as $r\rightarrow \infty$. Notice that
\begin{equation*}
 F(\alpha)\,\mbox{vol}_{\tilde{g}}=(\Phi_{\mathcal{C}}\circ\alpha)^*\Imag\,\tilde{\Omega}
\end{equation*}
so, taking derivatives,
\begin{equation*}
 \nabla(F(\alpha)\,\mbox{vol}_{\tilde{g}})=\nabla((\Phi_{\mathcal{C}}\circ\alpha)^*\Imag\,\tilde{\Omega})=(\Phi_{\mathcal{C}}\circ\alpha)^*(\tnabla\Imag\,\tilde{\Omega})=0.
\end{equation*}
This implies
\begin{equation*}
 |(\nabla F(\alpha))\otimes\mbox{vol}_{\tg}|=|F(\alpha)\cdot\nabla(\mbox{vol}_{\tg})|=O(r^{\lambda-2})|\nabla(\mbox{vol}_{\tg})|.
\end{equation*}
Write $\mbox{vol}_{\tg}=e_1^*\otimes\dots\otimes e_m^*$ so that $\nabla(\mbox{vol}_{\tg})=\nabla e_1^*\otimes\dots\otimes e_m^*+\dots+e_1^*\otimes\dots\otimes\nabla e_m^*$. We may assume that $\tnabla e_i^*=0$. Then $\nabla e_i^*=(\nabla-\tnabla) e_i^*=A e_i^*$, leading to $|\nabla(\mbox{vol}_{\tg})|=O(r^{\lambda-3})$.
This shows that $F(\alpha)\in C^1_{(\mu-2,\lambda-2)}(\mathcal{C})$.
Further
calculations of the same type apply to the higher derivatives, showing that
$F(\alpha)\in C^\infty_{(\mu-2,\lambda-2)}(\mathcal{C})$. It is clear that $F$
is smooth.

The third statement can be proved as in Proposition \ref{prop:cptnonlinear}. 
\end{proof}

%%%%%%%%%%%%%%%%%%%%%%%%%%%%%%%%%%%%%%%%%%%%%
%%%%%%%%%%%%%%%%%%%%%%%%%%%%%%%%%%%%%%%%%%%%%

\subsection{Third case: CS/AC special Lagrangians in $\C^m$}\label{ss:accsslsetup}

Let $L$ be a AC, CS or CS/AC SL in $\C^m$ or a CS SL in $M$. The moduli space of
SL deformations of $L$ with fixed singularities coincides locally with the zero
set of a map $F$ defined as in Equation \ref{eq:defF}. The methods and results
of Sections \ref{ss:accslagdefs} and \ref{ss:coneslsetup} then lead to a good
understanding of the properties of $F$, analogous to those described in
Propositions \ref{prop:cptnonlinear} and \ref{prop:conenonlinear}. For example,
assume $L$ is a CS SL in $M$ with rate
$\boldsymbol{\mu}$. Let $\mathcal{D}_{L}$ denote the
space of closed 1-forms in
$C^\infty_{\boldsymbol{\mu}-1}(\Lambda^1)$ whose graph
lies in $\mathcal{U}$. Choose $\alpha\in \mathcal{D}_{L}$. Then one can 
check that $F(\alpha)\in
C^\infty_{\boldsymbol{\mu}-2}(L)$. The calculation is similar to the one already used in the proof of Proposition \ref{prop:conenonlinear}. In particular it uses (i) the fact that the asymptotic cones $\mathcal{C}_i$ are SL, (ii) the fact that the discrepancy between the forms $\Omega$ and $\tilde{\Omega}$ is of the order $O(r)<O(r^{\mu_i-2})$.

We now want to understand how to parametrize the SL deformations of $L$ whose
singularities are allowed to move in the ambient space as in Section
\ref{ss:movingsings}. For example, assume $L$ is a CS SL submanifold in $M$. The
constructions of Section \ref{ss:movingsings} must then be modified as follows.
This time we set
\begin{equation}
\tilde{P}:=\{(p,\upsilon):p\in M,\ \upsilon:\C^m\rightarrow T_pM \mbox{ such
that }\upsilon^*\omega=\tilde{\omega},\ \upsilon^*\Omega=\tilde{\Omega}\},
\end{equation}
so that $\tilde{P}$ is a $\sunitary m$-principal fibre bundle over $M$ of
dimension $m^2+2m-1$. For each end, the cone $\cone_i$ will now have symmetry
group $G_i\subset \sunitary m$. As in Section \ref{ss:movingsings}, let
$\tilde{\E}_i$ denote a smooth submanifold of $\tilde{P}$ transverse to the
orbits of $G_i$. It has dimension $m^2+2m-1-\mbox{dim}(G_i)$. Set
$\tilde{\E}:=\tilde{\E}_1\times\dots\times\tilde{\E}_s$. We then define CS
Lagrangian submanifolds $L_{\tilde{e}}$ and embeddings $\Phi_{L}^{\tilde{e}}$ with
the same properties as before.

Now let $\mathcal{D}_{L}$ denote the space of closed 1-forms in
$C^\infty_{\boldsymbol{\mu}-1}(\Lambda^1)$ whose graph lies in $\mathcal{U}$. We
define a map 
\begin{equation}\label{eq:csdefF}
F:\tilde{\E}\times\mathcal{D}_{L}\rightarrow C^\infty_{\boldsymbol{\mu}-2} (L),\
\ (\tilde{e},\alpha)\mapsto
\star(\alpha^*(\Phi_{L}^{\tilde{e}*}\Imag\,\Omega)).
\end{equation}
\begin{prop} \label{prop:csnonlinear}
Let $L$ be a CS SL in $M$. Then the map $F$ has the following properties:
\begin{enumerate}
\item The set $F^{-1}(0)$ parametrizes the space of all SL deformations of $L$
which are $C^1$-close to $L$ away from the singularities and are asymptotic to
$\mathcal{C}_i$ with rate $\mu_i$ for some choice of
$(\tilde{p}_i,\tilde{\upsilon}_i)$ near $(p_i,\upsilon_i)$.
\item $F$ is a (locally) well-defined smooth map between Fr\'echet spaces. In
particular, for each $\alpha\in\mathcal{D}_{L}$, $F(\alpha)\in
C^\infty_{\boldsymbol{\mu}-2}(L)$. Furthermore, $\int_L F(\alpha)\,vol_g=0$.
\item There exists an injective linear map $\chi:T_e\tilde{\E}\rightarrow
C^\infty_{\boldsymbol{0}}(L)$ such that (i) $\chi(y)\equiv 0$ away from the
singularities and (ii) the linearized map $dF[0]:T_e\tilde{\E}\oplus
C^\infty_{\boldsymbol{\mu}-1}(\Lambda^1)\rightarrow
C^\infty_{\boldsymbol{\mu}-2}(L)$ satisfies
\begin{equation}\label{eq:cslinearization}
dF[0](y,\alpha)=\Delta_g\,\chi(y)+d^*\alpha.
\end{equation}
\end{enumerate}
\end{prop}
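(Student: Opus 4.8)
The plan is to handle the three statements in increasing order of difficulty, reducing the first two to arguments already in place for smooth compact SLs and for SL cones. For statement (1) I would note that, for each fixed $\tilde e$, the submanifolds $\Phi_L^{\tilde e}\circ\alpha$ exhaust (up to reparametrization) the Lagrangian deformations of $L_{\tilde e}$ with rate $\boldsymbol\mu$, exactly as in Section \ref{ss:accslagdefs}; since $F$ is built from $\Imag\,\Omega$, the vanishing of $F(\tilde e,\alpha)$ is equivalent to $\Phi_L^{\tilde e}\circ\alpha$ being SL. Letting $(\tilde e,\alpha)$ range near $(e,0)$ then produces precisely the SL deformations asymptotic to $\cone_i$ with rate $\mu_i$ and singularities near $p_i$.

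For statement (2) the weight claim $F(\alpha)\in C^\infty_{\boldsymbol\mu-2}(L)$ is the local computation sketched after Equation \ref{eq:csdefF}: near each singularity one expands $\Phi_{\cone_i}=\Psi_{\cone_i}+R$, uses that $\cone_i$ is SL (so the leading term cancels) and that $\Omega-\tilde\Omega=O(r)$ with $O(r)<O(r^{\mu_i-2})$, then differentiates with the bound on $\nabla-\tnabla$ exactly as in Proposition \ref{prop:conenonlinear}; smoothness of $F$ follows from the pointwise description via an underlying smooth function as in Remark \ref{rem:cptQ}. For the integral identity I would write $F(\alpha)\,\vol_g=(\Phi_L^{\tilde e}\circ\alpha)^*\Imag\,\Omega$ and integrate over truncations $L_\epsilon:=\{r\geq\epsilon\}$. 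Since $\Imag\,\Omega$ is closed and the deformed submanifold is Hamiltonian-isotopic to the reference SL $L$ through an isotopy supported near the singularities (with the asymptotic cones SL), Stokes' theorem reduces $\int_{L_\epsilon}F(\alpha)\,\vol_g$ to a boundary integral over $\Sigma_i\times\{r=\epsilon\}$; the rate $\mu_i>2$ forces this term to $0$ as $\epsilon\to 0$, while $\int_L\Imag\,\Omega=0$ because $L$ is SL.

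The heart of the proposition is statement (3), which I would compute factor by factor. On $C^\infty_{\boldsymbol\mu-1}(\Lambda^1)$ the calculation is identical to Proposition \ref{prop:cptnonlinear}: fixing $\tilde e=e$, the Lie-derivative computation gives $d^*\alpha$, landing in $C^\infty_{\boldsymbol\mu-2}(L)$ by the weight count of (2). The new ingredient is the $\tilde e$-derivative. Given $y\in T_e\tilde\E$, pick a curve $\tilde e_s$ with $\dot{\tilde e}_0=y$ and let $v:=\frac{d}{ds}\iota_{\tilde e_s}|_{s=0}$ be the associated variation field along $L$. Because the family $\iota_{\tilde e}$ is built from symplectomorphisms concentrated near the singularities (via the charts $\Upsilon_i$), $v$ vanishes away from the singularities and, near each $x_i$, equals the infinitesimal translation-plus-rotation in $\R^{2m}\oplus\mathfrak{su}(m)$ determined by $y$. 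Such an ambient motion is Hamiltonian, $i_v\omega=dH$ for a function $H$ supported near the singularities, so I would set $\chi(y):=H_{|L}$; repeating the Cartan-formula step of Proposition \ref{prop:cptnonlinear} gives $dF[0](y,0)\,\vol_g=(\mathcal L_v\Imag\,\Omega)_{|L}=(d\,i_v\Imag\,\Omega)_{|L}=-d\star d\chi(y)$, whence $dF[0](y,0)=\Delta_g\chi(y)$. Summing the two contributions yields Equation \ref{eq:cslinearization}.

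The main obstacle, and the part needing the most care, is verifying the stated properties of $\chi$. That $\chi(y)\equiv 0$ away from the singularities is immediate from the support of $H$. The membership $\chi(y)\in C^\infty_{\boldsymbol 0}(L)$ follows because near each $x_i$ the Hamiltonian is a translation term (homogeneous of degree $1$, hence $O(r)$) plus a rotation term (homogeneous of degree $2$, hence $O(r^2)$), both lying in $C^\infty_{\boldsymbol 1}\oplus C^\infty_{\boldsymbol 2}\subset C^\infty_{\boldsymbol 0}$; I would also remark that these potentials restrict to \emph{harmonic} functions on the SL cone (the coordinate functions on a minimal link $\Sigma^{m-1}\subset\Sph^{2m-1}$ being $\Delta_\Sigma$-eigenfunctions with eigenvalue $m-1$), which explains why $\Delta_g\chi(y)$ nevertheless decays into the target $C^\infty_{\boldsymbol\mu-2}(L)$. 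Finally, injectivity of $\chi$ is exactly where the transversality built into the slice $\tilde\E$ is used: if $\chi(y)\equiv 0$ then $dH_{|T\cone_i}=0$, so $v$ is tangent to each $\cone_i$, i.e. the infinitesimal motion preserves $\cone_i$ and therefore lies in the Lie algebra of $G_i$; since $\tilde\E_i$ is transverse to the $G_i$-orbits this forces $y=0$. I expect the most delicate bookkeeping to be the identification of $\chi(y)$ with the restricted Hamiltonian, and in particular checking that cutting off $H$ near the singularities does not disturb the global identity $\alpha_y=d\chi(y)$.
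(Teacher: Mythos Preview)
Your proposal is correct and matches the paper's proof closely: the same reductions to Propositions \ref{prop:cptnonlinear} and \ref{prop:conenonlinear} for (1) and (2), and the same Cartan-formula computation and moment-map identification for (3). The only cosmetic difference is that the paper constructs $\chi(y)$ by first restricting $\omega(v,\cdot)$ to a closed $1$-form on $L$ and then invoking the Poincar\'e Lemma on $L$ (rather than producing an ambient Hamiltonian $H$ and restricting), which sidesteps exactly the cutting-off bookkeeping you flag at the end; the injectivity argument in the paper is likewise your tangency argument stated more tersely via the slice property of $\tilde{\E}$.
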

\begin{proof}
The first statement should be interpreted as explained in the proof of
Proposition \ref{prop:cptnonlinear}. The proof follows from the definitions of
$\tilde{\E}$ and $F$ and from the results of Section \ref{ss:movingsings}. The
second statement can be proved as in Propositions \ref{prop:cptnonlinear} and
\ref{prop:conenonlinear}.

Regarding the third statement, the linearization of $F$ with respect to
directions in $C^\infty_{\boldsymbol{\mu}-1}(\Lambda^1)$ can be computed as in
Proposition \ref{prop:cptnonlinear}. Now choose $y\in T_e\tilde{\E}$
corresponding to a curve $\tilde{e}_s\in\tilde{\E}$ such that $\tilde{e}_0=e$. Up to
identifying $\mathcal{U}$ with $M$ via $\Phi_{L}^e$, $\Phi_{L}^{\tilde{e}_s}$
defines a 1-parameter curve of symplectomorphisms $\phi_s$ of $M$ such that
$d/ds(\phi_s)_{|s=0}=v$, for some vector field $v$ on $M$. Thus, as in
Proposition \ref{prop:cptnonlinear},
\begin{eqnarray*}
dF[0](y)\,vol_g &=&
d/ds(F(\tilde{e}_s,0)\,vol_g)_{|s=0}=d/ds((\phi_s)^*\Imag\,\Omega)_{|L;s=0}\\
&=& (\mathcal{L}_v\Imag\,\Omega)_{L}=(di_v\Imag\Omega)_{|L}\\
&=&-d\star\alpha,
\end{eqnarray*}
where $\alpha:=\omega(v,\cdot)_{|L}$ is a closed 1-form on $L$. Notice that, by
definition, $\phi_s\equiv Id$ away from the singularities of $L$, so
$\alpha\equiv 0$ there. Thus, by the Poincar\'e Lemma (cf. \textit{e.g.} Lemma
\ref{lemma:formclosed}), $\alpha$ must be exact on $L$, \textit{i.e.}
$\alpha=d\chi$ for some function $\chi:L\rightarrow \R$. We can define $\chi$
uniquely by imposing that $\chi\equiv 0$ away from the singularities of $L$. The
function $\chi$ depends linearly on $y$, and we can write
$dF[0](y,0)=\Delta_g\,\chi(y)$, as claimed. Furthermore, if $\chi(y)=0$ then
$\alpha=0$ and $v=0$. Since $\tilde{\E}$ is defined so as to parametrize
geometrically distinct immersions, this implies $y=0$.

Roughly speaking, near each singularity and up to the appropriate
identifications, $\tilde{e}_s$ should be thought of as a 1-parameter curve in the
group $\sunitary m\ltimes\C^m$ acting on $\C^m$. This action admits a
\textit{moment map} $\mu:\C^m\rightarrow (Lie(\sunitary m\ltimes\C^m))^*$.
Recall that this means that $\mu$ is equivariant and that, for all $w\in
Lie(\sunitary m\ltimes\C^m)$, the corresponding function
$\mu_w:\C^m\rightarrow\R$ satisfies $d\mu_w=i_w\tilde{\omega}$, \textit{i.e.}
$w$ is a Hamiltonian vector field with Hamiltonian function $\mu_w$. The moment
map can be written explicitly, cf. \textit{e.g.} \cite{haskinspacini} Section
2.6, showing that each $\mu_w$ is at most a quadratic polynomial on $\C^m$.
Notice, for future reference, that for any SL $L\subset\C^m$ the
calculations in the proof of Proposition \ref{prop:cptnonlinear} show that
\begin{equation*}
\Delta_g(\mu_{w|L})=d^*(d\mu_{w|L})=-\star
d\star(i_w\tilde{\omega}_{|L})=\star (d
i_w\Imag\,\tilde{\Omega})_{|L}=\star(\mathcal{L}_w\Imag\,\tilde{\Omega
})_{|L}=0,
\end{equation*}
\textit{i.e.} each $\mu_w$ restricts to a harmonic function on $L$. 

In this set-up our vector field $v$ is (locally) an element of $Lie(\sunitary
m\ltimes\C^m)$ and $\chi(y)=\mu_v$. Thus $\chi(y)$ is bounded as $r\rightarrow
0$. This implies that $\chi(y)\in C^0_{\boldsymbol{0}}(L)$. Further calculations
show that $\chi(y)\in C^\infty_{\boldsymbol{0}}(L)$, as claimed.
\end{proof}

%\begin{remark}\label{rem:chidecay}
%If $M=\C^m$, the fact that $\mu_w$ restricts to a harmonic function on $L$ would mean that $\Delta_g\,\chi(y)$ vanishes in a neighbourhood of each singularity. In general, however, $L$ is SL with respect to the structures on $M$, not the structures on $\C^m$. This discrepancy introduces a term of order $O(r^{\mu-2})$, but still confirms that $\Delta_g\,\chi(y)\in C^\infty_{\boldsymbol{\mu}-2}(L)$, as expected from Equation \ref{eq:csdefF}.
%Recall from Section \ref{s:lagconifolds} that, on each conically singular end, the metric $g_i$ decays to the metric $\tilde{g}_i$ with rate 
%$\mu_i-2$. As shown in \cite{pacini:weighted}, this implies that 
%\begin{equation*}
% |\Delta_{g_i}\,\chi(y)|=|\Delta_{\tg_i}\,\chi(y)|(1+O(r^{\mu_i-2}))+|d\,\chi(y)|_{g_i}O(r^{\mu_i-3}).
%\end{equation*}
%Using the fact that $\chi(y)$ is harmonic with respect to the cone metric, we see that $\Delta_g\,\chi(y)\in C^\infty_{\boldsymbol{\mu}-4}\subset C^\infty_{\boldsymbol{\mu}-2}$, as expected from Equation \ref{eq:csdefF}.
%\end{remark}

Now let $L$ be a CS/AC SL in $\C^m$. Define $\tilde{P}$, $\tilde{\E}$,
\textit{etc.} analogously to the above (cf. Section \ref{ss:movingsings} for the
necessary modifications for the ambient space $\C^m$). Let $\mathcal{D}_{L}$
denote the space of closed 1-forms in
$C^\infty_{(\boldsymbol{\mu}-1,\boldsymbol{\lambda}-1)}(\Lambda^1)$ whose graph
lies in $\mathcal{U}$. Define $F$ as in Equation \ref{eq:csdefF}. 

\begin{prop}\label{prop:accsnonlinear}
Let $L$ be a CS/AC SL in $\C^m$. Then the map $F$ has the following properties:
\begin{enumerate}
\item The set $F^{-1}(0)$ parametrizes the space of all SL deformations of $L$
which are $C^1$-close to $L$ away from the singularities and are asymptotic to
$\mathcal{C}_i$ with rate $(\boldsymbol{\mu},\boldsymbol{\lambda})$ for some
choice of $(\tilde{p}_i,\tilde{\upsilon}_i)$ near $(p_i,\upsilon_i)$.
\item $F$ is a (locally) well-defined smooth map between Fr\'echet spaces. In
particular, for each $\alpha\in\mathcal{D}_{L}$, $F(\alpha)\in
C^\infty_{(\boldsymbol{\mu}-2,\boldsymbol{\lambda}-2)}(L)$.
\item There exists an injective linear map $\chi:T_e\tilde{\E}\rightarrow
C^\infty_{\boldsymbol{0}}(L)$ such that (i) $\chi(y)\equiv 0$ away from the
singularities and (ii) the linearized map $dF[0]:T_e\tilde{\E}\oplus
C^\infty_{(\boldsymbol{\mu}-1,\boldsymbol{\lambda}-1)}(\Lambda^1)\rightarrow
C^\infty_{(\boldsymbol{\mu}-2,\boldsymbol{\lambda}-2)}(L)$ satisfies
\begin{equation}
dF[0](y,\alpha)=\Delta_g\,\chi(y)+d^*\alpha.
\end{equation}
\end{enumerate}
\end{prop}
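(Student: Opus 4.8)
The plan is to read this proposition as the synthesis of the two model computations already carried out: near each conically singular end the analysis of Proposition \ref{prop:csnonlinear} applies, while near each asymptotically conical end the analysis of Proposition \ref{prop:conenonlinear} applies. Since every estimate involved is local to a single end, the two regimes never interact, and it will suffice to patch the local conclusions together over the compact core. Statement (1) then follows at once from the definition of $F$ together with the CS/AC Lagrangian neighbourhood construction of Section \ref{ss:accslagdefs} and the moving-singularity construction of Section \ref{ss:movingsings}: by Definition \ref{def:sl}, the submanifold $\Gamma(\alpha)$ determined by a pair $(\tilde{e},\alpha)$ is SL exactly when the associated $m$-form on $L$ vanishes, i.e. when $F(\tilde{e},\alpha)=0$, and the prescribed rates $(\boldsymbol{\mu},\boldsymbol{\lambda})$ are built into the weighted space $\mathcal{D}_L$.

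For statement (2) I would establish the weighted bound $F(\alpha)\in C^\infty_{(\boldsymbol{\mu}-2,\boldsymbol{\lambda}-2)}(L)$ one end at a time, using that near each end $L$ is modelled on its asymptotic SL cone $\cone_i$ (translated by the center $p_i$, which leaves $\Imag\,\tilde{\Omega}$ invariant). The computation in the proof of Proposition \ref{prop:conenonlinear} already treats both regimes simultaneously: writing $\Phi_{\cone_i}=\Psi_{\cone_i}+R$ and invoking Equations \ref{eq:psi_*} and \ref{eq:Rscaling}, the leading term $\Imag\,\tilde{\Omega}(e_1,\dots,e_m)$ vanishes because $\cone_i$ is SL, so one is left with a term of order $O(r^{\lambda_i-2})$ as $r\to\infty$ on an AC end and of order $O(r^{\mu_i-2})$ as $r\to0$ on a CS end; the difference-tensor estimate $A:=\nabla-\tnabla=O(r^{\lambda_i-3})$ (respectively $O(r^{\mu_i-3})$) then controls all higher derivatives. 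Smoothness of $F$ between the Fr\'echet spaces follows from its pointwise algebraic description, exactly as in Remark \ref{rem:cptQ}. I would also flag the contrast with the compact and CS cases: since $L$ now carries AC ends, $\Gamma(\alpha)$ is noncompact and $\int_L F(\alpha)\,\vol_g$ need not even converge, which is why no analogue of the vanishing-integral identity appears in the statement.

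For statement (3) I would linearize in the two families of directions separately. Differentiating in the directions $\alpha\in C^\infty_{(\boldsymbol{\mu}-1,\boldsymbol{\lambda}-1)}(\Lambda^1)$ reduces to the pointwise identity $(i_v\,\Imag\,\tilde{\Omega})_{|L}\equiv-\star\alpha$ from the proof of Proposition \ref{prop:cptnonlinear}; that computation is purely linear-algebraic at a point and so transfers unchanged, contributing $d^*\alpha$. Differentiating in the directions $y\in T_e\tilde{\E}$ repeats the moving-singularity argument of Proposition \ref{prop:csnonlinear}: a curve $\tilde{e}_s$ through $e$ generates a vector field $v$ supported near the CS singularities, the closed $1$-form $\alpha:=\omega(v,\cdot)_{|L}$ is then compactly supported there, hence exact, $\alpha=d\chi$ with $\chi\equiv0$ away from the singularities; locally $\chi$ coincides with a moment-map component $\mu_v$ for the $\sunitary{m}\ltimes\C^m$ action, which Proposition \ref{prop:csnonlinear} shows is harmonic and bounded. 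This produces the term $\Delta_g\chi(y)$ and the injectivity of $\chi$. Because $\tilde{\E}$ moves only the $s$ conically singular points, $\chi(y)$ vanishes identically on the AC ends.

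The place I would be most careful is the weighted bookkeeping that distinguishes this case from its predecessors: reconciling the two opposite weight conventions, namely decay $\boldsymbol{\lambda}'\geq\boldsymbol{\lambda}$ on AC ends against $\boldsymbol{\mu}'\leq\boldsymbol{\mu}$ on CS ends, cf. Theorem \ref{thm:embedding}, so that the single target $C^\infty_{(\boldsymbol{\mu}-2,\boldsymbol{\lambda}-2)}(L)$ is correctly identified, and confirming that $\chi(y)$, harmonic and bounded near the singularities but identically zero on the AC ends, genuinely lies in $C^\infty_{\boldsymbol{0}}(L)$ for this mixed convention. Everything else is a faithful transcription: statement (1) from Sections \ref{ss:accslagdefs} and \ref{ss:movingsings}, the $\alpha$-linearization and the weighted estimates from Propositions \ref{prop:cptnonlinear} and \ref{prop:conenonlinear}, and the $\tilde{\E}$-linearization from Proposition \ref{prop:csnonlinear}.
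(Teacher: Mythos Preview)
Your proposal is correct and follows the same approach as the paper: the paper's own proof is a single sentence referring back to Proposition \ref{prop:csnonlinear}, plus one additional observation. That observation is worth making explicit in your write-up: because here $L$ is genuinely SL in $\C^m$ (not merely in a general CY $M$), the moment-map identity $\Delta_g(\mu_{v|L})=0$ from the proof of Proposition \ref{prop:csnonlinear} applies to $L$ itself near each singularity, so $\Delta_g\chi(y)$ actually \emph{vanishes} in a neighbourhood of the singularities, not merely lies in some weighted space --- this is stronger than what one gets for CS SLs in a general $M$ and is used later in Theorem \ref{thm:accssl}.
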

\begin{proof}
 The proof is similar to that of Proposition \ref{prop:csnonlinear}, but notice that in this case we obtain stronger control over the properties of the function $\chi(y)=\mu_v$, near the singularities: the fact that $\mu_v$ restricts to a harmonic function on $L$ implies that $\Delta_g\,\chi(y)$ vanishes in a neighbourhood of each singularity. 
\end{proof}

If the spaces $C^\infty(L)$, $C^\infty_{\boldsymbol{\beta}}(L)$ were Banach
spaces and the relevant maps were Fredholm, we could now apply the Implicit
Function Theorem to conclude that the sets $F^{-1}(0)$, and thus
$\mathcal{M}_L$, are smooth. As however they are actually only Fr\'echet spaces,
it is instead necessary to first take the Sobolev space completions of these
spaces, then study the Fredholm properties of the linearized maps. We do this in
Section \ref{s:moduli}. This will require some results concerning the Laplace
operator on conifolds, summarized in Section \ref{s:reviewlaplace}.

%%%%%%%%%%%%%%%%%%%%%%%%%%%%%%%%%%%%%%%%%%%%%%%%%%%%%%%%%%%%%
%%%%%%%%%%%%%%%%%%%%%%%%%%%%%%%%%%%%%%%%%%%%%%%%%%%%%%%%%%%%%

\section{Review of the Laplace operator on conifolds}\label{s:reviewlaplace}

We summarize here some analytic results concerning the Laplace operator on
conifolds, referring to \cite{pacini:weighted} for further details and
references.

\begin{definition}\label{def:exceptionalweights}
Let $(\Sigma,g')$ be a compact Riemannian manifold. Consider the cone
$C:=\Sigma\times (0,\infty)$ endowed with the conical metric
$\tilde{g}:=dr^2+r^2g'$. Let $\Delta_{\tilde{g}}$ denote the corresponding
Laplace operator acting on functions.

For each component $(\Sigma_j,g_j')$ of $(\Sigma,g')$ and each $\gamma\in\R$,
consider the space of homogeneous harmonic functions 
\begin{equation}\label{eq:ac_harmonicter}
V^j_{\gamma}:=\{r^\gamma\sigma(\theta):
\Delta_{\tilde{g}}(r^{\gamma}\sigma)=0\}.
\end{equation}
Set $m^j(\gamma):=\mbox{dim}(V^j_\gamma)$. One can show that $m^j_{\gamma}>0$
iff $\gamma$ satisfies the equation
\begin{equation}\label{eq:exceptionalforlaplacian}
\gamma=\frac{(2-m)\pm\sqrt{(2-m)^2+4e_n^j}}{2},
\end{equation}
for some eigenvalue $e_n^j$ of $\Delta_{g_j'}$ on $\Sigma_j$. Given any weight
$\boldsymbol{\gamma}\in \R^e$, we now set $m(\boldsymbol{\gamma}):=\sum_{j=1}^e
m^j(\gamma_j)$. Let $\mathcal{D}\subseteq\R^e$ denote the set of weights
$\boldsymbol{\gamma}$ for which $m(\boldsymbol{\gamma})>0$. We call these the
\textit{exceptional weights} of $\Delta_{\tg}$.
\end{definition}

Let $(L,g)$ be a conifold. Assume $(L,g)$ is asymptotic to a cone $(C,\tg)$ in
the sense of Definition \ref{def:conifold}. Roughly speaking, the fact that $g$
is asymptotic to $\tilde{g}$ in the sense of Definition \ref{def:metrics_ends}
implies that the Laplace operator $\Delta_g$ is ``asymptotic" to $\Delta_{\tg}$.
Applying Definition \ref{def:exceptionalweights} to $C$ defines weights
$\mathcal{D}\subseteq\R^e$: we call these the \textit{exceptional weights} of
$\Delta_g$. This terminology is due to the following result.

\begin{theorem}\label{thm:laplaceresults}
Let $(L,g)$ be a conifold with $e$ ends. Let $\mathcal{D}$ denote
the exceptional weights of $\Delta_g$. Then $\mathcal{D}$ is a discrete subset of $\R^e$ and the Laplace operator 
\begin{equation*}
\Delta_g:W^p_{k,\boldsymbol{\beta}}(L)\rightarrow
W^p_{k-2,\boldsymbol{\beta}-2}(L)
\end{equation*}
is Fredholm iff $\boldsymbol{\beta}\notin \mathcal{D}$.
\end{theorem}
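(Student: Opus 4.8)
The plan is to follow the classical strategy for elliptic operators on manifolds with conical ends (Lockhart--McOwen), adapted to the conifold setting as in \cite{pacini:weighted}. The statement has two parts: the discreteness of $\mathcal{D}$, which is elementary, and the Fredholm dichotomy, which reduces to a model-operator analysis on each end followed by a parametrix patching argument. First I would dispose of discreteness. By Definition \ref{def:exceptionalweights}, on each link component $(\Sigma_j,g_j')$ the weights $\gamma$ with $V^j_\gamma\neq 0$ are exactly the solutions of Equation \ref{eq:exceptionalforlaplacian}, one pair for each eigenvalue $e_n^j$ of $\Delta_{g_j'}$. Since $\Sigma_j$ is compact, these eigenvalues form a discrete sequence tending to $+\infty$, so the corresponding $\gamma$'s are locally finite in $\R$. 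Hence $\mathcal{D}\subset\R^e$ is a locally finite union of ``walls'' coming from the $e$ ends, in particular discrete, with open dense complement.

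The heart of the matter is the model operator on a single exact cone $(C,\tg)=\Sigma\times(0,\infty)$. Here I would substitute $r=e^t$ to convert $\Delta_{\tg}$, which is homogeneous of degree $-2$, into a conjugate of a translation-invariant operator on the cylinder $\Sigma\times\R$; a short computation using $\rho^{-m}\,\vol_{\tg}=dt\,\vol_{g'}$ from Definition \ref{def:csac_sectionspaces} shows that the weighted space $W^p_{k,\boldsymbol{\beta}}$ becomes an ordinary Sobolev space on the cylinder carrying an exponential weight in $t$. Applying the Mellin transform (Fourier transform in $t$) replaces $\Delta_{\tg}$ by the holomorphic \emph{indicial family} $P(\zeta):=\Delta_{g'}-\zeta(\zeta+m-2)$ of elliptic operators on the compact link $\Sigma$, indexed by a complex spectral parameter $\zeta$. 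For each fixed $\zeta$ this is elliptic on a closed manifold, hence Fredholm of index zero, and it fails to be invertible precisely when $\zeta(\zeta+m-2)$ is an eigenvalue of $\Delta_{g'}$, i.e. exactly at the indicial roots of Equation \ref{eq:exceptionalforlaplacian}. The analytic Fredholm theorem then shows $\zeta\mapsto P(\zeta)^{-1}$ is meromorphic with poles only at these roots. Inverting $\Delta_{\tg}$ on $W^p_{k,\boldsymbol{\beta}}$ amounts to integrating $P(\zeta)^{-1}$ along a vertical contour $\real(\zeta)=c(\boldsymbol{\beta})$ determined by $\boldsymbol{\beta}$ (shifted by the volume weight $\rho^{-m}$), which is possible with uniform bounds exactly when that contour avoids all poles, i.e. when $\boldsymbol{\beta}\notin\mathcal{D}$. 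This makes the model operator an isomorphism on the weighted spaces for non-exceptional weights.

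Finally I would transfer this to $(L,g)$. Since $g$ is asymptotic to $\tg$ in the sense of Definition \ref{def:metrics_ends}, the difference $\Delta_g-\Delta_{\tg}$ has coefficients decaying like $O(r^{\nu_i})$ on each end, so after composing with the model inverse it becomes a compact operator between the relevant weighted spaces. Patching the model inverses on the ends with an interior parametrix (from interior elliptic regularity on the compact core) via a partition of unity produces a two-sided parametrix for $\Delta_g$ modulo compacts, yielding the weighted elliptic estimate $\|u\|_{W^p_{k,\boldsymbol{\beta}}}\le C\big(\|\Delta_g u\|_{W^p_{k-2,\boldsymbol{\beta}-2}}+\|u\|_{L^p(K)}\big)$ on a compact $K$. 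This gives finite-dimensional kernel and closed range for $\boldsymbol{\beta}\notin\mathcal{D}$, and the same argument on the formal adjoint controls the cokernel, establishing the Fredholm property. For the converse I would show that when $\boldsymbol{\beta}\in\mathcal{D}$ the range is not closed: a homogeneous harmonic function $r^\gamma\sigma(\theta)$ at the borderline rate, multiplied by logarithmic cutoffs $\chi_R$, produces a sequence $u_R$ with $\|\Delta_g u_R\|_{W^p_{k-2,\boldsymbol{\beta}-2}}/\|u_R\|_{W^p_{k,\boldsymbol{\beta}}}\to 0$ yet bounded away from the kernel, contradicting the estimate a Fredholm operator would satisfy.

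I expect the main obstacle to be the model-cone step: one must control the meromorphic resolvent family $P(\zeta)^{-1}$ and justify the Mellin inversion with $L^p$-uniform (rather than merely $L^2$/Parseval) bounds along vertical lines, which is where the full weighted harmonic analysis of \cite{pacini:weighted} is genuinely needed. Once the model is settled, the patching and the closed-range failure at exceptional weights are comparatively routine.
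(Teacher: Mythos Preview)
Your approach is correct and is exactly the standard Lockhart--McOwen strategy that underlies the result; note, however, that the paper does not actually prove this theorem in-text---Section~\ref{s:reviewlaplace} is a review section that states the result and defers entirely to \cite{pacini:weighted} for the proof, so there is no ``paper's own proof'' to compare against beyond that citation. Your sketch (discreteness from the spectral theorem on the compact link, Mellin/indicial analysis of the model cone operator, parametrix patching using the decay $|\tnabla^k(\phi_i^*g-\tg_i)|=O(r^{\nu_i-k})$, and a Weyl-sequence argument for the failure of closed range at exceptional weights) is precisely the outline one would expect to find in \cite{pacini:weighted} or in the original Lockhart--McOwen papers.

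One small point worth flagging: you correctly observe that for $e>1$ the set $\mathcal{D}$ is a locally finite union of coordinate hyperplanes (``walls'') in $\R^e$, which is not literally a discrete subset of $\R^e$ in the point-set sense. The paper's use of ``discrete'' is a mild abuse, meaning that the exceptional values in each coordinate separately form a discrete subset of $\R$ (equivalently, the complement of $\mathcal{D}$ is open and dense). Your phrasing ``locally finite union of walls\dots with open dense complement'' is actually more accurate than the theorem statement; just be careful not to write ``in particular discrete'' immediately after describing it as a union of walls, since that is a contradiction when $e\geq 2$.
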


The above theorem, coupled with the ``change of index formula", leads to the
following conclusion, cf. \cite{pacini:weighted}.

\begin{corollary}\label{cor:laplaceresults}
Let $(L,g)$ be a compact Riemannian manifold. Consider the map
$\Delta_g:W^p_k(L)\rightarrow W^p_{k-2}(L)$. Then
\begin{equation*}
\mbox{Im}(\Delta_g)=\{u\in W^p_{k-2}(L):\int_L u\,\mbox{vol}_g=0\}, \ \
\mbox{Ker}(\Delta_g)=\R.
\end{equation*}
Let $(L,g)$ be an AC manifold. Consider the map
$\Delta_g:W^p_{k,\boldsymbol{\lambda}}(L)\rightarrow
W^p_{k-2,\boldsymbol{\lambda}-2}(L)$.
If $\boldsymbol{\lambda}>2-m$ is non-exceptional then this map is surjective. If
$\boldsymbol{\lambda}<0$ then this map is injective, so for
$\boldsymbol{\lambda}\in (2-m,0)$ it is an isomorphism. 

Let $(L,g)$ be a CS manifold with $e$ ends. Consider the map
$\Delta_g:W^p_{k,\boldsymbol{\mu}}(L)\rightarrow
W^p_{k-2,\boldsymbol{\mu}-2}(L)$. If $\boldsymbol{\mu}\in (2-m,0)$ then
\begin{equation*}
\mbox{Im}(\Delta_g)=\{u\in W^p_{k-2,\boldsymbol{\mu}-2}(L): \int_L u\,vol_g=0\},
\ \ Ker(\Delta_g)=\R.
\end{equation*} 
If $\boldsymbol{\mu}>0$ is non-exceptional then this map is injective and 
\begin{equation*}
\mbox{dim(Coker($\Delta_g$))}=e+\sum_{0<\boldsymbol{\gamma}<\boldsymbol{\mu}}
m(\boldsymbol{\gamma}),
\end{equation*}
where $m(\boldsymbol{\gamma})$ is as in Definition \ref{def:exceptionalweights}.

Let $(L,g)$ be a CS/AC manifold with $s$ CS ends and $l$ AC ends. Consider the
map
\begin{equation*}
\Delta_g:W^p_{k,(\boldsymbol{\mu},\boldsymbol{\lambda})}(L)\rightarrow
W^p_{k-2,(\boldsymbol{\mu}-2,\boldsymbol{\lambda}-2)}(L).
\end{equation*}
If $(\boldsymbol{\mu},\boldsymbol{\lambda})\in (2-m,0)$ then this map is an
isomorphism. If $\boldsymbol{\mu}>0$ and $\boldsymbol{\lambda}<0$ are
non-exceptional then this map is injective and 
\begin{equation*}
\mbox{dim(Coker($\Delta_g$))}=s+\sum_{0<\boldsymbol{\gamma}<\boldsymbol{\mu}}
m(\boldsymbol{\gamma}),
\end{equation*}
where $m(\boldsymbol{\gamma})$ is as in Definition \ref{def:exceptionalweights}.
Notice in particular that this dimension depends only on the harmonic functions
on the CS cones.
\end{corollary}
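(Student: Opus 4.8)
The plan is to deduce all four cases from a single package: the Fredholm property of Theorem \ref{thm:laplaceresults}, a maximum principle to pin down kernels, the formal self-adjointness of $\Delta_g$ together with the reflection symmetry $\boldsymbol{\gamma}\mapsto (2-m)-\boldsymbol{\gamma}$ of the indicial roots in equation \ref{eq:exceptionalforlaplacian} to turn cokernels into kernels at a conjugate weight, and the change of index formula of \cite{pacini:weighted} to count cokernels exactly. I would dispatch the compact case first: since $L$ is connected and closed, $\mbox{Ker}(\Delta_g)=\R$, and the Fredholm alternative for the formally self-adjoint operator $\Delta_g$ gives $\mbox{Im}(\Delta_g)=\mbox{Ker}(\Delta_g)^{\perp}=\{u:\int_L u\,\mbox{vol}_g=0\}$.

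For the kernels in the remaining cases the key observation is uniform. On an AC end a function in $W^p_{k,\boldsymbol{\lambda}}$ with $\boldsymbol{\lambda}<0$ decays at infinity, while on a CS end a function in $W^p_{k,\boldsymbol{\mu}}$ with $\boldsymbol{\mu}>0$ tends to $0$ at the singular point; moreover, since $\boldsymbol{\mu}>0>2-m$, such a function grows strictly more slowly than the fundamental solution, so by a removable-singularity estimate it extends continuously to the compactification $\bar L$. A harmonic function that is continuous on the compact space $\bar L$, decays on all AC ends and vanishes at all singular points must vanish, by the maximum principle (its extremes can only be attained at the AC infinities or the singular points, where it is $0$). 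This gives injectivity whenever all AC weights are $<0$ and all CS weights are $>0$, covering the AC case $\boldsymbol{\lambda}<0$, the CS case $\boldsymbol{\mu}>0$, and the CS/AC case. For the CS range $\boldsymbol{\mu}\in(2-m,0)$ the same reasoning shows that any kernel element extends to a bounded harmonic function on $\bar L$, hence is constant, so $\mbox{Ker}(\Delta_g)=\R$.

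Surjectivity I would get by duality. Pairing with $\mbox{vol}_g$ and integrating by parts identifies $\mbox{Coker}(\Delta_g)$ at domain weight $\boldsymbol{\beta}$ with $\mbox{Ker}(\Delta_g)$ at the reflected weight $(2-m)-\boldsymbol{\beta}$, the boundary terms vanishing precisely in the stated ranges. In the AC case $\boldsymbol{\lambda}>2-m$ the reflected weight is $<0$, so the adjoint kernel consists of decaying harmonic functions and is trivial; hence the map is surjective, and combined with injectivity this yields the isomorphism on $(2-m,0)$. The same duality gives the CS/AC isomorphism for $(\boldsymbol{\mu},\boldsymbol{\lambda})\in(2-m,0)$ (here the constant is excluded from the kernel by the AC decay, so $\mbox{Ker}(\Delta_g)=0$). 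In the CS case on $(2-m,0)$ the functional $u\mapsto\int_L u\,\mbox{vol}_g$ is bounded on the target space exactly because $\boldsymbol{\mu}>2-m$ and, by the boundary-term computation, cuts out $\mbox{Im}(\Delta_g)$; thus $\mbox{dim}(\mbox{Coker}(\Delta_g))=1$ and the index is $0$ there.

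For the cokernel dimensions when $\boldsymbol{\mu}>0$ I would run the change of index formula from the index-zero baseline just established, increasing the CS weights from some $\boldsymbol{\mu}_1\in(2-m,0)$ up to $\boldsymbol{\mu}$ while holding the AC weights fixed in the range $(2-m,0)$; since no AC exceptional weight is then crossed, the answer involves only the CS cones. Each time a CS weight crosses an exceptional value $\boldsymbol{\gamma}$ the index drops by $m(\boldsymbol{\gamma})$. The weights crossed are $\boldsymbol{\gamma}=\boldsymbol{0}$, contributing $m(\boldsymbol{0})=e$ (respectively $s$), because the degree-zero homogeneous harmonic functions on each connected link are the constants, together with the exceptional weights in $(\boldsymbol{0},\boldsymbol{\mu})$; note that equation \ref{eq:exceptionalforlaplacian} forces every root to satisfy $\gamma\le 2-m$ or $\gamma\ge 0$, so none lie in $(2-m,0)$ and the baseline interval is genuinely exceptional-free. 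As the map is injective, $\mbox{dim}(\mbox{Ker}(\Delta_g))=0$ and the index equals $-\mbox{dim}(\mbox{Coker}(\Delta_g))$, yielding $\mbox{dim}(\mbox{Coker}(\Delta_g))=e+\sum_{0<\boldsymbol{\gamma}<\boldsymbol{\mu}}m(\boldsymbol{\gamma})$ for CS and the analogous formula with $s$ for CS/AC. The main obstacle is precisely this last bookkeeping: one must fix the sign and normalization of the change of index formula, confirm that $(2-m,0)$ is exceptional-free, and correctly account for the crossing at $\boldsymbol{\gamma}=\boldsymbol{0}$ that produces the $+e$ (respectively $+s$) term; the analytic crux underneath is the removable-singularity estimate on the CS ends.
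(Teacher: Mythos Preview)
Your proposal is correct and follows precisely the route the paper indicates: Theorem~\ref{thm:laplaceresults} together with the change of index formula, with full details deferred to \cite{pacini:weighted}. The paper gives no proof of its own here, so your sketch---kernels via the maximum principle and a removable-singularity estimate on CS ends, cokernels via the duality $\boldsymbol{\beta}\mapsto (2-m)-\boldsymbol{\beta}$, and exact cokernel dimensions via the change of index from the index-zero baseline on $(2-m,0)$---is exactly the intended argument.
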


%%%%%%%%%%%%%%%%%%%%%%%%%%%%%%%%%
%%%%%%%%%%%%%%%%%%%%%%%%%%%%%%%%%

\section{Moduli spaces of special Lagrangian conifolds}\label{s:moduli}

Recall the statement of the Implicit Function Theorem.
\begin{theorem} \label{thm:IFT}
Let $F:E_1\rightarrow E_2$ be a smooth map between Banach spaces such that
$F(0)=0$. Assume $P:=dF[0]$ is surjective and  $\mbox{Ker}(P)$ admits a closed
complement $Z$, \textit{i.e.} $E_1=\mbox{Ker}(P)\oplus Z$. Then there exists a
smooth map $\Phi:\mbox{Ker}(P)\rightarrow Z$ such that $F^{-1}(0)$ coincides
locally with the graph $\Gamma(\Phi)$ of $\Phi$. In particular, $F^{-1}(0)$ is
(locally) a smooth Banach submanifold of $E_1$.
\end{theorem}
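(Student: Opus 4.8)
The plan is to reduce the statement to the Inverse Function Theorem in Banach spaces, whose only genuine analytic input is the Contraction Mapping Principle.

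First I would use the algebraic hypotheses to make the linearization invertible in the $Z$-direction. Write points of $E_1$ as pairs $(x,z)$ according to the splitting $E_1=\mbox{Ker}(P)\oplus Z$. The restriction $P|_Z:Z\rightarrow E_2$ is injective, since its kernel is $\mbox{Ker}(P)\cap Z=\{0\}$, and surjective, since $P$ is surjective and vanishes on $\mbox{Ker}(P)$. As $Z$ is a closed subspace of the Banach space $E_1$ it is itself Banach, so the Bounded Inverse Theorem shows that $P|_Z$ is a topological isomorphism onto $E_2$.

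Next I would introduce the auxiliary map
$$G:E_1\rightarrow \mbox{Ker}(P)\oplus E_2,\quad G(x,z):=(x,F(x,z)).$$
Since $F$ is smooth and the first component is linear, $G$ is smooth, and its differential at the origin is $(\xi,\zeta)\mapsto(\xi,P|_Z\,\zeta)$, where I have used $P\xi=0$ for $\xi\in\mbox{Ker}(P)$. By the previous step this differential is a Banach-space isomorphism, so the Inverse Function Theorem furnishes a smooth local inverse $G^{-1}$ of $G$ near $0$. Because $G$ fixes the first coordinate, its inverse has the form $G^{-1}(x,w)=(x,H(x,w))$ for some smooth $Z$-valued map $H$. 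Setting $\Phi(x):=H(x,0)$ then yields a smooth map $\mbox{Ker}(P)\rightarrow Z$, and the equivalences $F(x,z)=0 \Leftrightarrow G(x,z)=(x,0) \Leftrightarrow (x,z)=G^{-1}(x,0) \Leftrightarrow z=\Phi(x)$ identify $F^{-1}(0)$ locally with the graph $\Gamma(\Phi)$; a graph of a smooth map is a smooth submanifold, which gives the final claim.

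The one step carrying real content is the Inverse Function Theorem itself, so if a self-contained argument were wanted I would instead solve $F(x,z)=0$ directly: for each fixed small $x$, rewrite the equation as the fixed-point problem $z=T_x(z):=z-(P|_Z)^{-1}F(x,z)$ and show that smoothness of $F$ makes the remainder $F(x,z)-P|_Z\,z$ have arbitrarily small Lipschitz constant on a small ball, so that $T_x$ is a contraction with a unique fixed point $\Phi(x)$ depending smoothly on $x$. The delicate point here, as always for this theorem, is obtaining these contraction estimates \emph{uniformly} in $x$ near $0$; this is exactly where completeness of the Banach spaces and continuity of $(P|_Z)^{-1}$ enter.
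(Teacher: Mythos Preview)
Your argument is correct and is exactly the standard reduction of the Implicit Function Theorem to the Inverse Function Theorem in Banach spaces. The paper itself provides no proof of this statement: it is introduced with ``Recall the statement of the Implicit Function Theorem'' and used as a black box, so there is nothing to compare against. Your proposal would serve perfectly well if the paper had wanted to include a proof.
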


The following result is straight-forward. 

\begin{prop}\label{prop:fredholmreducestofinitedim}
Let $F:E_1\rightarrow E_2$ be a smooth map between Banach spaces such that
$F(0)=0$. Assume $P:=dF[0]$ is Fredholm. Set $\mathcal{I}:=\mbox{Ker}(P)$ and
choose $Z$ such that $E_1=\mathcal{I}\oplus Z$. Let $\mathcal{O}$ denote a
finite-dimensional subspace of $E_2$ such that $E_2=\mathcal{O}\oplus
\mbox{Im}(P)$. Define
\begin{equation*}
G:\mathcal{O}\oplus E_1\rightarrow E_2,\ \ (\gamma,e)\mapsto \gamma+F(e).
\end{equation*}
Identify $E_1$ with $(0,E_1)\subset \mathcal{O}\oplus E_1$. Then:
\begin{enumerate}
\item The map $dG[0]=Id\oplus P$ is surjective and
$\mbox{Ker}(dG[0])=\mbox{Ker}(P)$. Thus, by the Implicit Function Theorem, there
exist $\Phi:\mathcal{I}\rightarrow\mathcal{O}\oplus Z$ such that
$G^{-1}(0)=\Gamma(\Phi)$.
\item $F^{-1}(0)=\{(i,\Phi(i)):\Phi(i)\in
Z\}=\{(i,\Phi(i)):\pi_{\mathcal{O}}\circ\Phi(i)=0\}$,
where $\pi_{\mathcal{O}}:\mathcal{O}\oplus Z\rightarrow\mathcal{O}$ is the
standard projection.
\item Let $\pi_{\mathcal{I}}:\mathcal{I}\oplus Z\rightarrow \mathcal{I}$ denote
the standard projection. Then $\pi_{\mathcal{I}}$ is a continuous open map so it
restricts to a homeomorphism
\begin{equation*}
\pi_{\mathcal{I}}:F^{-1}(0)\rightarrow (\pi_{\mathcal{O}}\circ\Phi)^{-1}(0)
\end{equation*}
between $F^{-1}(0)$ and the zero set of the smooth map
$\pi_{\mathcal{O}}\circ\Phi:\mathcal{I}\rightarrow\mathcal{O}$, which is defined
between finite-dimensional spaces.
\end{enumerate}
\end{prop}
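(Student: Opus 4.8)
The plan is to treat the three items in order, letting the Implicit Function Theorem (Theorem \ref{thm:IFT}) do all the substantive work and reserving the rest for the linear algebra of the two direct-sum decompositions. First I would compute $dG[0]$. Since $G(\gamma,e)=\gamma+F(e)$ is affine in $\gamma$ and $F(0)=0$, differentiation at the origin gives $dG[0](\gamma,e)=\gamma+Pe$. Surjectivity is immediate from the splitting $E_2=\mathcal{O}\oplus\mbox{Im}(P)$: any $y\in E_2$ writes as $y=\gamma_0+Pe_0$, so $dG[0](\gamma_0,e_0)=y$. For the kernel, if $\gamma+Pe=0$ then, since $\mathcal{O}\cap\mbox{Im}(P)=\{0\}$, both summands vanish, forcing $\gamma=0$ and $e\in\mbox{Ker}(P)=\mathcal{I}$; the converse inclusion is clear. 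Hence $\mbox{Ker}(dG[0])=\{0\}\oplus\mathcal{I}$, which I identify with $\mathcal{I}$.

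To invoke Theorem \ref{thm:IFT} I must exhibit a closed complement of this kernel in $\mathcal{O}\oplus E_1$. Writing $E_1=\mathcal{I}\oplus Z$ yields $\mathcal{O}\oplus E_1=\mathcal{I}\oplus(\mathcal{O}\oplus Z)$, and $\mathcal{O}\oplus Z$ is closed, being the sum of the closed subspace $Z$ and the finite-dimensional subspace $\mathcal{O}$. The Implicit Function Theorem then produces a smooth map $\Phi:\mathcal{I}\rightarrow\mathcal{O}\oplus Z$ with $G^{-1}(0)=\Gamma(\Phi)$ locally, establishing item (1). For item (2) the key observation is that $G(0,e)=F(e)$, so $F(e)=0$ if and only if $(0,e)\in G^{-1}(0)=\Gamma(\Phi)$. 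Decomposing $e=i+z$ with $i\in\mathcal{I}$, $z\in Z$, the point $(0,e)$ has $\mathcal{I}$-component $i$ and $(\mathcal{O}\oplus Z)$-component $(0,z)$; membership in $\Gamma(\Phi)$ forces $\Phi(i)=(0,z)$, that is $\pi_{\mathcal{O}}\circ\Phi(i)=0$ and $z$ equals the $Z$-component of $\Phi(i)$. This is exactly the asserted description $F^{-1}(0)=\{(i,\Phi(i)):\Phi(i)\in Z\}=\{(i,\Phi(i)):\pi_{\mathcal{O}}\circ\Phi(i)=0\}$.

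For item (3) I would use the parametrization just obtained: along $F^{-1}(0)$ one has $e=i+\Phi(i)$ with $\Phi(i)\in Z$, whence $\pi_{\mathcal{I}}(e)=i$. Thus $\pi_{\mathcal{I}}$ sends $F^{-1}(0)$ bijectively onto $\{i\in\mathcal{I}:\pi_{\mathcal{O}}\circ\Phi(i)=0\}=(\pi_{\mathcal{O}}\circ\Phi)^{-1}(0)$, with continuous inverse $i\mapsto i+\Phi(i)$ (continuity of $\Phi$) and continuous forward map (a coordinate projection), giving the claimed homeomorphism. Finally $\pi_{\mathcal{O}}\circ\Phi:\mathcal{I}\rightarrow\mathcal{O}$ is smooth and maps between the finite-dimensional spaces $\mathcal{I}=\mbox{Ker}(P)$ and $\mathcal{O}$, so its zero set is the promised finite-dimensional model for $F^{-1}(0)$.

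The individual steps are elementary, and I do not expect any genuine analytic difficulty; the one place a careless argument could slip is the consistent bookkeeping of the nested splittings $E_1=\mathcal{I}\oplus Z$ and $\mathcal{O}\oplus E_1=\mathcal{I}\oplus(\mathcal{O}\oplus Z)$, together with the identification $\mbox{Ker}(dG[0])\cong\mathcal{I}$. Keeping straight which factor each of $\pi_{\mathcal{I}}$ and $\pi_{\mathcal{O}}$ projects onto, and that the target of $\Phi$ is $\mathcal{O}\oplus Z$ rather than $E_1$, is the only subtlety, and it is what makes items (2) and (3) fall out cleanly once item (1) is in place.
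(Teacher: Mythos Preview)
Your proposal is correct and complete. The paper itself gives no proof of this proposition, stating only that ``the following result is straight-forward''; your argument fills in exactly the expected details via the Implicit Function Theorem and the direct-sum bookkeeping, which is the only natural route here.
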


We now have all the ingredients necessary to prove various smoothness results
for SL moduli spaces. In all cases we follow the same steps. Section
\ref{s:setup} described each moduli space as the zero set of a map $F$. The
first step is to use regularity to show that one can equivalently study the zero
set of a map $\tilde{F}$. The domain of $\tilde{F}$ is of the form $K\times
W^p_{k,(\boldsymbol{\mu},\boldsymbol{\lambda})}(L)$ where $K$ is a
finite-dimensional vector space defined in terms of spaces introduced in
Sections \ref{ss:closedforms} and \ref{s:setup}. Roughly speaking, this
corresponds to separating the obvious Hamiltonian deformations of $L$ from a
finite-dimensional space of other Lagrangian deformations. The geometric
description of the latter depends on the case in question. The differential
$d\tilde{F}[0]$ is then a finite-dimensional perturbation of the Laplace
operator $\Delta_g$ acting on functions. The second step is to analyze this
linearized    operator, showing that under appropriate conditions it is
surjective. The third step is to identify the kernel of $d\tilde{F}[0]$, at
least up to projections. One can then apply the Implicit Function Theorem and
conclude.

\subsubsection*{Smooth compact special Lagrangians} The following result was first proved by McLean \cite{mclean}.

\begin{theorem} \label{thm:mclean}
Let $L$ be a smooth compact SL submanifold of a CY manifold $M$. Let
$\mathcal{M}_L$ denote the moduli space of SL deformations of $L$. Then
$\mathcal{M}_L$ is a smooth manifold of dimension $b^1(L)$. 
\end{theorem}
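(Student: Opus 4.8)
The plan is to realize $\mathcal{M}_L$ locally as $F^{-1}(0)$, pass to Sobolev completions so that the Implicit Function Theorem (Theorem \ref{thm:IFT}) applies, and then recover smoothness by elliptic regularity. The decisive simplification comes from choosing the space $H$ in Decomposition \ref{decomp:closedforms} to be the space $\mathcal{H}^1$ of $g$-harmonic $1$-forms on the compact manifold $L$: by Hodge theory $\mathcal{H}^1\cong H^1(L;\R)$ has dimension $b^1(L)$, it consists of smooth forms, and each of its elements is both closed \emph{and} coclosed. With this choice every closed $1$-form is uniquely $\alpha=h+df$ with $h\in\mathcal{H}^1$ and $f\in C^\infty(L)$ (the latter modulo constants), and by Proposition \ref{prop:cptnonlinear} the linearization satisfies $dF[0](h+df)=d^*(h+df)=\Delta_g f$, because $d^*h=0$.

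First I would fix $p>1$ and $k$ large enough that $W^p_k(L)\hookrightarrow C^1(L)$ and the relevant Sobolev spaces form an algebra (classical Sobolev embedding and multiplication theorems, cf. Theorem \ref{thm:embedding}); this guarantees that the nonlinear map $F$ of Equation \ref{eq:defF}, whose structure is made explicit in Remark \ref{rem:cptQ} as $F(\alpha)=d^*\alpha+Q'(x,\alpha,\nabla\alpha)$, is a well-defined smooth map on Sobolev completions. Parametrizing closed $W^p_k$ $1$-forms by $(h,f)\in\mathcal{H}^1\oplus(W^p_{k+1}(L)/\R)$, I regard $F$ as a smooth map into $W^p_{k-1}(L)$ with $F(0)=0$ and linearization $dF[0](h,f)=\Delta_g f$.

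The crux is surjectivity of $dF[0]$. By the compact case of Corollary \ref{cor:laplaceresults}, $\Delta_g:W^p_{k+1}(L)/\R\to W^p_{k-1}(L)$ is injective with image exactly the mean-zero functions $\{u:\int_L u\,\mathrm{vol}_g=0\}$, a closed subspace of codimension one. On the other hand Proposition \ref{prop:cptnonlinear}(2) asserts $\int_L F(\alpha)\,\mathrm{vol}_g=0$ for every $\alpha$, so $F$ in fact takes values in this same codimension-one subspace. Restricting the codomain accordingly makes $dF[0]$ surjective, with kernel $\mathcal{H}^1\oplus 0\cong H^1(L;\R)$; being finite-dimensional, this kernel admits a closed complement. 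The Implicit Function Theorem then exhibits $F^{-1}(0)$ locally as a smooth manifold of dimension $\dim\ker dF[0]=b^1(L)$.

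Finally I would invoke elliptic regularity: a solution $\alpha\in W^p_k$ of $F(\alpha)=0$ defines a special Lagrangian submanifold, and solutions of the special Lagrangian equation are automatically smooth, so the Sobolev zero set coincides with the genuine set of smooth SL deformations and is in particular independent of the auxiliary parameters $p,k$. This identifies the smooth Banach manifold produced above with $\mathcal{M}_L$ and completes the proof. The step demanding the most care is the analytic passage from the Fr\'echet formulation to the Banach one --- verifying that $F$ is a smooth map between the chosen Sobolev spaces and that its zero set consists of smooth forms --- since the geometric heart of the argument, the interplay between the harmonic representatives, the codomain constraint $\int_L F=0$, and the mapping properties of $\Delta_g$, is comparatively clean.
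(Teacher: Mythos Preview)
Your proof is correct and follows the same overall architecture as the paper: pass to Sobolev completions, use Decomposition \ref{decomp:closedforms} to write closed forms as $\beta+df$, restrict the codomain to mean-zero functions via Proposition \ref{prop:cptnonlinear}(2), apply Corollary \ref{cor:laplaceresults} and the Implicit Function Theorem, and finish with regularity.

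The one genuine difference is your choice $H=\mathcal{H}^1$, the harmonic $1$-forms. This is a real simplification: because $d^*h=0$ for $h\in\mathcal{H}^1$, the linearization collapses to $dF[0](h,f)=\Delta_g f$ and the kernel is visibly $\mathcal{H}^1\oplus 0$, of dimension $b^1(L)$, with no further work. The paper instead takes $H$ to be an arbitrary space of closed representatives (as in Equation \ref{eq:naturalH}), so $d\tilde{F}[0](\beta,f)=d^*\beta+\Delta_g f$; it must then argue that for each basis element $\beta_i$ the equation $\Delta_g f=-d^*\beta_i$ is solvable, assemble a basis $(\beta_i,f_i)$ together with the constants, obtain a kernel of dimension $b^1(L)+1$, and quotient by $\R$ at the end. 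Your shortcut buys a cleaner kernel computation in the compact case; the paper's longer route is chosen deliberately so that the proof runs in parallel with the AC, CS and CS/AC cases treated later, where no Hodge decomposition is available and one is forced to work with the generic spaces $H$, $\widetilde{H}$, $E_0$, etc.\ of Section \ref{ss:closedforms}.
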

\begin{proof}
Choose $k\geq 3$ and $p>m$. Consider the space $\mbox{Ker}(d)$ of closed 1-forms
in $W^p_{k-1}(\Lambda^1)$. Let $\mathcal{D}_L$ denote the forms
$\alpha\in\mbox{Ker}(d)$ whose graph $\Gamma(\alpha)$ lies in $\mathcal{U}$.
Notice that $\Gamma(\alpha)$ is a well-defined $C^1$ Lagrangian submanifold in
$\mathcal{U}$ by the standard Sobolev embedding
$W^p_{k-1}(\Lambda^1)\hookrightarrow C^1(\Lambda^1)$. For the same reason,
$\mathcal{D}_L$ is an open neighbourhood of the origin in $\mbox{Ker}(d)$.
Consider the map 
\begin{equation}\label{eq:cptFsobolev}
F:\mathcal{D}_L\rightarrow \{u\in W^p_{k-2}(L):\int_L u\,vol_g=0\}, \ \
\alpha\mapsto \star(\pi_*((\Phi_L^*\Imag\,\Omega)_{|\Gamma(\alpha)})).
\end{equation}
Recall that $W^p_{k-2}(L)$ is closed under multiplication.
Together with the ideas of Proposition \ref{prop:cptnonlinear}, this shows that
$F$ is a (locally well-defined) smooth map between Banach spaces with
differential $dF[0](\alpha)=d^*\alpha$. Assume $\alpha\in F^{-1}(0)$. Then, by
composition with $\Phi_L$, $\alpha$ defines a $C^1$ SL submanifold in $M$.
Standard regularity results for minimal submanifolds then show that $\alpha$ is
smooth. Thus $\mathcal{M}_L$ is locally homeomorphic, via $\Phi_L$, to
$F^{-1}(0)$. 

Decomposition \ref{decomp:closedforms} shows that any $\alpha\in F^{-1}(0)$ is
of the form $\alpha=\beta+df$ for some unique $\beta\in H$ and some $f\in
C^\infty(L)$, defined up to a constant. We can thus re-phrase the SL deformation
problem as follows. Define $\mathcal{\tD}_L$ as the space of pairs $(\beta,f)$
in $H\times W^p_k(L)$ such that $\alpha:=\beta+df\in \mathcal{D}_L$. Clearly
$\mathcal{\tD}_L$ is an open neighbourhood of the origin. Then $\mathcal{\tD}_L$
is the domain of the (locally defined) map between Banach spaces
\begin{equation}\label{eq:cptFsobolevbis}
\tilde{F}:H\times W^p_k(L)\rightarrow \{u\in W^p_{k-2}(L):\int_L u\,vol_g=0\},\
\ \tilde{F}(\beta,f):=F(\beta+df).
\end{equation} 
Clearly, $d\tilde{F}[0](\beta,f)=d^*\beta+\Delta_g f$. Let $\R$ denote the space
of constant functions in $W^p_k(L)$. Notice that both $\mathcal{\tD}_L$ and
$\tilde{F}$ are invariant under translations in $\R$. Assume
$\tilde{F}(\beta,f)=0$. With respect to $f$ this is a second-order elliptic
equation. Standard regularity results show that $f$ is smooth. This proves that
$\mathcal{M}_L$ is locally homeomorphic to the quotient space
$\tilde{F}^{-1}(0)/\R$. To conclude, it is sufficient to prove that
$\tilde{F}^{-1}(0)$ is smooth. According to Corollary \ref{cor:laplaceresults},
the map
\begin{equation}
\Delta_g:W^p_k(L)\rightarrow \{u\in W^p_{k-2}(L):\int_Lu\,vol_g=0\}
\end{equation}
is surjective. This implies that $d\tilde{F}[0]$ is surjective. Let $\beta_i$ be
a basis for $H$. For each $\beta_i$ the equation $d\tilde{F}[0](\beta_i,f)=0$
admits a solution $f_i$. Another solution is given by the pair $\beta=0$, $f=1$.
It is simple to check that these give a basis for the kernel of $d\tilde{F}[0]$.
Applying the Implicit Function Theorem we conclude that $\tilde{F}^{-1}(0)$ is
smooth of dimension $b^1(L)+1$, thus $\mathcal{M}_L$ is smooth of dimension
$b^1(L)$.
\end{proof}

\subsubsection*{AC special Lagrangians} The analogous result for AC SLs was originally proved independently by the
author \cite{pacini:defs} and by Marshall \cite{marshall}. We present here a
simplified proof, starting with the following weighted regularity result due to
Joyce, cf. \cite{joyce:I} Theorems 5.1 and 7.7.
\begin{lemma}\label{l:SLweightedregularity}
Let $\mathcal{C}$ be a SL cone in $\C^m$, endowed with the induced metric
$\tilde{g}$ and orientation. Define $\Phi_{\mathcal{C}}:\mathcal{U}\rightarrow
\C^m$ and the map $F$ as in Section \ref{ss:coneslsetup}. Fix any $\mu>2$ and
$\lambda<2$ with $\lambda\neq 0$. Assume given a closed 1-form $\alpha\in
C^1_{(\mu-1,\lambda-1)}(\mathcal{U})$ satisfying $F(\alpha)=0$. Analogously to
Decomposition \ref{decomp:closedforms_accs}, we can write $\alpha=\alpha'+dA$
where (i) $\alpha'$ is compactly-supported on the small end and
translation-invariant on the large end, and (ii) $A\in C^1_{(\mu,\lambda)}(L)$.
Then $\alpha'$ is smooth and $A\in C^\infty_{(\mu,\lambda)}(L)$, so $\alpha\in
C^\infty_{(\mu-1,\lambda-1)}(\mathcal{U})$. 
\end{lemma}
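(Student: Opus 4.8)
The plan is to turn the nonlinear equation $F(\alpha)=0$ into a scalar second-order equation for the potential $A$ and then bootstrap, via weighted Schauder estimates, from the given $C^1$-control to full $C^\infty$-control at the same weights. The representative $\alpha'$ is smooth for trivial reasons: on the small end it is a compactly-supported smooth representative of a cohomology class, and on the large end it is the pullback of a (smooth, indeed harmonic) $1$-form on the link $\Sigma$. Hence the whole content of the lemma is the assertion $A\in C^\infty_{(\mu,\lambda)}(L)$, after which $\alpha=\alpha'+dA\in C^\infty_{(\mu-1,\lambda-1)}(\mathcal{U})$ is immediate.

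First I would write down the equation for $A$. As in Remark \ref{rem:cptQ} and the proof of Proposition \ref{prop:conenonlinear}, $F(\alpha)=d^*\alpha+Q(x,\alpha,\nabla\alpha)$ where $Q$ is smooth and vanishes to second order. Substituting $\alpha=\alpha'+dA$ and using $d^*dA=\Delta_g A$ converts $F(\alpha)=0$ into the quasilinear equation
\begin{equation*}
\Delta_g A=-\,d^*\alpha'-Q(x,\alpha,\nabla\alpha),\qquad \alpha=\alpha'+dA,
\end{equation*}
whose principal part at our small solution is simply $\Delta_g$.

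Second, I would check that the inhomogeneity lies in the target space $C^\infty_{(\mu-2,\lambda-2)}$. The term $d^*\alpha'$ vanishes near the singularity; on the large end a short computation with the conical Christoffel symbols (as in the proof of Lemma \ref{l:translationinvariant}) gives $d^*_{\tg}\alpha'=0$ when $\alpha'$ is the pullback of a harmonic form, and the asymptotically conical discrepancy $d^*_g-d^*_{\tg}$ only improves the decay, so $d^*\alpha'$ has the required weight. For the quadratic term the hypotheses $\mu>2$ and $\lambda<2$ are exactly what is needed: a $1$-form in $C^\infty_{(\mu-1,\lambda-1)}$ feeds into $Q$ to produce something in $C^\infty_{(2\mu-4,2\lambda-4)}$, and $2\mu-4\ge\mu-2$ on the small end while $2\lambda-4\le\lambda-2$ on the large end, so $Q\in C^\infty_{(\mu-2,\lambda-2)}$.

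Third comes the step I expect to be the crux: promoting $A\in C^1_{(\mu,\lambda)}$ to $A\in C^\infty_{(\mu,\lambda)}$. Interior smoothness of $A$ is routine, since $\Phi_{\mathcal{C}}\circ\alpha$ is a minimal submanifold and elliptic regularity for the minimal-surface system makes $A$ smooth on $L$; the difficulty is controlling the decay of the higher derivatives. For this I would invoke the weighted Schauder estimates of \cite{pacini:weighted}, obtained by rescaling the equation on the conical annuli $\{r\sim 2^k\}$ to a fixed annulus, where $g$ is uniformly close to the dilation-homogeneous model $\tg$. The decisive point is that under this rescaling the quadratic term $Q$ acquires a coefficient that decays like $2^{k(\lambda-2)}$ on the large end (and like $2^{k(\mu-2)}$, $k\to-\infty$, on the small end), which tends to zero precisely because $\lambda<2$ and $\mu>2$; the rescaled equations are therefore uniformly-controlled perturbations of $\Delta_{\tg}$ with uniform interior estimates. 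Scaling back and combining with the pinning bound $A\in C^0_{(\mu,\lambda)}$ yields $|\tnabla^jA|=O(r^{\mu-j})$ as $r\to0$ and $O(r^{\lambda-j})$ as $r\to\infty$ for all $j$, which is the claim. The hypothesis $\lambda\neq0$ is not used here; it enters only upstream, in the logarithm-free integration of Lemma \ref{l:formexact} that makes the decomposition $\alpha=\alpha'+dA$ with $A$ in the stated weighted space available at all.
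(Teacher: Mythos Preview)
Your proposal is correct and follows essentially the same strategy as the paper: first invoke standard minimal-surface regularity for interior smoothness, then obtain the weighted higher-order bounds by rescaling on conical annuli so that the equation becomes a uniformly controlled perturbation of the model Laplacian with uniform interior Schauder estimates---the paper carries this out via the equivalent logarithmic substitution $r=e^z$ to the cylinder $(\Sigma\times\R,\tilde h)$ rather than dyadic rescaling, and it absorbs the $\alpha'$-terms into the quasilinear operator $\tilde F(A)$ instead of splitting them off as inhomogeneity. Your splitting requires $d^*_{\tg}\alpha'$ to have the right decay, which you secure by taking $\alpha'$ to be the pullback of a harmonic form on $\Sigma$; this is a legitimate choice of representative (Hodge theory on the compact link) but is not forced by the lemma's hypotheses, so it is worth saying explicitly---the paper's absorption of $\alpha'$ into the operator sidesteps this point entirely.
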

\begin{proof} Standard regularity results for minimal submanifolds show that
$\alpha\in C^1_{(\mu-1,\lambda-1)}(\mathcal{U})\cap C^\infty(\mathcal{U})$.
Using the same ideas as in the proof of Decomposition
\ref{decomp:closedforms_accs}, this suffices to prove that $\alpha'$ and $A$ are
smooth. It is thus enough to show that the higher derivatives of $A$ converge at
the correct rate as $r\rightarrow\infty$ and $r\rightarrow 0$. We sketch here a
proof for $r\rightarrow\infty$, referring to \cite{joyce:I} for details; the
other case is analogous.

In terms of $A$, \textit{i.e.} absorbing the $\alpha'$-terms into the operator,
the equation $F(\alpha)=0$ corresponds to an equation $\tilde{F}(A)=0$. Given
$r_0>0$ and $\epsilon<<1$, consider the equivalent equation 
\begin{equation}\label{eq:conformalSLeq}
r^2\tilde{F}(A)=0\ \ \mbox{restricted to }\Sigma\times(r_0-\epsilon
r_0,r_0+\epsilon r_0).
\end{equation}
As in Theorem \ref{thm:mclean} the linearization of $\tilde{F}$ is
$\Delta_{\tilde{g}}$. One can check that the change of coordinates $r=e^z$
transforms Equation \ref{eq:conformalSLeq} into an equation of the form 
\begin{equation}\label{eq:conformalSLeqbis}
\Delta_{\tilde{h}}(A)+\dots=0\ \ \mbox{restricted to
}\Sigma\times(r_0'-\epsilon', r_0'+\epsilon'), 
\end{equation}
where $\tilde{h}$ is the ``cylindrical metric"
$\tilde{h}:=r^{-2}\tilde{g}=dz^2+g'$. Up to a translation we can identify
$\Sigma\times(r_0'-\epsilon', r_0'+\epsilon')$ with the fixed, \textit{i.e.}
$r_0$-independent, domain $\Sigma\times(-\epsilon', \epsilon')$. One can show
that Equation \ref{eq:conformalSLeqbis} converges to the equation
$\Delta_{\tilde{h}}(A)=0$ on this domain in such a way that interior estimates
for the solutions are uniform as $r_0\rightarrow\infty$. In particular, in terms
of H\"{o}lder norms, there exists a constant $C=C(k,\beta)$ independent of $r_0$
such that
\begin{equation}\label{eq:SLregestimate}
\|A\|_{C^{k,\beta}}\leq C\cdot\|A\|_{C^1}
\end{equation}
on the domain $\Sigma\times(-\epsilon', \epsilon')$ and with respect to the
metric $\tilde{h}$. To be precise, as this is an ``interior'' estimate, the
domain on the left hand side is slightly smaller than the domain on the right
hand side.

Let us now write this estimate in terms of the coordinate $r$ and multiply both
sides by $r^{-\lambda}$. We can then check that
\begin{equation}\label{eq:SLregestimatebis}
\|A\|_{C^k_\lambda}\leq C\cdot\|A\|_{C^1_\lambda}
\end{equation}
on the domain $\Sigma\times(r_0-\epsilon r_0, r_0+\epsilon r_0)$ and with
respect to the metric $\tilde{g}$. As $r_0$ is arbitrary and
$\|A\|_{C^1_\lambda}$ is bounded on the large end, this shows that
$\|A\|_{C^k_\lambda}$ is bounded for all $k$ so $A\in C^\infty_\lambda$.
\end{proof}

\begin{theorem} \label{thm:pacini}
Let $L$ be an AC SL submanifold of $\C^m$ with rate
$\boldsymbol{\lambda}$. Let $\mathcal{M}_L$ denote the moduli space of SL
deformations of $L$ with rate $\boldsymbol{\lambda}$. Consider the operator
\begin{equation}\label{eq:aclaplacian}
\Delta_g:W^p_{k,\boldsymbol{\lambda}}(L)\rightarrow
W^p_{k-2,\boldsymbol{\lambda}-2}(L).
\end{equation}
\begin{enumerate}
\item If $\boldsymbol{\lambda}\in (0,2)$ is a non-exceptional weight for
$\Delta_g$ then $\mathcal{M}_L$ is a smooth manifold of dimension
$b^1(L)+\mbox{dim(Ker$(\Delta_g)$)}-1$. 
\item If $\boldsymbol{\lambda}\in (2-m,0)$ then $\mathcal{M}_L$ is a smooth
manifold of dimension $b^1_c(L)$.
\end{enumerate}
\end{theorem}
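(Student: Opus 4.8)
The plan is to follow the three-step strategy of Theorem \ref{thm:mclean}, now in the weighted category dictated by the rate $\boldsymbol{\lambda}$, using Corollary \ref{cor:laplaceresults} in place of the compact Laplacian result. Fix $k\geq 3$, $p>m$, so that by the weighted Sobolev embedding (Theorem \ref{thm:embedding}) every closed $\alpha\in W^p_{k-1,\boldsymbol{\lambda}-1}(\Lambda^1)$ has a well-defined $C^1$ graph; let $\mathcal{D}_L$ be the open set of those whose graph lies in $\mathcal{U}$. I would define
\begin{equation*}
F:\mathcal{D}_L\rightarrow W^p_{k-2,\boldsymbol{\lambda}-2}(L),\ \ \alpha\mapsto \star(\pi_*((\Phi_L^*\Imag\,\Omega)_{|\Gamma(\alpha)})).
\end{equation*}
Exactly as in Proposition \ref{prop:conenonlinear}, together with the multiplicative property of weighted Sobolev spaces from Theorem \ref{thm:embedding}, $F$ is a smooth map between Banach spaces with $dF[0](\alpha)=d^*\alpha$. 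If $F(\alpha)=0$ then $\Phi_L\circ\alpha$ is a $C^1$ minimal submanifold, hence smooth by elliptic regularity; the weighted regularity Lemma \ref{l:SLweightedregularity} then gives $\alpha\in C^\infty_{\boldsymbol{\lambda}-1}(\mathcal{U})$. Thus $\mathcal{M}_L$ is locally homeomorphic, via $\Phi_L$, to $F^{-1}(0)$.

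The second step removes the gauge freedom by writing $\alpha=\beta+df$. I would establish the weighted Sobolev analogue of Decomposition \ref{decomp:closedforms_growth} (when $\boldsymbol{\lambda}>0$) or Decomposition \ref{decomp:closedforms_decay} (when $\boldsymbol{\lambda}<0$): choosing a finite-dimensional space $K$ of smooth closed $1$-forms representing the appropriate cohomology, every closed $\alpha\in W^p_{k-1,\boldsymbol{\lambda}-1}(\Lambda^1)$ decomposes uniquely as $\alpha=\beta+df$ with $\beta\in K$ and $f\in W^p_{k,\boldsymbol{\lambda}}(L)$. This reduces $F^{-1}(0)$ to the zero set of
\begin{equation*}
\tilde{F}:K\times W^p_{k,\boldsymbol{\lambda}}(L)\rightarrow W^p_{k-2,\boldsymbol{\lambda}-2}(L),\ \ \tilde{F}(\beta,f):=F(\beta+df),
\end{equation*}
whose linearization is $d\tilde{F}[0](\beta,f)=d^*\beta+\Delta_g f$.

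For the third step I treat the two cases separately. In case (1), with $\boldsymbol{\lambda}\in(0,2)$ non-exceptional, Corollary \ref{cor:topsummary} lets me take $K\cong H^1(L;\R)$, so $\mbox{dim}(K)=b^1(L)$, while Corollary \ref{cor:laplaceresults} gives that $\Delta_g:W^p_{k,\boldsymbol{\lambda}}\rightarrow W^p_{k-2,\boldsymbol{\lambda}-2}$ is surjective (as $\boldsymbol{\lambda}>2-m$ automatically for $m\geq 3$). Hence $d\tilde{F}[0]$ is surjective with finite-dimensional kernel of dimension $b^1(L)+\mbox{dim}(\mbox{Ker}(\Delta_g))$; since $\boldsymbol{\lambda}>0$ the constants lie in $W^p_{k,\boldsymbol{\lambda}}(L)$, so $(\beta,f)\mapsto\beta+df$ has a one-dimensional kernel $\R$ of constant functions. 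Applying the Implicit Function Theorem (Theorem \ref{thm:IFT}) to $\tilde{F}$ and then quotienting by this $\R$ yields that $\mathcal{M}_L$ is smooth of dimension $b^1(L)+\mbox{dim}(\mbox{Ker}(\Delta_g))-1$. In case (2), with $\boldsymbol{\lambda}\in(2-m,0)$, I take $K\cong H^1_c(L;\R)$ (compactly-supported representatives), so $\mbox{dim}(K)=b^1_c(L)$; now Corollary \ref{cor:laplaceresults} gives that $\Delta_g$ is an isomorphism, so $d\tilde{F}[0]$ is surjective with kernel isomorphic to $K$, and since $\boldsymbol{\lambda}<0$ no non-zero constant lies in $W^p_{k,\boldsymbol{\lambda}}(L)$, so $(\beta,f)\mapsto\beta+df$ is injective and no quotient is required. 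The Implicit Function Theorem then gives $\mathcal{M}_L$ smooth of dimension $b^1_c(L)$.

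The main obstacle I anticipate is the second step: the cohomological decompositions of Section \ref{ss:closedforms_bis} are proved in the Fr\'echet category $C^\infty_{\boldsymbol{\beta}}$, and I must transfer them to the Banach spaces $W^p_{k,\boldsymbol{\lambda}}(L)$ and check that the finite-dimensional space $K$ of smooth representatives is genuinely complementary to the weighted-exact forms $d(W^p_{k,\boldsymbol{\lambda}}(L))$. Intertwined with this is the delicate bookkeeping of the constant functions, which lie in the Laplacian kernel precisely when $\boldsymbol{\lambda}>0$: it is exactly their presence that produces the extra $-1$ in case (1) and their absence that removes it in case (2).
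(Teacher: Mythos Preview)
Your proposal is correct and follows essentially the same route as the paper: set up the weighted Sobolev map $F$, invoke Lemma \ref{l:SLweightedregularity} to identify $F^{-1}(0)$ with $\mathcal{M}_L$, decompose closed forms as $\beta+df$ to obtain $\tilde F$, then apply Corollary \ref{cor:laplaceresults} and the Implicit Function Theorem. The only organisational difference is in case (2): the paper keeps the finite-dimensional piece as $\widetilde H_\infty\times E_\infty$ (with the constants $\R\subset E_\infty$, quotienting at the end), whereas you fold these together into a single $K\cong H^1_c(L;\R)$ of compactly-supported representatives and thereby avoid any quotient; both packagings give the same count via Remark \ref{rem:compactsupport}. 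Your closing paragraph also correctly isolates the one genuine technical point, namely that Decompositions \ref{decomp:closedforms_growth} and \ref{decomp:closedforms_decay} are stated in $C^\infty_{\boldsymbol{\beta}}$ and must be matched with the Sobolev domain of $\tilde F$; the paper handles this by applying the smooth decomposition only to elements of $F^{-1}(0)$ after regularity, rather than proving a full Sobolev version.
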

\begin{proof}
As in Theorem \ref{thm:mclean}, choose $k\geq 3$ and $p>m$ so that
$W^p_{k-1,\boldsymbol{\lambda}-1}(\Lambda^1)\subset
C^1_{\boldsymbol{\lambda}-1}(\Lambda^1)$. Let $\mathcal{D}_L$ denote the space
of closed 1-forms in $W^p_{k-1,{\boldsymbol{\lambda}-1}}(\Lambda^1)$ whose graph
$\Gamma(\alpha)$ lies in $\mathcal{U}$. Consider the map 
\begin{equation}\label{eq:acFsobolev}
F:\mathcal{D}_L\rightarrow W^p_{k-2,\boldsymbol{\lambda}-2}(L), \ \
\alpha\mapsto \star(\pi_*((\Phi_L^*\Imag\,\tilde{\Omega})_{|\Gamma(\alpha)})).
\end{equation}
Assume $\boldsymbol{\lambda}<2$. In this case Theorem
\ref{thm:embedding} shows that
$W^p_{k-2,\boldsymbol{\lambda}-2}(L)$ is closed under
multiplication. Together with the ideas of Proposition
\ref{prop:cptnonlinear}, this shows that $F$ is a (locally well-defined) smooth
map between Banach spaces with differential $dF[0](\alpha)=d^*\alpha$. Assume
$F(\alpha)=0$. Theorem \ref{thm:embedding} and Lemma
\ref{l:SLweightedregularity} then show that $\alpha\in
C^\infty_{\boldsymbol{\lambda}-1}(\Lambda^1)$ so $F^{-1}(0)$ is locally
homeomorphic, via $\Phi_L$, to $\mathcal{M}_L$.

Now assume $\boldsymbol{\lambda}\in (0,2)$. Decomposition
\ref{decomp:closedforms_growth} shows that any $\alpha\in F^{-1}(0)$ is of the
form $\alpha=\beta+df$, for some $\beta\in H$ and some $f\in
C^\infty_{\boldsymbol{\lambda}}(L)$. Define $\mathcal{\tD}_L$ as the space of
pairs $(\beta,f)$ in $H\times W^p_{k,\boldsymbol{\lambda}}(L)$ such that
$\alpha:=\beta+df\in \mathcal{D}_L$. Clearly $\mathcal{\tD}_L$ is an open
neighbourhood of the origin. Then $\mathcal{\tD}_L$ is the domain of the
(locally defined) smooth map between Banach spaces
\begin{equation}\label{eq:acFsobolevbis}
\tilde{F}:H\times W^p_{k,\boldsymbol{\lambda}}(L)\rightarrow
W^p_{k-2,\boldsymbol{\lambda}-2}(L),\ \ \tilde{F}(\beta,f):=F(\beta+df)
\end{equation} 
with $d\tilde{F}[0](\beta,f)=d^*\beta+\Delta_g f$ and invariant under
translations in $\R$. Assume $\tilde{F}(\beta,f)=0$. Theorem \ref{thm:embedding}
and Lemma \ref{l:SLweightedregularity} then show that $f\in
C^\infty_{\boldsymbol{\lambda}}(L)$. This proves that $\mathcal{M}_L$ is locally
homeomorphic, via $\Phi_L$, to the quotient space $\tilde{F}^{-1}(0)/\R$. To
conclude, it is thus sufficient to prove that $\tilde{F}^{-1}(0)$ is smooth. For
this we need to further assume that $\boldsymbol{\lambda}$ is non-exceptional.
Then Corollary \ref{cor:laplaceresults} shows that the map of Equation
\ref{eq:aclaplacian} is surjective, so $d\tilde{F}[0]$ is surjective. Let
$\beta_i$ be a basis for $H$. For each $\beta_i$ the equation
$d\tilde{F}[0](\beta_i,f)=0$ admits a solution $f_i$. More solutions are given
by the pairs $\beta=0$, $f\in Ker(\Delta_g)$. It is simple to check that these
give a basis for the kernel of $d\tilde{F}[0]$. Applying the Implicit Function
Theorem we conclude that $\tilde{F}^{-1}(0)$ is smooth of dimension
$\mbox{dim}(H\oplus\mbox{Ker}(\Delta_g))$. Thus $\mathcal{M}_L$ is smooth and
has the claimed dimension.

Now assume $\boldsymbol{\lambda}\in (2-m,0)$. In this case Decomposition
\ref{decomp:closedforms_decay} shows that any $\alpha\in F^{-1}(0)$ is of the
form $\alpha=\beta+dv+df$, for some $\beta\in \widetilde{H}_\infty$, $dv\in
d(E_\infty)$ and $df\in d(C^\infty_{\boldsymbol{\lambda}}(L))$. We can use
regularity as before to prove that $\mathcal{M}_L$ is locally homeomorphic to
the quotient space $\tilde{F}^{-1}(0)/\R$, for the (locally defined) map 
\begin{equation}\label{eq:acFsobolevter}
\tilde{F}:\widetilde{H}_\infty\times E_\infty\times
W^p_{k,\boldsymbol{\lambda}}(L)\rightarrow W^p_{k-2,\boldsymbol{\lambda}-2}(L),\
\ \tilde{F}(\beta,v,f)=F(\beta+dv+df).
\end{equation}
Notice that this time the constant functions $\R$ are contained in $E_\infty$.
We conclude as before that $\tilde{F}^{-1}(0)$ is smooth, this time of dimension
$\mbox{dim}(\widetilde{H}_\infty\oplus E_\infty)$. Remark
\ref{rem:compactsupport} then shows that $\mathcal{M}_L$ is smooth of dimension
$b^1_c(L)$.
\end{proof}

\subsubsection*{CS special Lagrangians} Now assume that $L$ is CS SL with singularities modelled on cones
$\mathcal{C}_i$. It turns out that smoothness of $\mathcal{M}_L$ then requires
an additional ``stability" assumption on $\mathcal{C}_i$. Roughly speaking, it
is required that the cones $\mathcal{C}_i$ admit no additional harmonic
functions with prescribed growth, beyond those which necessarily exist for
geometric reasons.

\begin{definition}\label{def:stability}
Let $\mathcal{C}$ be a SL cone in $\C^m$. Let $(\Sigma,g')$ denote the link of
$\mathcal{C}$ with the induced metric. Assume $\mathcal{C}$ has a unique
singularity at the origin; equivalently, assume that $\Sigma$ is smooth and that
it is not a sphere $\Sph^{m-1}\subset \Sph^{2m-1}$. Recall from the proof of
Proposition \ref{prop:csnonlinear} that the standard action of $\sunitary
m\ltimes\C^m$ on $\C^m$ admits a moment map $\mu$ and that the components of
$\mu$ restrict to harmonic functions on $\mathcal{C}$. Let $G$ denote the
subgroup of $\sunitary m$ which preserves $\mathcal{C}$. Then $\mu$ defines on
$\mathcal{C}$ $2m$ linearly independent harmonic functions of linear growth; in
the notation of Definition \ref{def:exceptionalweights} these functions are
contained in the space $V_\gamma$ with $\gamma=1$. The moment map also defines
on $\mathcal{C}$ $m^2-1-\mbox{dim}(G)$ linearly independent harmonic functions
of quadratic growth: these belong to the space $V_\gamma$ with $\gamma=2$.
Constant functions define a third space of homogeneous harmonic functions on
$\mathcal{C}$, \textit{i.e.} elements in $V_\gamma$ with $\gamma=0$. In
particular, these three values of $\gamma$ are always exceptional values for the
operator $\Delta_{\tilde{g}}$ on any SL cone, in the sense of Definition
\ref{def:exceptionalweights}.

We say that $\mathcal{C}$ is \textit{stable} if these are the only functions in
$V_\gamma$ for $\gamma=0,1,2$ and if there are no other exceptional values
$\gamma$ in the interval $[0,2]$. More generally, let $L$ be a CS or CS/AC SL
submanifold. We say that a singularity $x_i$ of $L$ is \textit{stable} if the
corresponding cone $\mathcal{C}_i$ is stable.
\end{definition}

The following result is due to Joyce \cite{joyce:II}.
\begin{theorem} \label{thm:joyce}
Let $L$ be a CS SL submanifold of $M$ with $s$ singularities and rate
$\boldsymbol{\mu}$. Let $\mathcal{M}_L$ denote the moduli space of SL
deformations of $L$ with moving singularities and rate $\boldsymbol{\mu}$.
Assume $\boldsymbol{\mu}$ is non-exceptional for the map
\begin{equation}\label{eq:cslaplacian}
\Delta_g:W^p_{k,\boldsymbol{\mu}}(L)\rightarrow \{u\in
W^p_{k-2,\boldsymbol{\mu}-2}(L): \int_L u\,vol_g=0\}.
\end{equation}
Then $\mathcal{M}_L$ is locally homeomorphic to the zero set of a smooth map
$\Phi: \mathcal{I}\rightarrow\mathcal{O}$
defined (locally) between finite-dimensional vector spaces. If
$\boldsymbol{\mu}=2+\epsilon$ and all singularities are stable then
$\mathcal{O}=\{0\}$ and $\mathcal{M}_L$ is smooth of dimension
$\mbox{dim}(\mathcal{I})=b^1_c(L)-s+1$.
\end{theorem}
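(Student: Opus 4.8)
The plan is to run, almost verbatim, the three-step template of Theorems \ref{thm:mclean} and \ref{thm:pacini}, now incorporating the moving-singularity parameter $\tilde{\E}$ from Proposition \ref{prop:csnonlinear} and the decomposition appropriate to the CS range $\boldsymbol{\mu}>0$. First I would fix $k\geq 3$ and $p>m$, so that $W^p_{k-1,\boldsymbol{\mu}-1}(\Lambda^1)\hookrightarrow C^1_{\boldsymbol{\mu}-1}(\Lambda^1)$ by Theorem \ref{thm:embedding}, let $\mathcal{D}_L$ be the closed forms in $W^p_{k-1,\boldsymbol{\mu}-1}(\Lambda^1)$ whose graph lies in $\mathcal{U}$, and consider the Sobolev version of the map $F$ of Proposition \ref{prop:csnonlinear},
\[
F:\tilde{\E}\times\mathcal{D}_L\to\Big\{u\in W^p_{k-2,\boldsymbol{\mu}-2}(L):\int_L u\,\vol_g=0\Big\}.
\]
Since $\boldsymbol{\mu}>2$, Theorem \ref{thm:embedding} shows the target is closed under multiplication, so, exactly as in Proposition \ref{prop:csnonlinear}, $F$ is a smooth map of Banach spaces with linearization $dF[0](y,\alpha)=\Delta_g\,\chi(y)+d^*\alpha$. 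Standard elliptic regularity for minimal submanifolds together with the weighted regularity of Lemma \ref{l:SLweightedregularity} then shows that every $\alpha\in F^{-1}(0)$ is smooth and lies in $C^\infty_{\boldsymbol{\mu}-1}(\Lambda^1)$, so that $F^{-1}(0)$ is locally homeomorphic, via $\Phi_L^{\tilde e}$, to $\mathcal{M}_L$.

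Next I would re-coordinatize using Decomposition \ref{decomp:closedforms_decay} (valid since $\boldsymbol{\mu}>0$): every closed $\alpha\in C^\infty_{\boldsymbol{\mu}-1}(\Lambda^1)$ is $\alpha=\beta+dv+df$ with $\beta\in\widetilde{H}_0$, $v\in E_0$ and $f\in C^\infty_{\boldsymbol{\mu}}(L)$. Passing to Sobolev completions yields a locally defined smooth map
\[
\tilde{F}:T_e\tilde{\E}\times\widetilde{H}_0\times E_0\times W^p_{k,\boldsymbol{\mu}}(L)\to\Big\{u:\int_L u\,\vol_g=0\Big\},\qquad \tilde{F}(y,\beta,v,f):=F(y,\beta+dv+df),
\]
with differential $P:=d\tilde{F}[0](y,\beta,v,f)=\Delta_g\,\chi(y)+d^*\beta+\Delta_g v+\Delta_g f$. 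Since $v$ and $f$ are defined only up to constants while the geometry depends only on $\alpha$, the constant direction (living inside $E_0$, as constants are excluded from $W^p_{k,\boldsymbol{\mu}}$ for $\boldsymbol{\mu}>0$) records the reparametrization freedom $\R$, and $\mathcal{M}_L$ is locally homeomorphic to $\tilde{F}^{-1}(0)/\R$. Now $P$ differs from $\Delta_g:W^p_{k,\boldsymbol{\mu}}(L)\to W^p_{k-2,\boldsymbol{\mu}-2}(L)$ by the finite-rank operator coming from the $(y,\beta,v)$ directions; as $\boldsymbol{\mu}$ is non-exceptional, Theorem \ref{thm:laplaceresults} makes $\Delta_g$ Fredholm, hence $P$ is Fredholm. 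Applying Proposition \ref{prop:fredholmreducestofinitedim} with $\mathcal{I}:=\mbox{Ker}(P)/\R$ and $\mathcal{O}:=\mbox{Coker}(P)$ then gives the first assertion at once: $\mathcal{M}_L$ is locally the zero set of a smooth map $\Phi:\mathcal{I}\to\mathcal{O}$ between finite-dimensional spaces.

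For the stable case $\boldsymbol{\mu}=2+\epsilon$ the remaining task is to prove $\mathcal{O}=\mbox{Coker}(P)=\{0\}$ and to compute $\dim\mathcal{I}$. By Corollary \ref{cor:laplaceresults} the cokernel of $\Delta_g$ has dimension $s+\sum_{0<\gamma<\boldsymbol{\mu}}m(\gamma)$, and stability (Definition \ref{def:stability}) forces the only exceptional weights of each cone $\mathcal{C}_i$ in $(0,\boldsymbol{\mu})$ to be $\gamma=1$ and $\gamma=2$, with $m(1)=2m$ and $m(2)=m^2-1-\dim G_i$; thus $\sum_{0<\gamma<\boldsymbol{\mu}}m(\gamma)=\sum_i(m^2+2m-1-\dim G_i)=\dim\tilde{\E}$. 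The crux is to read these cokernel directions off geometrically: near each singularity the functions $\chi(y)=\mu_v$ appearing in $dF[0]$ restrict to the moment-map components, which are precisely the homogeneous harmonic functions of degrees $1$ and $2$, so $y\mapsto\Delta_g\chi(y)$ surjects onto the $\dim\tilde{\E}$ cokernel classes at $\gamma=1,2$, while $d(E_0)$ (of dimension $s-1$) supplies the remaining $\gamma=0$ (constant) directions. \textbf{The main obstacle} is exactly this surjectivity claim, i.e.\ verifying that moving the singularities and adjusting the end-constants really hit a complement of $\mbox{Im}(\Delta_g)$ independently, which is where stability is essential. Granting it, $P$ is surjective, $\mathcal{O}=\{0\}$, and $\tilde{F}^{-1}(0)$ is smooth by the Implicit Function Theorem \ref{thm:IFT}. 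Finally, a Fredholm index count closes the argument: with $\dim\widetilde{H}_0=\dim\widetilde{H}^1_c=b^1_c(L)-s+1$, perturbing by finite rank preserves the index of the block operator $(y,\beta,v,f)\mapsto\Delta_g f$, giving $\mbox{ind}(P)=\big(\dim\tilde{\E}+b^1_c(L)+1\big)-\big(s+\dim\tilde{\E}-1\big)=b^1_c(L)-s+2$; hence $\dim\mbox{Ker}(P)=b^1_c(L)-s+2$, and after quotienting by $\R$ we obtain $\dim\mathcal{M}_L=\dim\mathcal{I}=b^1_c(L)-s+1$.
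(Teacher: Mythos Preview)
Your overall strategy matches the paper's: pass to the Sobolev map $F$, reparametrize via Decomposition~\ref{decomp:closedforms_decay}, note that the linearized operator is a finite-rank perturbation of $\Delta_g$, and invoke Proposition~\ref{prop:fredholmreducestofinitedim}. The index count at the end is also correct. There is a minor bookkeeping difference---the paper sets $\mathcal{I}:=\widetilde{H}_0$ and takes $\mathcal{O}$ to be the cokernel of the \emph{restricted} linearization on $T_e\tilde{\E}\oplus E_0\oplus W^p_{k,\boldsymbol{\mu}}$ (dropping $\widetilde{H}_0$), whereas you take $\mathcal{I}=\mbox{Ker}(P)/\R$ and $\mathcal{O}=\mbox{Coker}(P)$ for the full $P$---but either choice is legitimate.

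The genuine gap is exactly the step you flag as ``the main obstacle'': you assert that $y\mapsto\Delta_g\chi(y)$ and the $E_0$-directions surject onto the cokernel classes, but you do not prove it, and the heuristic you sketch (matching $\chi(y)$ near the singularities with the homogeneous harmonic functions of degree $1,2$ on the cones) is not a surjectivity argument---those homogeneous functions live on the cones and describe how the index \emph{changes} across exceptional weights, not elements of $\mbox{Coker}(\Delta_g)$ on $L$ that you can hit directly. The paper avoids this difficulty entirely by computing the \emph{kernel} of the restricted linearization $d\tilde{F}[0]:T_e\tilde{\E}\oplus E_0\oplus W^p_{k,\boldsymbol{\mu}}(L)\to\{u:\int_L u=0\}$ and showing it is exactly $\R$. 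The argument is short: if $\Delta_g(\chi(y)+v+f)=0$ then, since $\chi(y)\in C^\infty_{\boldsymbol{0}}$, $v$ is constant on each end, and $f\in W^p_{k,\boldsymbol{\mu}}$, the sum lies in $W^p_{k,-\boldsymbol{\epsilon}}(L)$; Corollary~\ref{cor:laplaceresults} then forces it to be constant. This means the corresponding infinitesimal Lagrangian deformation is trivial, so $y=0$ (by construction of $\tilde{\E}$), hence $\chi(y)=0$, $f=0$, and $v\in\R$. In the stable case one then has $\dim\mbox{Coker}(\Delta_g|_{\text{mean-zero}})=d-1$ while $\dim(T_e\tilde{\E}\oplus E_0)=d$; since enlarging the domain by $d$ dimensions raised the kernel by only $1$, the Fredholm index forces the cokernel of the restricted map to vanish, giving $\mathcal{O}=\{0\}$. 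This kernel computation is the missing ingredient in your argument, and it is what makes the surjectivity claim rigorous without any direct identification of cokernel representatives.
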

\begin{proof}
Start with a map $F$ defined as in Section \ref{ss:accsslsetup} on
$\tilde{\E}\times W^p_{k-1,\boldsymbol{\mu}-1}(\Lambda^1)$. As in Theorem
\ref{thm:pacini}, regularity and Decomposition \ref{decomp:closedforms_decay}
show that $\mathcal{M}_L$ is locally homeomorphic to $\tilde{F}^{-1}(0)/\R$,
where $\tilde{F}$ is the (locally-defined) map
\begin{eqnarray*}
\tilde{F}:\tilde{\E}\times \widetilde{H}_0\times E_0\times
W^p_{k,\boldsymbol{\mu}}(L) &\rightarrow& \{u\in
W^p_{k-2,\boldsymbol{\mu}-2}(L):\int_Lu\,vol_g=0\}\\
(\tilde{e},\beta,v,f) &\mapsto& F(\tilde{e},\beta+dv+df),
\end{eqnarray*}
invariant under translations in $\R\subset E_0$. As in Proposition
\ref{prop:csnonlinear},
$d\tilde{F}[0](y,\beta,v,f)=d^*\beta+\Delta_g(\chi(y)+v+f)$. Now consider the
restricted map
\begin{equation}\label{eq:csrestrictedlin}
d\tilde{F}[0]:T_e\tilde{\E}\oplus E_0\oplus
W^p_{k,\boldsymbol{\mu}}(L)\rightarrow \{u\in
W^p_{k-2,\boldsymbol{\mu}-2}(L):\int_Lu\,vol_g=0\}.
\end{equation}
We claim that the kernel of this map is given by the constant functions $\R$. To
prove this, assume $d\tilde{F}[0](\chi(y)+v+f)=0$. Since $\chi(y)+v+f\in
W^p_{k,-\boldsymbol{\epsilon}}(L)$, Corollary \ref{cor:laplaceresults} shows
that $\chi(y)+v+f$ is constant, \textit{i.e.} $d(\chi(y)+v+f)=0$. In other words
the infinitesimal Lagrangian deformation of $L$ defined by $(y,v,f)$ is trivial,
so in particular $y=0$. This implies $\chi(y)=0$ and it is simple to conclude
that $f=0$ and $v\in\R$. 

Let $\mathcal{O}$ denote the cokernel of the map of Equation
\ref{eq:csrestrictedlin}. More precisely, we define it to be a
finite-dimensional space of $W^p_{k-2,\mu-2}(L)$ such that 
\begin{equation}
\mathcal{O}\oplus d\tilde{F}[0]\left(T_e\tilde{\E}\oplus E_0\oplus
W^p_{k,\boldsymbol{\mu}}\right)=\{u\in
W^p_{k-2,\boldsymbol{\mu}-2}(L):\int_Lu\,vol_g=0\}.
\end{equation}
Consider the map
\begin{eqnarray*}
G:\mathcal{O}\times \tilde{\E}\times \widetilde{H}_0\times E_0\times
W^p_{k,\boldsymbol{\mu}}(L) &\rightarrow& \{u\in
W^p_{k-2,\boldsymbol{\mu}-2}(L):\int_Lu \,vol_g=0\}\\
(\gamma,\tilde{e},\beta,v,f) &\mapsto& \gamma+\tilde{F}(\tilde{e},\beta,v,f).
\end{eqnarray*}
Again, $G$ is invariant under translations in $\R$. By construction, the
restriction of $dG[0]$ to the space $\mathcal{O}\oplus T_e\tilde{\E}\oplus
E_0\oplus W^p_{k,\boldsymbol{\mu}}$ is surjective with kernel $\R$. We now have
the following information about the map $G$. Firstly,
$\mbox{Ker}(dG[0])=V\oplus\R$, where $V$ is some vector space projecting
isomorphically onto $\widetilde{H}_0$. Secondly, by the Implicit Function
Theorem, the set $G^{-1}(0)$ is smooth and can be locally written as the graph
of a smooth map $\Phi$ defined on the kernel of $dG[0]$, thus on
$\widetilde{H}_0\oplus\R$. 
As in Proposition \ref{prop:fredholmreducestofinitedim} we can conclude that the
projection onto $\widetilde{H}_0\oplus\R$ restricts to a homeomorphism
$\tilde{F}^{-1}(0)\simeq(\pi_{\mathcal{O}}\circ\Phi)^{-1}(0)$. It is simple to
check that $\Phi$ is invariant under translations in $\R$.
Restricting $\Phi$ to $\mathcal{I}:=\widetilde{H}_0$ proves the first
claim.

Now let us further assume that $\boldsymbol{\mu}=2+\epsilon$ and that all
singularities are stable. Here, $\epsilon$ is to be understood as in Remark
\ref{rem:muregularity}; in particular, the moduli space we will obtain is
independent of the particular $\epsilon$ chosen. Recall from Corollary
\ref{cor:laplaceresults} that for $\boldsymbol{\mu}>2-m$ we can compute the
dimension of $\mbox{Coker}(\Delta_g)$ in terms of the number of harmonic
functions on the cones $\mathcal{C}_i$. Recall from Definition
\ref{def:stability} that SL cones always admit a certain number of harmonic
functions. This implies that, for the operator
$\Delta_g:W^p_{k,\boldsymbol{\mu}}(L)\rightarrow
W^p_{k-2,\boldsymbol{\mu}-2}(L)$, 
\begin{equation}
\mbox{dim(Coker$(\Delta_g)$)}\geq d, \ \ \mbox{where } d:=\sum_{i=1}^e
\left(1+2m+m^2-1-\mbox{dim}(G_i)\right).
\end{equation}
The stability condition is equivalent to $\mbox{dim(Coker$(\Delta_g)$)}=d$. This
means that the cokernel of the operator in Equation \ref{eq:cslaplacian} has
dimension $d-1$. Notice that $d$ is also the dimension of the space
$T_e\tilde{\E}\oplus E_0$. Our calculation of the kernel thus implies that the
map $d\tilde{F}[0]$ of Equation \ref{eq:csrestrictedlin} is surjective. Thus
$\mathcal{O}=\{0\}$. We can now apply the Implicit Function Theorem directly to
$\tilde{F}$ to obtain that $\tilde{F}^{-1}(0)$ is smooth, of dimension
$\mbox{dim}(\widetilde{H}_0)+1$. Quotienting by $\R$ and using Equation
\ref{eq:dimexactseq} gives the desired result.
\end{proof}
We call $\mathcal{O}$ the \textit{obstruction space} of the SL deformation
problem. 

\subsubsection*{CS/AC special Lagrangians in $\C^m$} We can now state and prove the main result of this paper.
\begin{theorem} \label{thm:accssl}
Let $L$ be a CS/AC SL submanifold of $\C^m$ with $s$ CS ends, $l$ AC ends
and rate $(\boldsymbol{\mu},\boldsymbol{\lambda})$. Let $\mathcal{M}_L$ denote
the moduli space of SL deformations of $L$ with moving singularities and rate
$(\boldsymbol{\mu},\boldsymbol{\lambda})$. Assume
$(\boldsymbol{\mu},\boldsymbol{\lambda})$ is non-exceptional for the map
\begin{equation}\label{eq:accslaplacian}
\Delta_g:W^p_{k,(\boldsymbol{\mu},\boldsymbol{\lambda})}(L)\rightarrow
W^p_{k-2,(\boldsymbol{\mu}-2,\boldsymbol{\lambda}-2)}(L).
\end{equation}
We will restrict our attention to the two cases $\boldsymbol{\lambda}\in
(2-m,0)$ or $\boldsymbol{\lambda}\in (0,2)$. In either case $\mathcal{M}_L$ is
locally homeomorphic to the zero set of a smooth map 
$\Phi:\mathcal{I}\rightarrow\mathcal{O}$ defined (locally) between
finite-dimensional vector spaces. If furthermore $\boldsymbol{\mu}=2+\epsilon$
and all singularities are stable then $\mathcal{O}=\{0\}$ and $\mathcal{M}_L$ is
smooth of dimension $\mbox{dim}(\mathcal{I})$. Specifically: 
\begin{enumerate}
\item If $\boldsymbol{\lambda}\in (2-m,0)$ then
$\mbox{dim}(\mathcal{I})=b^1_c(L)-s$.
\item If $\boldsymbol{\lambda}\in (0,2)$ then
$\mbox{dim}(\mathcal{I})=b^1_{c,\bullet}(L)-s+\sum_{i=1}^l d_i$,
where $d_i$ is the number of harmonic functions on the AC end $S_i$ of the form
$r^\gamma\sigma(\theta)$ with $\gamma\in [0,\lambda_i]$.
\end{enumerate}
\end{theorem}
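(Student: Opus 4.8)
The plan is to run the three‑step machine of Theorems \ref{thm:mclean}, \ref{thm:pacini} and \ref{thm:joyce}, now fusing the AC and CS ingredients. First I would fix $k\geq 3$ and $p>m$ so that $W^p_{k-1,(\boldsymbol{\mu}-1,\boldsymbol{\lambda}-1)}(\Lambda^1)\hookrightarrow C^1_{(\boldsymbol{\mu}-1,\boldsymbol{\lambda}-1)}(\Lambda^1)$ by Theorem \ref{thm:embedding}, and define $F$ on $\tilde{\E}\times W^p_{k-1,(\boldsymbol{\mu}-1,\boldsymbol{\lambda}-1)}(\Lambda^1)$ as in Equation \ref{eq:csdefF}, so that $F^{-1}(0)$ parametrizes $\mathcal{M}_L$ by Proposition \ref{prop:accsnonlinear}. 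Using Theorem \ref{thm:embedding} (for $\boldsymbol{\lambda}<2$ the target weighted Sobolev space is closed under multiplication) together with the weighted regularity Lemma \ref{l:SLweightedregularity} applied end by end, $F$ is a smooth map of Banach spaces whose zeros are smooth, so $\mathcal{M}_L$ is locally homeomorphic to $F^{-1}(0)$.

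Next I would peel off the Hamiltonian directions using Decomposition \ref{decomp:closedforms_accs}. When $\boldsymbol{\lambda}\in(2-m,0)$ I use Equation \ref{eq:closedforms_decay_accs} to write each closed $\alpha$ as $\beta+dv+df$ with $\beta\in\widetilde{H}_{0,\infty}$, $v\in E_{0,\infty}$, $f\in C^\infty_{(\boldsymbol{\mu},\boldsymbol{\lambda})}(L)$; when $\boldsymbol{\lambda}\in(0,2)$ I use Equation \ref{eq:closedforms_growth_accs}, writing $\alpha=\beta+d(v+f)$ with $\beta\in\widetilde{H}_{0,\bullet}$, $v\in E_0$. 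Passing to the induced map $\tilde{F}$ on the product of $\tilde{\E}$, the relevant cohomology space, $E$, and $W^p_{k,(\boldsymbol{\mu},\boldsymbol{\lambda})}(L)$, Proposition \ref{prop:accsnonlinear} gives $d\tilde{F}[0](y,\beta,v,f)=d^*\beta+\Delta_g(\chi(y)+v+f)$, a finite‑rank perturbation of the weighted Laplacian invariant under the residual $\R$ of constant translations, so that $\mathcal{M}_L\cong\tilde{F}^{-1}(0)/\R$.

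The analytic core is the restricted linearization $P\colon(y,v,f)\mapsto\Delta_g(\chi(y)+v+f)$. Corollary \ref{cor:laplaceresults} controls $\Delta_g\colon W^p_{k,(\boldsymbol{\mu},\boldsymbol{\lambda})}\to W^p_{k-2,(\boldsymbol{\mu}-2,\boldsymbol{\lambda}-2)}$: it is injective, and its cokernel is represented by the homogeneous harmonic functions on the CS cones of growth $0\leq\gamma<\boldsymbol{\mu}$. With $\boldsymbol{\mu}=2+\epsilon$ and all singularities stable (Definition \ref{def:stability}) these are exactly the constants ($\gamma=0$), the $2m$ linear functions ($\gamma=1$) and the $m^2-1-\dim G_i$ quadratic functions ($\gamma=2$) arising from the moment map of $\sunitary m\ltimes\C^m$. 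Thus the end‑indicators $v\in E$ realize the constant classes and the moment‑map functions $\chi(y)$, $y\in T_e\tilde{\E}$, realize the linear and quadratic ones, so $P$ is surjective and $\mathcal{O}=\mathrm{Coker}(P)=\{0\}$; in the general case one keeps $\mathcal{O}$ and applies Proposition \ref{prop:fredholmreducestofinitedim} to produce the finite‑dimensional model $\Phi\colon\mathcal{I}\to\mathcal{O}$. I expect this cokernel bookkeeping to be the main obstacle, for two reasons: one must verify that $\chi(y)$ and $v$ hit linearly independent cokernel classes and exhaust them — the precise content of stability — and, in the growth case $\boldsymbol{\lambda}\in(0,2)$, the cited Corollary is stated only for $\boldsymbol{\lambda}<0$, so one must push it through the change‑of‑index formula across the AC exceptional weights in $[0,\boldsymbol{\lambda}]$, which is exactly what generates the extra harmonic modes $r^\gamma\sigma(\theta)$ counted by $d_i$.

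Finally I would assemble the dimension count. Surjectivity of $P$ gives $\dim\mathrm{Ker}(P)=\mathrm{index}(P)$, obtained from $\mathrm{index}(\Delta_g)$ via Corollary \ref{cor:laplaceresults} plus the added finite‑dimensional domain summands; adjoining the $\beta$‑directions, quotienting by $\R$, and translating the resulting cohomological data through Corollary \ref{cor:topsummary} and Equations \ref{eq:dimexactseq} and \ref{eq:cohomsequence_accsbis}, one finds $\dim\mathcal{M}_L=b^1_c(L)-s$ when $\boldsymbol{\lambda}\in(2-m,0)$ and $\dim\mathcal{M}_L=b^1_{c,\bullet}(L)-s+\sum_{i=1}^l d_i$ when $\boldsymbol{\lambda}\in(0,2)$, the summand $\sum_i d_i$ being precisely the AC growth modes isolated in the third step.
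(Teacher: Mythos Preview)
Your proposal is correct and follows essentially the same route as the paper: reduce to $\tilde F$ via the appropriate clause of Decomposition~\ref{decomp:closedforms_accs}, analyze the restricted linearization $P=\Delta_g(\chi(y)+v+f)$ using Corollary~\ref{cor:laplaceresults} and stability, then extract $\mathcal I$ and $\mathcal O$ via Proposition~\ref{prop:fredholmreducestofinitedim} and compute dimensions with the change-of-index formula. The one place where the paper is sharper than your sketch is the surjectivity of $P$ in the growth case $\boldsymbol{\lambda}\in(0,2)$ under stability: rather than arguing directly that $\chi(y)$ and $v$ hit the cokernel classes (which is delicate since $\Delta_{\boldsymbol{\mu},\boldsymbol{\lambda}}$ is no longer injective and its cokernel is not given by the CS formula), the paper chooses an auxiliary weight $\boldsymbol{\lambda}'\in(2-m,0)$, notes that the restricted map is an isomorphism there, and then uses the inclusion $\mathrm{Coker}(\Delta_{\boldsymbol{\mu},\boldsymbol{\lambda}})\subseteq\mathrm{Coker}(\Delta_{\boldsymbol{\mu},\boldsymbol{\lambda}'})$ to transport surjectivity; the dimension of $\mathcal I$ then drops out as $\dim(\widetilde H_{0,\bullet})+i(\Delta_{\boldsymbol{\mu},\boldsymbol{\lambda}})-i(\Delta_{\boldsymbol{\mu},\boldsymbol{\lambda}'})$, which is exactly the index jump you anticipated.
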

\begin{proof}
Start with a map $F$ defined as in the previous theorems on $\tilde{\E}\times
W^p_{k-1,(\boldsymbol{\mu}-1,\boldsymbol{\lambda}-1)}(\Lambda^1)$, such that
$\mathcal{M}_L\simeq F^{-1}(0)$. Let
$\Delta_{\boldsymbol{\mu},\boldsymbol{\lambda}}$ denote the map of Equation
\ref{eq:accslaplacian}.

We split the proof into two parts, depending on the range of
$\boldsymbol{\lambda}$. To begin, assume $\boldsymbol{\lambda}\in (2-m,0)$. By
regularity and Decomposition \ref{decomp:closedforms_accs}, $\mathcal{M}_L$ is
locally homeomorphic to $\tilde{F}^{-1}(0)/\R$, where
$\tilde{F}$ is the (locally-defined) map
\begin{eqnarray*}
\tilde{F}:\tilde{\E}\times \widetilde{H}_{0,\infty}\times E_{0,\infty}\times
W^p_{k,(\boldsymbol{\mu},\boldsymbol{\lambda})}(L) &\rightarrow&
W^p_{k-2,(\boldsymbol{\mu}-2,\boldsymbol{\lambda}-2)}(L)\\
(\tilde{e},\beta,v,f) &\mapsto& F(\tilde{e},\beta+dv+df).
\end{eqnarray*}
As in Proposition \ref{prop:accsnonlinear},
$d\tilde{F}[0](y,\beta,v,f)=d^*\beta+\Delta_g(\chi(y)+v+f)$. Now consider the
restricted map
\begin{equation}\label{eq:accsrestrictedlin}
d\tilde{F}[0]:T_e\tilde{\E}\oplus E_0\oplus
W^p_{k,(\boldsymbol{\mu},\boldsymbol{\lambda})}(L)\rightarrow
W^p_{k-2,(\boldsymbol{\mu}-2,\boldsymbol{\lambda}-2)}(L),
\end{equation}
where $E_0$ is the subspace of functions in $E_{0,\infty}$ which vanish on the
AC ends.  
Notice that $\chi(y)+v+f\in
W^p_{k,(-\boldsymbol{\epsilon},\boldsymbol{\lambda})}(L)$. As in Theorem
\ref{thm:joyce} we can use Corollary \ref{cor:laplaceresults} to prove that the
map of Equation \ref{eq:accsrestrictedlin} is injective.

Let $\mathcal{O}$ denote the cokernel of the map of Equation
\ref{eq:accsrestrictedlin}. More precisely, we define it to be a
finite-dimensional subspace of
$W^p_{k-2,(\boldsymbol{\mu}-2,\boldsymbol{\lambda}-2)}(L)$ such that 
\begin{equation}
\mathcal{O}\oplus d\tilde{F}[0]\left(T_e\tilde{\E}\oplus E_0\oplus
W^p_{k,(\boldsymbol{\mu},\boldsymbol{\lambda})}\right)=W^p_{k-2,(\boldsymbol{\mu
}-2,\boldsymbol{\lambda}-2)}(L).
\end{equation}
Consider the map
\begin{eqnarray*}
G:\mathcal{O}\times \tilde{\E}\times \widetilde{H}_{0,\infty}\times
E_{0,\infty}\times W^p_{k,(\boldsymbol{\mu},\boldsymbol{\lambda})}(L)
&\rightarrow& W^p_{k-2,(\boldsymbol{\mu}-2,\boldsymbol{\lambda}-2)}(L)\\
(\gamma,\tilde{e},\beta,v,f) &\mapsto& \gamma+\tilde{F}(\tilde{e},\beta,v,f).
\end{eqnarray*}
By construction the restriction of $dG[0]$ to the space $\mathcal{O}\oplus
T_e\tilde{\E}\oplus E_0\oplus W^p_{k,(\boldsymbol{\mu},\boldsymbol{\lambda})}$
is an isomorphism. Let $E'$ denote a complement of $E_0\oplus\R$ in
$E_{0,\infty}$, \textit{i.e.} $E_{0,\infty}=E_0\oplus\R\oplus E'$. As in Theorem
\ref{thm:joyce}, $G^{-1}(0)$ is smooth and can be written as the graph of a
smooth map $\Phi$ defined on $\widetilde{H}_{0,\infty}\oplus(\R\oplus E')$.
Restricting $\Phi$ to $\mathcal{I}:=\widetilde{H}_{0,\infty}\oplus E'$ and using
the same arguments as in Proposition \ref{prop:fredholmreducestofinitedim} and
Theorem \ref{thm:joyce} then proves
the first claim regarding $\mathcal{M}_L$ for this range of
$\boldsymbol{\lambda}$. Notice that
$\mbox{dim}(\widetilde{H}_{0,\infty})=b^1_c(L)-(s+l)+1$ and $\mbox{dim}(E')=l-1$
so $\mbox{dim}(\mathcal{I})=b^1_c(L)-s$.

Now let us further assume that $\boldsymbol{\mu}=2+\epsilon$ and that all
singularities are stable. Here, as in Theorem \ref{thm:joyce}, $\epsilon$ is to
be understood as in Remark \ref{rem:muregularity}. By Corollary
\ref{cor:laplaceresults} and the definition of stability,
\begin{equation}
\mbox{dim(Coker$(\Delta_{\boldsymbol{\mu},\boldsymbol{\lambda}})$)}=d, \ \
\mbox{where } d:=\sum_{i=1}^s \left(1+2m+m^2-1-\mbox{dim}(G_i)\right).
\end{equation}
Again, $d$ is also the dimension of the space $T_e\tilde{\E}\oplus E_0$. Our
previous injectivity calculation thus implies that the map $d\tilde{F}[0]$ of
Equation \ref{eq:accsrestrictedlin} is an isomorphism. In particular,
$\mathcal{O}=\{0\}$. We can now apply the Implicit Function Theorem directly to
$\tilde{F}$ to obtain that $\tilde{F}^{-1}(0)$ is smooth. Quotienting by $\R$
shows that $\mathcal{M}_L$ is smooth.

We now start over again, under the assumption $\boldsymbol{\lambda}\in (0,2)$.
In this case we use the map
\begin{eqnarray*}
\tilde{F}:\tilde{\E}\times \widetilde{H}_{0,\bullet}\times E_0\times
W^p_{k,(\boldsymbol{\mu},\boldsymbol{\lambda})}(L) &\rightarrow&
W^p_{k-2,(\boldsymbol{\mu}-2,\boldsymbol{\lambda}-2)}(L)\\
(\tilde{e},\beta,v,f) &\mapsto& F(\tilde{e},\beta+dv+df)
\end{eqnarray*}
and the restricted map
\begin{equation}\label{eq:accsrestrictedlinbis}
d\tilde{F}[0]:T_e\tilde{\E}\oplus E_0\oplus
W^p_{k,(\boldsymbol{\mu},\boldsymbol{\lambda})}(L)\rightarrow
W^p_{k-2,(\boldsymbol{\mu}-2,\boldsymbol{\lambda}-2)}(L).
\end{equation}
Recall the construction of $E_0$ in Decomposition \ref{decomp:closedforms_accs}:
it is clear that we may assume that $\chi(T_e\tilde{\E})$ and $E_0$ are linearly
independent in $W^p_{k,(-\boldsymbol{\epsilon},-\boldsymbol{\epsilon})}(L)$.
Corollary \ref{cor:laplaceresults} proves that $\Delta_g$ is injective on this
space. Define a decomposition
\begin{equation}
T_e\tilde{\E}\oplus E_0=Z'\oplus Z''
\end{equation}
by imposing $\Delta_g(Z')=\Delta_g(T_e\tilde{\E}\oplus E_0)\cap
\mbox{Im}(\Delta_{\boldsymbol{\mu},\boldsymbol{\lambda}})$ and choosing any
complement $Z''$. Then one can check that the kernel of the map of Equation
\ref{eq:accsrestrictedlinbis} is isomorphic to
$Z'\oplus\mbox{Ker}(\Delta_{\boldsymbol{\mu},\boldsymbol{\lambda}})$.

Choose $\mathcal{O}$ in
$W^p_{k-2,(\boldsymbol{\mu}-2,\boldsymbol{\lambda}-2)}(L)$ such that 
\begin{equation}
\mathcal{O}\oplus d\tilde{F}[0]\left(T_e\tilde{\E}\oplus E_0\oplus
W^p_{k,(\boldsymbol{\mu},\boldsymbol{\lambda})}\right)=W^p_{k-2,(\boldsymbol{\mu
}-2,\boldsymbol{\lambda}-2)}(L).
\end{equation}
Consider the map
\begin{eqnarray*}
G:\mathcal{O}\times \tilde{\E}\times \widetilde{H}_{0,\bullet}\times E_0\times
W^p_{k,(\boldsymbol{\mu},\boldsymbol{\lambda})}(L) &\rightarrow&
W^p_{k-2,(\boldsymbol{\mu}-2,\boldsymbol{\lambda}-2)}(L)\\
(\gamma,\tilde{e},\beta,v,f) &\mapsto& \gamma+\tilde{F}(\tilde{e},\beta,v,f).
\end{eqnarray*}
The restriction of $dG[0]$ to the space $\mathcal{O}\oplus T_e\tilde{\E}\oplus
E_0\oplus W^p_{k,(\boldsymbol{\mu},\boldsymbol{\lambda})}$ is surjective. As
before, this implies that $G^{-1}(0)$ can be parametrised via a smooth map
$\Phi$ defined (locally) on the space $\widetilde{H}_{0,\bullet}\oplus Z'\oplus
\mbox{Ker}(\Delta_{\boldsymbol{\mu},\boldsymbol{\lambda}})$. As usual, these
maps are invariant under translations in $\R\subset
Z'\oplus\mbox{Ker}(\Delta_{\boldsymbol{\mu},\boldsymbol{\lambda}})$. Setting
$\mathcal{I}:=(\widetilde{H}_{0,\bullet}\oplus Z'\oplus
\mbox{Ker}(\Delta_{\boldsymbol{\mu},\boldsymbol{\lambda}}))/\R$ and considering
the natural map on this quotient then proves the first claim regarding
$\mathcal{M}_L$ for this range of $\boldsymbol{\lambda}$.

Now assume that $\boldsymbol{\mu}=2+\epsilon$ and that all singularities are
stable. Choose $\boldsymbol{\lambda}'\in (2-m,0)$. We can restrict the map of
Equation \ref{eq:accsrestrictedlinbis} to the map
\begin{equation}\label{eq:accsrestrictedlinter}
d\tilde{F}[0]:T_e\tilde{\E}\oplus E_0\oplus
W^p_{k,(\boldsymbol{\mu},\boldsymbol{\lambda}')}(L)\rightarrow
W^p_{k-2,(\boldsymbol{\mu}-2,\boldsymbol{\lambda}'-2)}(L).
\end{equation}
Exactly as for Equation \ref{eq:accsrestrictedlin}, it is simple to prove that
Equation \ref{eq:accsrestrictedlinter} defines an isomorphism and that
$\mbox{dim}(T_e\tilde{\E}\oplus
E_0)=\mbox{dim(Coker($\Delta_{\boldsymbol{\mu},\boldsymbol{\lambda}'}$))}$,
where 
\begin{equation*}
\Delta_{\boldsymbol{\mu},\boldsymbol{\lambda}'}:=\Delta_g:
W^p_{k,(\boldsymbol{\mu},\boldsymbol{\lambda}')}(L)\rightarrow
W^p_{k-2,(\boldsymbol{\mu}-2,\boldsymbol{\lambda}'-2)}(L).
\end{equation*}
One can check that the dimension of
$\mbox{Coker}(\Delta_{\boldsymbol{\mu},\boldsymbol{\lambda}})$ decreases as
$\boldsymbol{\lambda}$ increases. We can actually assume, cf.
\cite{pacini:weighted}, that
$\mbox{Coker}(\Delta_{\boldsymbol{\mu},\boldsymbol{\lambda}})\subseteq\mbox{
Coker}(\Delta_{\boldsymbol{\mu},\boldsymbol{\lambda}'})$. This proves that the
map of Equation \ref{eq:accsrestrictedlinbis} is surjective, \textit{i.e.}
$\mathcal{O}=\{0\}$, so $\tilde{F}^{-1}(0)$ and $\mathcal{M}_L$ are smooth. To
compute the dimension of this moduli space notice that $Z''\simeq
\mbox{Coker}(\Delta_{\boldsymbol{\mu},\boldsymbol{\lambda}})$ so
\begin{eqnarray}
\mbox{dim(Ker$(d\tilde{F}[0])$)} &=&
\mbox{dim(Ker$(\Delta_{\boldsymbol{\mu},\boldsymbol{\lambda}})$)}
+\mbox{dim}(Z')\nonumber\\
&=&
\mbox{dim(Ker$(\Delta_{\boldsymbol{\mu},\boldsymbol{\lambda}})$)}+\mbox{
dim(Coker($\Delta_{\boldsymbol{\mu},\boldsymbol{\lambda}'}$))}-\mbox{
dim(Coker($\Delta_{\boldsymbol{\mu},\boldsymbol{\lambda}}$))}\nonumber\\
&=&
i(\Delta_{\boldsymbol{\mu},\boldsymbol{\lambda}})-i(\Delta_{\boldsymbol{\mu},
\boldsymbol{\lambda}'}),
\end{eqnarray}
where $i$ denotes the index of the Fredholm map. This implies that the kernel of
the full map $d\tilde{F}[0]$ has dimension
$\mbox{dim}(\widetilde{H}_{0,\bullet})+i(\Delta_{\boldsymbol{\mu},\boldsymbol{
\lambda}})-i(\Delta_{\boldsymbol{\mu},\boldsymbol{\lambda}'})$. The conclusion
follows from Equation \ref{eq:cohomsequence_accsbis} and the change of index
formula, cf. \cite{pacini:weighted}.
\end{proof}

\begin{remark}\label{rem:coh_dim}
Notice that, when $\boldsymbol{\lambda}<0$ and the stability condition is
verified, the dimension of the SL moduli spaces appearing in Theorems
\ref{thm:mclean}, \ref{thm:pacini}, \ref{thm:joyce} and \ref{thm:accssl} is
purely topological. The cases analyzed in the theorems correspond exactly to the
cases analyzed in Corollary \ref{cor:topsummary}, in the sense that the moduli
spaces should be thought of as being modelled on the cohomology spaces which
appear in Corollary \ref{cor:topsummary}. 

It is interesting to notice how decay conditions on AC and CS ends are
incorporated differently into these cohomology spaces: decay conditions on AC
ends correspond to using compactly-supported forms while decay conditions on CS
ends correspond to the condition that a certain restriction map vanishes, cf. also Remark \ref{rem:topsummary}.

Allowing $\boldsymbol{\lambda}>0$ changes the topological data, again in
agreement with Corollary \ref{cor:topsummary}. It also introduces new SL
deformations which depend on analytic data.
\end{remark}

\begin{example}
Let $\mathcal{C}$ be a SL cone in $\C^m$. Assume $\mathcal{C}$ is stable and
that its link $\Sigma$ is connected so that $s=1$. Using Poincar\'{e} Duality
and the fact that $\mathcal{C}\simeq\Sigma\times (0,\infty)$ we see that 
\begin{equation}
b^1_c(\mathcal{C})=b^{m-1}(\mathcal{C})=b^{m-1}(\Sigma)=1.
\end{equation}
Theorem \ref{thm:accssl} then shows that, for $\lambda\in (2-m,0)$,
$\mathcal{M}_{\mathcal{C}}$ has dimension 0, \textit{i.e.} $\mathcal{C}$ is
rigid within this class of deformations.

Notice also that restriction defines isomorphisms $H^i(\mathcal{C};\R)\simeq
H^i(\Sigma;\R)$ so the long exact sequence \ref{eq:cohomsequence_accsbis}, using
$\Sigma_0=\Sigma$, leads to $H^i_{c,\bullet}(\mathcal{C};\R)=0$. 
Theorem \ref{thm:accssl} then shows that $\mathcal{M}_{\mathcal{C}}$ has
dimension 0 if $\lambda\in (0,1)$ and has dimension $2m$ if $\lambda\in (1,2)$.
In the latter case the SL deformations are simply the translations of
$\mathcal{C}$ in $\C^m$.
\end{example}

%%%%%%%%%%%%%%%%%%%%%%%%%%%%%%%%%%%%%%%%%%%%%%%%%%%%%%%%%%%%%
%%%%%%%%%%%%%%%%%%%%%%%%%%%%%%%%%%%%%%%%%%%%%%%%%%%%%%%%%%%%%

\ 

{\bf Acknowledgments.} I would like to thank D. Joyce for many useful
explanations on his work and for his help and advice on various parts of this
paper. I would also like to thank S. Karigiannis and Y. Song for useful conversations. The main part of this 
work was carried out while I was a Marie Curie EIF Fellow at the University of
Oxford.

\bibliographystyle{amsplain}
\bibliography{accssldefs}

\providecommand{\bysame}{\leavevmode\hbox to3em{\hrulefill}\thinspace}
\providecommand{\MR}{\relax\ifhmode\unskip\space\fi MR }
% \MRhref is called by the amsart/book/proc definition of \MR.
\providecommand{\MRhref}[2]{%
  \href{http://www.ams.org/mathscinet-getitem?mr=#1}{#2}
}
\providecommand{\href}[2]{#2}
\begin{thebibliography}{10}

\bibitem{audin}
Mich{\`e}le Audin, \emph{Cobordismes d'immersions lagrangiennes et
  legendriennes}, Travaux en Cours [Works in Progress], vol.~20, Hermann,
  Paris, 1987.

\bibitem{botttu}
Raoul Bott and Loring~W. Tu, \emph{Differential forms in algebraic topology},
  Graduate Texts in Mathematics, vol.~82, Springer-Verlag, New York, 1982.

\bibitem{harveylawson}
Reese Harvey and H.~Blaine Lawson, Jr., \emph{Calibrated geometries}, Acta
  Math. \textbf{148} (1982), 47--157.

\bibitem{haskins}
Mark Haskins, \emph{Special {L}agrangian cones}, Amer. J. Math. \textbf{126}
  (2004), no.~4, 845--871.

\bibitem{haskinskapouleas}
Mark Haskins and Nikolaos Kapouleas, \emph{Special {L}agrangian cones with
  higher genus links}, Invent. Math. \textbf{167} (2007), no.~2, 223--294.

\bibitem{haskinspacini}
Mark Haskins and Tommaso Pacini, \emph{Obstructions to special {L}agrangian
  desingularizations and the {L}agrangian prescribed boundary problem}, Geom.
  Topol. \textbf{10} (2006), 1453--1521 (electronic).

\bibitem{joyce:3spheres}
Dominic Joyce, \emph{On counting special {L}agrangian homology 3-spheres},
  Topology and geometry: commemorating {SISTAG}, Contemp. Math., vol. 314,
  Amer. Math. Soc., Providence, RI, 2002, pp.~125--151.

\bibitem{joyce:symmetries}
\bysame, \emph{Special {L}agrangian {$m$}-folds in {$\Bbb C\sp m$} with
  symmetries}, Duke Math. J. \textbf{115} (2002), no.~1, 1--51.

\bibitem{joyce:V}
\bysame, \emph{Special {L}agrangian submanifolds with isolated conical
  singularities. {V}. {S}urvey and applications}, J. Differential Geom.
  \textbf{63} (2003), no.~2, 279--347.

\bibitem{joyce:I}
\bysame, \emph{Special {L}agrangian submanifolds with isolated conical
  singularities. {I}. {R}egularity}, Ann. Global Anal. Geom. \textbf{25}
  (2004), no.~3, 201--251.

\bibitem{joyce:II}
\bysame, \emph{Special {L}agrangian submanifolds with isolated conical
  singularities. {II}. {M}oduli spaces}, Ann. Global Anal. Geom. \textbf{25}
  (2004), no.~4, 301--352.

\bibitem{joyce:III}
\bysame, \emph{Special {L}agrangian submanifolds with isolated conical
  singularities. {III}. {D}esingularization, the unobstructed case}, Ann.
  Global Anal. Geom. \textbf{26} (2004), no.~1, 1--58.

\bibitem{joyce:IV}
\bysame, \emph{Special {L}agrangian submanifolds with isolated conical
  singularities. {IV}. {D}esingularization, obstructions and families}, Ann.
  Global Anal. Geom. \textbf{26} (2004), no.~2, 117--174.

\bibitem{kontsevich}
Maxim Kontsevich, \emph{Homological algebra of mirror symmetry}, Proceedings of
  the {I}nternational {C}ongress of {M}athematicians, {V}ol.\ 1, 2 ({Z}\"urich,
  1994) (Basel), Birkh\"auser, 1995, pp.~120--139.

\bibitem{marshall}
Stephen Marshall, \emph{Deformations of special {L}agrangian submanifolds}, D.
  Phil., Oxford University (2002).

\bibitem{mclean}
Robert~C. McLean, \emph{Deformations of calibrated submanifolds}, Comm. Anal.
  Geom. \textbf{6} (1998), no.~4, 705--747.

\bibitem{ohnita}
Yoshihiro Ohnita, \emph{Stability and rigidity of special {L}agrangian cones
  over certain minimal {L}egendrian orbits}, Osaka J. Math. \textbf{44} (2007),
  no.~2, 305--334.

\bibitem{pacini:defs}
Tommaso Pacini, \emph{Deformations of asymptotically conical special
  {L}agrangian submanifolds}, Pacific J. Math. \textbf{215} (2004), no.~1,
  151--181.

\bibitem{pacini:weighted}
\bysame, \emph{Desingularizing isolated conical singularities: Uniform
  estimates via weighted {S}obolev spaces}, Comm. Anal. Geom. (to appear).

\bibitem{pacini:sldefs}
\bysame, \emph{Special {L}agrangian conifolds, {I}: {M}oduli spaces}, Proc. LMS
  (to appear).

\bibitem{pacini:slgluing}
\bysame, \emph{Special {L}agrangian conifolds, {II}: {G}luing constructions in
  {$\C^m$}}, Proc. LMS (to appear).

\bibitem{souriau}
Jean-Marie Souriau, \emph{G\'eom\'etrie symplectique diff\'erentielle.
  {A}pplications}, G\'eom\'etrie diff\'erentielle. {C}olloques {I}nternationaux
  du {C}entre {N}ational de la {R}echerche {S}cientifique, {S}trasbourg, 1953,
  Centre National de la Recherche Scientifique, Paris, 1953, pp.~53--59.

\bibitem{syz}
Andrew Strominger, Shing-Tung Yau, and Eric Zaslow, \emph{Mirror symmetry is
  {$T$}-duality}, Nuclear Phys. B \textbf{479} (1996), no.~1-2, 243--259.

\bibitem{weinstein}
Alan Weinstein, \emph{Symplectic manifolds and their {L}agrangian
  submanifolds}, Advances in Math. \textbf{6} (1971), 329--346 (1971).

\end{thebibliography}
\end{document}